\newcommand\Q{\mathbb Q}
\newcommand\Z{\mathbb Z}
\newcommand\V{\mathcal V}
\newcommand\W{\mathcal W}
\newcommand\F{\mathcal F}
\newcommand\RT{\mathcal RT}
\newcommand{\GM}{\mathbb C^*}
\newcommand\C{\mathbb C}
\newcommand\tw{\operatorname{tw}}
\newcommand\ev{\operatorname{ev}}
\newcommand\bg{\mathbf g}
\newcommand\bd{\mathbf d}
\newcommand{\Etw}{{E,\mathrm{tw}}}
\newcommand\Aut{\operatorname{Aut}}
\newcommand\PP{\mathbb P}
\newcommand\bM{\overline{\mathcal{M}}}
\newcommand{\vir}{{\operatorname{vir}}}
\newcommand{\Cont}{{\operatorname{Cont}}}
\newcommand{\GW}{{\operatorname{GW}}}
\newcommand{\Supp}{{\operatorname{Supp}}}
\newcommand{\tabincell}[2]{\begin{tabular}{@{}#1@{}}#2\end{tabular}} 
\newcommand{\gr}{{\mathrm{gr}}}
\newcommand{\GMR}{\mathsf{GMR}}
\newcommand{\BC}{{\mathbb{C}}}
\newcommand{\BD}{{\mathbb{D}}}
\newcommand{\BF}{{\mathbb{F}}}
\newcommand{\BQ}{{\mathbb{Q}}}
\newcommand{\BP}{{\mathbb{P}}}
\newcommand{\BZ}{{\mathbb{Z}}}
\newcommand{\ch}{\textup{ch}}
\newcommand\Ext{\operatorname{Ext}}
\newcommand\Hom{\operatorname{Hom}}
\newcommand{\CV}{\mathcal V}
\newcommand{\FM}{\mathfrak M}
\newtheorem{thm}{Theorem}[section]
\newtheorem{prop}[thm]{Proposition}
\newtheorem{conj}[thm]{Conjecture}
\newtheorem{lem}[thm]{Lemma}
\newtheorem{cor}[thm]{Corollary}
\newtheorem{rmk}[thm]{Remark}
\let\@wraptoccontribs\wraptoccontribs
\begin{document}

\title[Poincar\'e polynomials of moduli of one-dimensional sheaves on $\PP^2$]{Poincar\'e polynomials of moduli spaces of one-dimensional sheaves on the projective plane}

\author{Shuai Guo}
\address{School of Mathematical Sciences, Peking University}
\email{guoshuai@math.pku.edu.cn}

\author{Longting Wu}
\address{Department of Mathematics \& SUSTech International Center for Mathematics, Southern University of Science and Technology}
\email{wult@sustech.edu.cn}

\contrib[with an appendix by]{Miguel Moreira}
\address{Department of Mathematics, Massachusetts Institute of Technology}
\email{miguel@mit.edu}

\date{}

\maketitle

\begin{abstract}
Let $M_{\beta}$ denote the moduli space of stable one-dimensional sheaves on a del Pezzo surface $S$, supported on curves of class $\beta$ with Euler characteristic one. We show that the divisibility property of the Poincaré polynomial of $M_{\beta}$, proposed by
Choi-van Garrel-Katz-Takahashi follows from Bousseau's conjectural refined sheaves/Gromov-Witten correspondence. Since this correspondence is known for $S=\mathbb{P}^2$, our result proves Choi-van Garrel-Katz-Takahashi's conjecture in this case.

For $S=\mathbb{P}^2$, our proof also introduces a novel approach to computing the Poincar\'e polynomials using Gromov-Witten invariants of local $\mathbb{P}^2$ and a local elliptic curve.
Specifically, we compute the Poincar\'e polynomials of $M_{d}$ with degrees $d\leq 16$ and derive a closed formula for the leading Betti numbers $b_i(M_d)$ with $d\geq 6$ and $i\leq 4d-22$. We also propose a conjectural formula for the leading Betti numbers $b_i(M_d)$ with $d\geq 4$ and $i\leq 6d-20$. In the Appendix (by M. Moreira), a more general conjecture concerning the higher range Betti numbers of $M_{d}$ is presented, along with another conjecture that involves refinements from the perverse/Chern filtration.
\end{abstract}

\tableofcontents

\section{Introduction}
\subsection{Background} We work over the complex numbers $\C$.

The moduli space $M_{\beta,\chi}$ of
of one-dimensional Gieseker semistable sheaves $\F$ on a del Pezzo surface $S$, with $[\Supp(\F)]=\beta$ and Euler characteristic $\chi$, has attracted significant attention, as seen in works such as \cite{LeP93,Y14,Y15,Y18,Y23,Y24,CC15,CC16,CvGKT,Bou22,Bou23, MS23,PS23,KPS23,LMP24,klmp,PSSZ24}.
In this paper, we study
the Poincar\'e polynomial of the intersection cohomology of $M_{\beta,\chi}$.

By the work of Maulik-Shen \cite{MS23} and Yuan \cite{Y23}, we know that the Poincar\'e polynomial of the intersection cohomology of $M_{\beta,\chi}$ are independent of $\chi$. So we could simply set $\chi=1$ and denote the corresponding moduli space as $M_{\beta}$. $M_{\beta}$
is smooth with 
\[\dim_{\C} M_{\beta}=\beta^2+1.\]
We use 
$$P_{\beta}\left(y\right)\coloneqq\sum_{j=0}^{\operatorname{dim}_{\C} M_{\beta}} b_{2j}\left(M_{\beta}\right) y^{j}$$
to denote the Poincar\'e polynomial of $M_{\beta}$. Note that the odd Betti numbers of $M_{\beta}$ vanish by \cite{B95,M09}.
The Poincar\'e polynomials $P_{\beta}\left(y\right)$ were conjectured to have the following divisibility property:
\begin{conj}[Choi-van Garrel-Katz-Takahashi \cite{CvGKT}]\label{conj:main0}
Let $w=-K_S\cdot\beta$. $P_{\beta}\left(y\right)$ can be divided by the Poincar\'e polynomial of the projective space $\PP^{\omega-1}$, i.e.,
\[\frac{P_{\beta}\left(y\right)}{\sum_{j=0}^{\omega-1}  y^j}\]
is a palindromic polynomial of $y$.
\end{conj}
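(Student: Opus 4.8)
My plan is to first observe that the palindromy half of the conjecture comes for free, so that only divisibility must be proved. Since $M_\beta$ is smooth and projective of complex dimension $\beta^2+1$ with vanishing odd cohomology, Poincar\'e duality gives $b_{2j}(M_\beta)=b_{2(\beta^2+1-j)}(M_\beta)$, i.e.\ $P_\beta(y)=y^{\beta^2+1}P_\beta(1/y)$, so $P_\beta$ is itself palindromic; moreover $[w]_y:=\sum_{j=0}^{w-1}y^j=(y^w-1)/(y-1)$ is palindromic and primitive with nonzero constant term. Hence if $[w]_y$ divides $P_\beta$ in $\Q[y]$, the quotient lies in $\Z[y]$ by Gauss's lemma and, as a ratio of palindromic polynomials in the domain $\Z[y]$, is palindromic. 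So Conjecture~\ref{conj:main0} is \emph{equivalent} to the vanishing $P_\beta(\zeta)=0$ at every root of unity $\zeta\neq1$ with $\zeta^w=1$ (equivalently, $\Phi_k(y)\mid P_\beta(y)$ for every divisor $k>1$ of $w$). This is what I would target.

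\textbf{Step 2: transport the problem to Gromov-Witten theory of $K_S$.} Next I would rewrite $P_\beta(y)$ enumeratively. By the $\chi$-independence and support theorems of Maulik-Shen and Yuan, $H^*(M_\beta)$ carries a canonical perverse (equivalently Chern) grading, and Bousseau's conjectural refined sheaves/Gromov-Witten correspondence identifies the associated bigraded Poincar\'e series with the refined Gopakumar-Vafa invariant of the local Calabi-Yau threefold $K_S$ in class $\beta$. Collapsing the perverse grading and using the vanishing of odd cohomology, the left side of that identity becomes $P_\beta(y)$; thus, up to an explicit monomial throughout, $P_\beta(y)$ equals the one-variable refined local Gromov-Witten/GV invariant $N^{K_S}_\beta(y)\in\Z[y^{1/2},y^{-1/2}]$ of $K_S$ in class $\beta$. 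For $S=\PP^2$ this identity is a theorem, via Bousseau's quantum scattering diagram for $\PP^2$, hence unconditional; for a general del Pezzo surface it is exactly the hypothesis of our main result.

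\textbf{Step 3: a refined log-local factorization, and conclusion.} The genus-zero log-local principle of van Garrel-Graber-Ruddat gives $N^{K_S}_{0,\beta}=(-1)^{w-1}\,w\,N^{\log}_{0,\beta}$ for $E\in|-K_S|$ smooth. I would use its refined, all-genus enhancement: one degenerates $K_S$ into the relative geometry of $K_S$ along $E$ glued to the local Calabi-Yau threefold carved out by a tubular neighborhood of $E$ — for $S=\PP^2$ this is the ``local elliptic curve'' $\mathcal O_E(3)\oplus\mathcal O_E(-3)\to E$ — and the degeneration formula shows that the contribution of the $E$-component is governed by the explicitly computable Gromov-Witten theory of a local curve, which produces precisely the quantum integer $[w]_y$:
\[
N^{K_S}_\beta(y)\;=\;\pm\,y^{a}\,[w]_y\cdot N^{\log}_\beta(y),\qquad N^{\log}_\beta(y)\in\Z[y^{1/2},y^{-1/2}],
\]
the refined log invariant $N^{\log}_\beta$ being a genuine Laurent polynomial because log BPS invariants in class $\beta$ vanish above the arithmetic genus of $\beta$. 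Combining with Step 2, $P_\beta(\zeta)=\pm\,\zeta^{a}\,[w]_y(\zeta)\,N^{\log}_\beta(\zeta)=0$ at every $w$-th root of unity $\zeta\neq1$ — legitimate because $N^{\log}_\beta$, a Laurent polynomial, has no pole at $\zeta$, while $[w]_y(\zeta)=(\zeta^w-1)/(\zeta-1)=0$ — and by Step 1 this is Conjecture~\ref{conj:main0}. For $S=\PP^2$ the refined log-local input is again accessible from the quantum scattering diagram / quantum tropical vertex, so there the argument is unconditional.

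The main obstacle is Step 3: making the refined log-local correspondence precise enough to read off the factor $[w]_y$ at the level of Laurent polynomials (not merely of formal series in the genus parameter), to identify the log side with the Gromov-Witten theory of the local elliptic curve, and to do all of this compatibly with the perverse-grading specialization of Step 2 and with Bousseau's sheaves/GW correspondence. A secondary, largely bookkeeping point — which is nonetheless easy to get wrong — is tracking the half-integer powers of $y$ and the signs through the chain of identities; for the divisibility statement, though, the resulting overall monomial is irrelevant.
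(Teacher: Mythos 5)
Your Step 1 is correct and matches the paper's endgame (integrality via primitivity of the quantum integer, palindromy as a ratio of palindromic polynomials), and your overall skeleton --- pass to Gromov--Witten theory via the sheaves/GW correspondence, then extract the quantum integer from a log-local statement --- is indeed the strategy of the paper. However, Step 2 misstates Bousseau's correspondence: Conjecture \ref{conj:rsh/GW} identifies the generating series built from the $\Omega_\beta$ with the \emph{maximal-contact relative} (log) invariants $N^{S/E}_{g,\beta}$ of the pair $(S,E)$, not with the local invariants of $K_S$. The relation between $P_\beta(y)$ and the refined local theory of $K_S$ is exactly the ``surprising relation between two specializations of $P^{\mathsf{ref}}_d$'' discussed in Appendix \ref{sec:higherrange}, and cannot be quoted as an input. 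Relatedly, your genus-zero log-local formula is written backwards: it is $N^{S/E}_{0,\beta}=(-1)^{w-1}\,w\,N^{K_S}_{0,\beta}$, so the quantum integer sits on the log (= sheaf) side. Your two misidentifications happen to cancel so that the target identity $P_\beta=\pm y^a[w]_y\cdot(\text{Laurent polynomial})$ is still the right one, but the intermediate objects are wrong.

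The more serious gap is Step 3. The assertion that degenerating $K_S$ to the normal cone of $E$ shows that the $E$-component contributes ``precisely the quantum integer'' $[w]_y$, with a Laurent-polynomial quotient $N^{\log}_\beta(y)$, is essentially a restatement of what must be proved, and the naive version fails. The all-genus degeneration produces, besides the leading term $\frac{w\,\V^{S/E}_{\beta}(\hbar)}{2\sin\frac{w\hbar}{2}}$ (which is where $[w]_y$ comes from), an infinite family of correction terms indexed by rooted trees with positive degree over $E$ (Proposition \ref{prop:local/rel} and Corollary \ref{cor:rel/local}). Proving that each such correction is a Laurent polynomial in $y$ is the technical heart of the argument: it requires the explicit stationary TQFT formula for local elliptic curves (Theorem \ref{thm:keythm}, extending Bryan--Pandharipande), the cancellation of the poles of the $\W^{K_S}$ factors at square vertices against the edge factors $\big(y^{\frac{d_2(e)\cdot E}{2}}-y^{-\frac{d_2(e)\cdot E}{2}}\big)^2$ (Lemma \ref{lem:key}), the Castelnuovo vanishing $n_{g,\beta}=0$ for $g>g(\beta)$, and an induction over the multiple-cover terms $k>1$ in \eqref{eqn:sheaf/GW}, which your sketch does not address at all. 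Without these ingredients the claim that the log generating series is $[w]_y$ times a Laurent polynomial in $y$ --- rather than merely a formal power series in $\hbar$ --- has no justification, so the evaluation at roots of unity in your conclusion is not legitimate.
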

Here and below, we use $K_S$ to denote both the canonical divisor and the associated line bundle, depending on the context.

Conjecture \ref{conj:main0} is quite mysterious from the geometry point of view since $M_{\beta}$ is not expected to be a $\PP^{\omega-1}$-bundle in general. The above conjecture was proven in \cite{CvGKT} when arithmetic genus $p_a(\beta)$ is at most 2 together with a technical assumption\footnote{The technical assumption is that $\beta$ is either a line class, a conic class or a nef and big class. Actually Conjecture \ref{conj:main0} was originally stated in \cite{CvGKT} with this technical assumption.
After a private communication with Michel van Garrel and Jinwon Choi, we realize that this condition might not be necessary. So we remove this technical assumption from Conjecture \ref{conj:main0}.} on the curve class $\beta$. 
If we set $y=1$, a direct corollary from Conjecture \ref{conj:main0} is that the Euler characteristic $\chi(M_{\beta})$ should be divided by $w$. The divisibility of  $\chi(M_{\beta})$ was proven by Bousseau in the case of $S=\PP^2$ using a new sheaves/Gromov-Witten correspondence \cite{Bou23}.
A heuristic explanation from the perspective of physics was also provided in \cite[Section 1.5.1]{Bou23} to support Conjecture \ref{conj:main0}.

\subsection{Results overview} In this paper, we first want to 
show that Conjecture \ref{conj:main0} can be deduced from the refined sheaves/Gromov-Witten correspondence proposed by Bousseau \cite[Conjecture 8.16]{Bou20}:


Let
$$\Omega_{\beta}\left(y^{\frac{1}{2}}\right)\coloneqq y^{-\frac{1}{2}\operatorname{dim}_{\C} M_{\beta}}\sum_{j=0}^{\operatorname{dim}_{\C} M_{\beta}} b_{2j}\left(M_{\beta}\right) y^{j}$$
be the shifted Poincar\'e polynomial of $M_{\beta}$. Let 
$$\bar{F}^{N S}\coloneqq\sqrt{-1} \sum_{\beta>0} \sum_{k \in \mathbb{Z}_{>0}} \frac{1}{k^2} \frac{(-1)^{\beta^2+1}\Omega_\beta\left(y^{\frac{k}{2}}\right)}{y^{\frac{k}{2}}-y^{-\frac{k}{2}}} Q^{k \beta}$$
and set
\[\bar{F}^{S / E}\coloneqq\sum_{\beta>0}{(-1)^{\beta\cdot E-1}}  \sum_{g \geq 0}  \frac{N_{g,\beta}^{S/E}}{\beta\cdot E} \hbar^{2g-1} Q^{\beta}\]
to be the generating series of the maximal contact Gromov-Witten invariants of the pair $(S,E)$. See Section \ref{Section:loc/rel} for a precise definition of $\bar{F}^{S / E}$. The refined sheaves/Gromov-Witten correspondence proposed by Bousseau states that
\begin{conj}[Bousseau \cite{Bou20}]\label{conj:rsh/GW}
Under the change of variables $y=e^{i\hbar}$,
\[\bar{F}^{N S}=\bar{F}^{S/E}.\]
\end{conj}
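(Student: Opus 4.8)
The plan is to establish Conjecture~\ref{conj:rsh/GW} for $S=\PP^2$, the case needed here (the same strategy would apply to a general del Pezzo $S$, but several inputs below are only conjectural there), by showing that after the substitution $y=e^{i\hbar}$ both sides repackage the same refined BPS-type invariants of local $\PP^2$ through a Nekrasov--Shatashvili denominator. As a preliminary consistency check, for $\beta=dH$ one has $\beta^2+1\equiv 3d-1=\beta\cdot E-1\pmod 2$, so the two competing signs $(-1)^{\beta^2+1}$ and $(-1)^{\beta\cdot E-1}$ already agree; the content is the equality of the resulting $\hbar$-series.

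For the left-hand side, I would interpret $(-1)^{\beta^2+1}\Omega_\beta(y^{1/2})$ as the Nekrasov--Shatashvili specialization of the refined Gopakumar--Vafa invariant of the toric Calabi--Yau threefold $K_{\PP^2}$ in class $\beta=dH$. This is legitimate because $K_{\PP^2}$ is toric, so the Gromov--Witten/stable-pairs correspondence holds for it; because the moduli of stable one-dimensional sheaves on $K_{\PP^2}$ reduces, by $\C^*$-equivariant localization, to the smooth variety $M_d$, so the perverse sheaf computing the Maulik--Toda refined BPS invariant is the (shifted) constant sheaf and the invariant is read off from $H^*(M_d)$ with its perverse filtration for the support map $M_d\to|\mathcal{O}_{\PP^2}(d)|$; and because, by the $\chi$-independence and perverse-filtration results of Maulik--Shen and Yuan recalled above, in the Nekrasov--Shatashvili limit this data collapses to the total cohomology, i.e.\ to $\Omega_\beta(y^{1/2})$. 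Feeding this into the refined Gopakumar--Vafa multiple-cover formula then reproduces $\bar F^{NS}$ verbatim, the factors $1/k^2$ and $1/(y^{k/2}-y^{-k/2})$ being exactly the NS specialization of the refined denominators.

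For the right-hand side, I would run the local-relative (``log-local'') analysis of the pair $(\PP^2,E)$ with $E$ a smooth anticanonical cubic. In genus $0$ this is van Garrel--Graber--Ruddat, identifying the maximal-contact invariants with the genus-$0$ local invariants of $K_{\PP^2}$ up to the explicit factor involving $\beta\cdot E$ and the sign $(-1)^{\beta\cdot E-1}$ already built into $\bar F^{S/E}$. In higher genus the relevant input is Bousseau's scattering-diagram description of the maximal-contact invariants of $(\PP^2,E)$ --- the higher-genus, refined analogue of N.~Takahashi-type BPS-structure results --- which expresses $\bar F^{S/E}$ directly in terms of the sheaf-counting data of $\PP^2$, in the Nekrasov--Shatashvili package; in practice $\bar F^{S/E}$ is accessible because the formal neighborhood of $E$ in $\PP^2$ is a local elliptic curve, so a degeneration splits it into contributions of local $\PP^2$ and of this local elliptic curve. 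Matching this output against the left-hand side is then a bookkeeping of conventions: the substitution $y=e^{i\hbar}$, the half-integer shift built into $\Omega_\beta$, and the two sign factors.

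The genuine obstacle I expect is this bridge on the right-hand side --- the higher-genus local-relative correspondence for $(\PP^2,E)$ with the precise $1/(\beta\cdot E)$ normalization, and the fact, special to the relative geometry, that $(\PP^2,E)$ ``sees'' the Nekrasov--Shatashvili limit of the refined invariants rather than the unrefined topological string, neither of which is formal and which together form the core of Bousseau's work. A second non-trivial input is the use of the Maulik--Toda refined BPS formalism for the non-compact threefold $K_{\PP^2}$ as a theorem rather than a conjecture, resting on smoothness of $M_d$, on $\chi$-independence, and on the (refined) Gromov--Witten/stable-pairs correspondence for $K_{\PP^2}$. A prudent first move, which simultaneously pins down the normalization constants and catches sign errors, is to expand both sides to low order in $Q$ using the known Poincar\'e polynomials $P_d(y)$ for small $d$ and the genus-$0$ and genus-$1$ local invariants of $\PP^2$ before attempting the structural argument.
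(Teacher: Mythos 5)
This statement is a \emph{conjecture} in the paper, not a theorem: the authors do not prove it. For general del Pezzo $S$ it is open, and for $S=\PP^2$ the paper simply cites the combination of Bousseau's work \cite{Bou23} (which establishes the correspondence conditionally on $\chi$-independence) with the $\chi$-independence theorems of Maulik--Shen \cite{MS23} and Yuan \cite{Y23}. So there is no in-paper proof to compare against, and your text should be judged as a standalone proof attempt.

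As such, it is a roadmap rather than a proof. Every load-bearing step is an invocation of an external result that is itself the substance of the claim: the identification of $(-1)^{\beta^2+1}\Omega_\beta(y^{1/2})$ with a refined BPS index of $K_{\PP^2}$, the higher-genus local/relative correspondence for $(\PP^2,E)$ with the precise $1/(\beta\cdot E)$ normalization, and above all the assertion that the relative geometry ``sees'' the Nekrasov--Shatashvili specialization --- which you yourself flag as ``the core of Bousseau's work.'' Acknowledging that the hard part is hard does not discharge it; nothing in the proposal reduces the conjecture to anything simpler than the theorems being cited. The minor checks you do carry out (the sign parity $\beta^2+1\equiv\beta\cdot E-1\pmod 2$, low-degree numerics) are fine but peripheral. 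There is also a substantive imprecision in your treatment of the left-hand side: you route the definition of $\Omega_\beta$ through the Maulik--Toda perverse filtration and claim that ``in the Nekrasov--Shatashvili limit this data collapses to the total cohomology.'' As the paper's Appendix makes explicit, the $t=1$ specialization of the perverse-refined polynomial gives the Gopakumar--Vafa series $\mathcal F_d$, while it is the \emph{diagonal} specialization $q=t=y^{1/2}$ that recovers $P_d(y)$, hence $\Omega_\beta(y^{1/2})$ as it appears in $\bar F^{NS}$; conflating these two specializations would, if taken literally, put the wrong object on the left-hand side. In short: the strategy you describe is the correct (known) one, but the proposal does not constitute a proof, and the one place where you attempt to justify an input rather than cite it contains an error.
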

The conjecture was proven in \cite{Bou23} when $S=\PP^2$ under the assumption of the $\chi$-independent conjecture of moduli of one-dimensional sheaves on $\PP^2$. The $\chi$-independent conjecture was later proven by Maulik and Shen \cite{MS23} for toric del Pezzo surfaces, and further generalized by Yuan \cite{Y23} to all del-Pezzo surfaces.

Our first result is that 

\begin{thm}\label{thm:main0}
\[\text{Conjecture \ref{conj:rsh/GW} }\Rightarrow \text{Conjecture \ref{conj:main0}.}\]
In particular, Conjecture \ref{conj:main0} holds when $S=\PP^2$.
\end{thm}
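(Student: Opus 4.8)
Write $w=-K_S\cdot\beta=\beta\cdot E$ and $[w]:=\frac{y^{w/2}-y^{-w/2}}{y^{1/2}-y^{-1/2}}$, so that $\sum_{j=0}^{w-1}y^{j}=\prod_{m\mid w,\,m>1}\Phi_m(y)$ differs from $[w]$ only by a monomial. Since $\Omega_\beta(y^{1/2})=y^{-\frac12\dim_\C M_\beta}P_\beta(y)$ and since a quotient of two palindromic Laurent polynomials is automatically palindromic whenever it is a Laurent polynomial, Conjecture~\ref{conj:main0} is equivalent to the divisibility $[w]\mid\Omega_\beta(y^{1/2})$ in $\Z[y^{\pm1/2}]$ — equivalently, $\Omega_\beta$ vanishes at every root of unity $\zeta\neq1$ with $\zeta^{w}=1$, equivalently the Betti numbers $b_{2j}(M_\beta)$ grouped by $j$ modulo $m$ have equal sums for each $m>1$ dividing $w$. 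I would prove this divisibility by induction on $\beta$, ordered by $\beta\cdot E$; note that for the smallest values $\beta$ is primitive, so the induction step below degenerates appropriately.

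\textbf{Step 2 (coefficient extraction and M\"obius inversion).} Matching the coefficient of $Q^{\gamma}$ on the two sides of $\bar F^{NS}=\bar F^{S/E}$ gives, for every effective $\gamma$ and with $y=e^{i\hbar}$,
\[
\sqrt{-1}\sum_{\substack{k\ge1\\ \gamma/k\ \mathrm{effective}}}\frac1{k^2}\,\frac{(-1)^{(\gamma/k)^2+1}\Omega_{\gamma/k}(y^{k/2})}{y^{k/2}-y^{-k/2}}=(-1)^{\gamma\cdot E-1}\sum_{g\ge0}\frac{N^{S/E}_{g,\gamma}}{\gamma\cdot E}\hbar^{2g-1}.
\]
A twisted M\"obius inversion over the multiple-cover index $k$ isolates the primitive term: $\sqrt{-1}\,(-1)^{\beta^2+1}\Omega_\beta(y^{1/2})/(y^{1/2}-y^{-1/2})$ equals the explicit $\Z$-combination $\sum_{l}\frac{\mu(l)}{l^2}(-1)^{(\beta/l)\cdot E-1}\sum_{g}\frac{N^{S/E}_{g,\beta/l}}{(\beta/l)\cdot E}(l\hbar)^{2g-1}$, so $\Omega_\beta$ is written explicitly through the genus-summed maximal-contact series of $(S,E)$ in the classes $\beta/l$. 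This reduces Conjecture~\ref{conj:main0} to a statement purely about $\bar F^{S/E}$: that $y^{w/2}-y^{-w/2}$ divides $(y^{1/2}-y^{-1/2})\,\Omega_\beta$.

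\textbf{Step 3 (divisibility on the Gromov--Witten side, and conclusion).} Here I would use the description of $\bar F^{S/E}$ from Section~\ref{Section:loc/rel}: by the log--local correspondence the maximal-contact invariants of $(S,E)$ are governed by the Gromov--Witten theory of local $\PP^2$ together with that of a local elliptic curve, which gives $\bar F^{S/E}$ a closed form with controlled zeros and poles in $y$ and a (log-)BPS presentation $\bar F^{S/E}=\sqrt{-1}\sum_{\gamma>0}\sum_{k\ge1}\frac1{k^2}\,\Omega^{S/E}_\gamma(y^{k/2})/(y^{k/2}-y^{-k/2})\,Q^{k\gamma}$ with Laurent-polynomial coefficients $\Omega^{S/E}_\gamma$. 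The point to extract is $[\gamma\cdot E]\mid\Omega^{S/E}_\gamma$: the single point of maximal tangency with $E$, of contact order $\gamma\cdot E$, should equip the relative geometry along $E$ with a level-$(\gamma\cdot E)$ structure whose contribution to the local elliptic curve is exactly the quantum integer $[\gamma\cdot E]$. Granting this, comparison with Step~2 forces $\Omega_\beta=\pm\,\Omega^{S/E}_\beta$, whence $[w]\mid\Omega_\beta$ and the induction of Step~1 closes. For $S=\PP^2$, Conjecture~\ref{conj:rsh/GW} is a theorem (\cite{Bou23}, using the $\chi$-independence of \cite{MS23,Y23}), so Conjecture~\ref{conj:main0} follows for $\PP^2$.

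The main obstacle is Step~3: one needs a handle on $\bar F^{S/E}$ far beyond its bare $\hbar$-expansion — enough to locate its zeros and poles in the variable $y$, or equivalently to pin down the quantum-integer divisibility of its (log-)BPS content — and this is precisely where the local $\PP^2$/local elliptic curve computations, together with the special geometry of the anticanonical pair $(S,E)$, must enter in an essential way.
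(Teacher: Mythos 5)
Your Steps 1 and 2 are sound and consistent with the paper: the reduction of Conjecture \ref{conj:main0} to the divisibility of $\Omega_\beta$ by the quantum integer $[\beta\cdot E]$ is exactly the paper's reformulation (Conjecture \ref{conj:main}), and extracting the $Q^\gamma$-coefficient of $\bar F^{NS}=\bar F^{S/E}$ followed by M\"obius inversion over multiple covers is a legitimate variant of the paper's bookkeeping (the paper instead keeps the full sum over $k\mid\beta$ and disposes of the $k>1$ terms by induction). The problem is Step 3, which you yourself flag as the main obstacle: it is not an argument but a restatement of what must be proved. Two distinct things are missing there. First, you posit a ``(log-)BPS presentation'' of $\bar F^{S/E}$ with \emph{Laurent-polynomial} coefficients $\Omega^{S/E}_\gamma$; but establishing that the genus-summed relative series is a Laurent polynomial in $y$ at all (rather than merely a power series in $\hbar$) is essentially the entire content of the proof, not an input. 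Second, the heuristic that maximal tangency ``should equip the relative geometry with a level-$(\gamma\cdot E)$ structure whose contribution is the quantum integer $[\gamma\cdot E]$'' is never substantiated.

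For comparison, the paper supplies both missing pieces concretely. The repackaged local/relative correspondence (Proposition \ref{prop:local/rel}, Corollary \ref{cor:rel/local}) produces the factor $2\sin\frac{(\beta\cdot E)\hbar}{2}$ --- i.e.\ the numerator of $[\beta\cdot E]$ --- \emph{automatically}, as the degree-zero local-elliptic-curve contribution $F^{E,\tw}_{0,(\beta)}$; no ``level structure'' argument is needed. The Laurent-polynomiality is then proved by showing each tree contribution $\Cont_T$ lies in $\Q[y,y^{-1}]$ (Lemma \ref{lem:key}), which in turn rests on (i) a new closed formula for stationary Gromov--Witten invariants of local elliptic curves via an extension of the Bryan--Pandharipande TQFT (Theorem \ref{thm:keythm}), giving the divisibility of $F^{E,\tw}_{d_E,\boldsymbol\beta}$ by the relevant quantum-integer factors (Corollary \ref{cor:rat}), and (ii) the Gopakumar--Vafa structure of $\W^{K_S}$, whose double poles at $y=1$ are cancelled against those edge factors. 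Finally an induction on divisors of $\beta$ closes the argument, with palindromicity and integrality recovered exactly as in your Step 1. Without an actual proof of the Laurent-polynomiality and the quantum-integer divisibility of the relative series, your proposal does not yet constitute a proof.
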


The proof relies on the all-genus local/relative correspondence, established by the authors in collaboration with Bousseau and Fan in \cite{BFGW}. In this paper, we introduce a new way to package 
the all-genus local/relative correspondence (by summing over genera):

\begin{prop}[=Proposition \ref{prop:local/rel}]
Fixing a curve class $\beta>0$, we have
\begin{equation*}
\W^{K_S}_\beta(\hbar)  =   \    \frac{(\beta\cdot E) \V^{S/E}_{\beta}(\hbar)}{ 2 \sin \frac{(\beta\cdot E) \hbar}{2} } 
    + \sum_{0< d_E[E]\leq \beta}  \sum_{\boldsymbol{\beta}=(\beta_1,\cdots,\beta_n)\atop |\boldsymbol{\beta}|+d_E[E] =\beta} \frac{1}{|\Aut(\boldsymbol{\beta})|}  F^{E,\tw}_{d_E,\boldsymbol{\beta}}(\hbar) \prod_{i}  \V^{S/E}_{\beta_i}(\hbar).
\end{equation*}
\end{prop}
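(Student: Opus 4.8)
The plan is to feed the all-genus local/relative correspondence of \cite{BFGW} into the generating series and resum. I will use that correspondence in its graph-sum form: for each genus $g$ and each $\beta>0$, the local Gromov--Witten invariant of $\mathrm{Tot}(K_S)$ in class $\beta$ equals a sum, over the combinatorial types describing how a connected stable map degenerates relative to the anticanonical divisor $E\subset S$, of products of (i) relative Gromov--Witten invariants of the pair $(S,E)$ with prescribed tangency along $E$, (ii) Gromov--Witten invariants of $E$ twisted by the rank-two bundle $N_{E/S}\oplus(K_S|_E)=\mathcal{O}_E(-K_S)\oplus K_S|_E$ --- the normal bundle of $E$ inside the threefold $\mathrm{Tot}(K_S)$, which has determinant $\mathcal{O}_E$, so that this ``local elliptic curve'' is Calabi--Yau and its twisted theory is well defined --- and (iii) purely combinatorial rubber/gluing edge factors. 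Each of $\W^{K_S}_\beta$, $\V^{S/E}_{\beta_i}$ and $F^{E,\tw}_{d_E,\boldsymbol\beta}$ is, by its definition in Section~\ref{Section:loc/rel}, the generating series in $\hbar$ of the corresponding genus-$g$ invariants; so after substitution the task is to reorganize the resulting graph sum into the two displayed pieces.

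First I would isolate the graphs in which no component of the domain is contracted into $E$. Such a graph consists of a single relative component of class $\beta$ meeting $E$ in one point with maximal tangency $\beta\cdot E$, carrying a ``foot'' that records the behaviour in the $K_S$-fibre direction over that point. Summing the genus contributions of the foot --- via rubber calculus and the standard Hodge-integral identities, which collect these contributions into the elementary series resumming to $(\beta\cdot E)\big/\big(2\sin\tfrac{(\beta\cdot E)\hbar}{2}\big)$ --- collapses this part of the sum exactly to $\dfrac{(\beta\cdot E)\,\V^{S/E}_\beta(\hbar)}{2\sin\frac{(\beta\cdot E)\hbar}{2}}$. Then I would treat the remaining graphs, those with at least one component contracted into $E$: group them by the multiset $\boldsymbol\beta=(\beta_1,\dots,\beta_n)$ of classes carried by their relative components and by the total degree $d_E>0$ of the contracted part. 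This is a finite grouping since $|\boldsymbol\beta|+d_E[E]=\beta$ and the $\beta_i$ are effective (the empty multiset $n=0$ accounting for the case $\beta=d_E(-K_S)$, where the whole curve lies over $E$). For fixed $(\boldsymbol\beta,d_E)$ the degeneration/gluing formula factors the contribution as $\prod_i\V^{S/E}_{\beta_i}(\hbar)$ times a twisted $E$-series whose insertions at the $n$ gluing points and whose obstruction twist are precisely those of $F^{E,\tw}_{d_E,\boldsymbol\beta}$, while symmetrizing over permutations of equal $\beta_i$'s produces the factor $1/|\Aut(\boldsymbol\beta)|$. It then remains to check that the $F^{E,\tw}_{d_E,\boldsymbol\beta}$ defined in Section~\ref{Section:loc/rel} is exactly the series forced by this matching, and that the only infinite resummation used --- the one producing the $\sin$ denominator --- converges formally.

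The main obstacle is the bookkeeping of the correction terms. After degenerating to the normal cone of $E$ and applying the rubber gluing formula, one must show that the graphs with components over $E$ reassemble --- with exactly the right obstruction-bundle twist, edge multiplicities and automorphism weights --- into $\sum_{(\boldsymbol\beta,d_E)}\frac{1}{|\Aut(\boldsymbol\beta)|}\,F^{E,\tw}_{d_E,\boldsymbol\beta}(\hbar)\prod_i\V^{S/E}_{\beta_i}(\hbar)$, with no residual terms and no overcounting, and that the ``foot'' resummation of the first step is disjoint from, and compatible with, these $d_E>0$ contributions. In particular one must match the twist --- the class of $H^\bullet$ of $\mathcal{O}_E(-K_S)\oplus K_S|_E$ pulled back along each contracted component, against the two normal directions of $E$ in $\mathrm{Tot}(K_S)$ --- and carry the edge factors correctly through the iterated gluing; this is where the argument is most delicate. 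The remaining points --- the precise powers of $\hbar$, $\chi$-independence, and well-definedness of the series --- should be routine given \cite{BFGW}.
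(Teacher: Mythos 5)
Your top-level route --- substitute the all-genus local/relative correspondence of \cite{BFGW} into the generating series and resum over genus --- is the paper's route, and the decomposition you aim for (a $d_E=0$ part collapsing to the $\sin$-denominator term, a $d_E>0$ part grouped by $(\boldsymbol\beta,d_E)$ with a factor $1/|\Aut(\boldsymbol\beta)|$) is the right one. The problem is that almost none of the proof is actually carried out. The correspondence is already available as a numerical identity for each fixed $g$, namely equation \eqref{eqn:loc/rel_origin}, so there is nothing to re-derive about degeneration to the normal cone, rubber calculus, obstruction-bundle twists, or edge multiplicities: all of that is packaged in the cited identity. Re-opening it --- precisely the place you flag as ``most delicate'' and leave unexecuted, deferring the identification of $F^{E,\tw}_{d_E,\boldsymbol\beta}$ with ``the series forced by this matching'' and the absence of residual terms --- is both unnecessary and, as written, a genuine gap.

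What the proof actually requires, and what is absent from your write-up, is the formal bookkeeping: (i) multiply \eqref{eqn:loc/rel_origin} by $\hbar^{2g-2}$, sum over $g$, and check the exponent identity $2g-2=(2g_0-2+n)+\sum_{j}(2g_j-1)$, so that the genus sum factorizes into $F^{E,\tw}_{d_E,\boldsymbol\beta}\prod_j\V^{S/E}_{\beta_j}$; (ii) match signs, i.e.\ $(-1)^{\beta_j\cdot E}(\beta_j\cdot E)N^{S/E}_{g_j,\beta_j}=\bigl(-(\beta_j\cdot E)^2\bigr)\cdot(-1)^{\beta_j\cdot E-1}N^{S/E}_{g_j,\beta_j}/(\beta_j\cdot E)$, which is exactly why the factor $\prod_i\bigl(-(\beta_i\cdot E)^2\bigr)$ sits in the definition of $F^{E,\tw}$; (iii) verify that summing $1/|\Aut(\boldsymbol\beta,\bg)|$ over genus distributions collapses to $1/|\Aut(\boldsymbol\beta)|$ times the product of genus-summed series; and (iv) evaluate the $d_E=0$ contribution: the standalone term $\frac{(-1)^{\beta\cdot E-1}}{\beta\cdot E}N^{S/E}_{g,\beta}$ of \eqref{eqn:loc/rel_origin} supplies the $\hbar^{-1}$-coefficient and the stable $g_0\geq 1$ degree-zero twisted invariants of $E$ supply the remaining coefficients of $\frac{\beta\cdot E}{2\sin\frac{(\beta\cdot E)\hbar}{2}}$, while $F^{E,\tw}_{0,\boldsymbol\beta}=0$ for $n>1$ because $\ev_j^*\omega\cdot\ev_k^*\omega=0$ on a degree-zero component. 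None of (i)--(iv) appears in your proposal, and (iv) is the only non-formal input; your appeal to ``rubber calculus and standard Hodge-integral identities'' gestures at it but neither pins down the identity being used nor reconciles it with the stability convention in the definition of $N^{\Etw}_{g,0}$.
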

This formula expresses the generating series $\W^{K_S}_\beta$ of Gromov-Witten invariants of the total space of $K_S$ in terms of generating series $\V^{S/E}_{\beta_i}$ of the maximal contact Gromov-Witten invariants of the pair $(S,E)$ and the  series $F^{E,\tw}_{d_E,\boldsymbol{\beta}}$ for a local elliptic curve. See Section \ref{Section:loc/rel} for more details.

Using this equation, one can iteratively solve for $\V^{S/E}$ in terms of $\W^{K_S}$ and $F^{E,\tw}$ (see Corollary \ref{cor:rel/local}). Combing this result with the refined sheaves/Gromov-Witten correspondence leads to a crucial equality (see also equation \eqref{eqn:sheaf/GW}) for Theorem \ref{thm:main0}:

\begin{equation}\label{eqn:crueqn}
\begin{split}
\sum_{k|\beta} \frac{-1}{k(y^{\frac{k}{2}}-y^{-\frac{k}{2}})^2} \left(\frac{\frac{\beta\cdot E}{k}(-1)^{\frac{\beta^2}{k^2}+1}\Omega_{\frac{\beta}{k}}\left(y^{\frac{k}{2}}\right)}{\frac{\left(y^{\frac{k}{2}}\right)^{\frac{\beta\cdot E}{k}}-\left(y^{\frac{k}{2}}\right)^{-\frac{\beta\cdot E}{k}}}{y^{\frac{k}{2}}-y^{-\frac{k}{2}}}}-\sum_{g\geq 0}n_{g,\frac{\beta}{k}}(-1)^g(y^{\frac{k}{2}}-y^{-\frac{k}{2}})^{2g}\right) &\\
=\sum_{T\in \RT_\beta,\atop t^{-1}(0)\neq \emptyset} \Cont_{T}.
\end{split}
\end{equation}
where $n_{g,\beta}$ are Gopakumar-Vafa invariants of $K_S$, and the RHS of the equality \eqref{eqn:crueqn} is a sum over rooted trees such that each contribution $\Cont_{T}$ can be expressed explicitly in terms of generating series of Gromov-Witten invariants of $K_S$ and a local elliptic curve. 

The next crucial step in proving Theorem \ref{thm:main0} is to establish that each 
$$ \Cont_{T}\in\Q[y,y^{-1}]$$
which follows from an explicit formula (see Theorem \ref{thm:keythm} for more details) for stationary Gromov-Witten invariants of local elliptic curves. This formula is established by extending the topological
quantum field theory (TQFT) formalism of \cite{BP08} to include stationary insertions. 

Using induction and assuming Conjecture \ref{conj:rsh/GW}, we derive an even stronger result:
\[\frac{-1}{(y^{\frac{1}{2}}-y^{-\frac{1}{2}})^2} \left(\frac{(\beta\cdot E)(-1)^{\beta^2+1}\Omega_{\beta}\left(y^{\frac{1}{2}}\right)}{{\frac{\left(y^{\frac{1}{2}}\right)^{\beta\cdot E}-\left(y^{\frac{1}{2}}\right)^{-\beta\cdot E}}{y^{\frac{1}{2}}-y^{-\frac{1}{2}}}}}-\sum_{g\geq 0}n_{g,\beta}(-1)^g(y^{\frac{1}{2}}-y^{-\frac{1}{2}})^{2g}\right)\in \Z[y,y^{-1}].\]

\begin{rmk}
There is a quit parallel story in \cite{BW23}. Let $D$ be a smooth rational ample divisor in a smooth projective surface $X$. It was shown in \cite[Theorem 1.7]{BW23} that the generating series of maximal contact Gromov-Witten invariants of the pair $(X,D)$ (with fixed curve class $\beta$) can be divided by the (shifted) Poincar\'e polynomial of projective space $\PP^{D\cdot\beta-1}$. 
Using the quiver/Gromov-Witten correspondence \cite{Bou20, Bou21},
this is equivalent to the fact that the (intersection) Poincar\'e polynomial of certain quiver moduli space can be divided by the Poincar\'e polynomial of $\PP^{D\cdot\beta-1}$. 

In contrast with with the case of $M_{\beta}$, there is a geometric explanation for the divisibility of Poincar\'e polynomial in the quiver case.
Actually, it was shown in \cite[Theorem 1.11]{BW23} that such a quiver moduli space has a small resolution such that after the small resolution, it becomes a projective bundle.
\end{rmk}

\subsection{Explicit calculations} Next, we specialize $S=\PP^2$ and use $\Omega^{\mathbb P^2}_d$ to denote the shifted Poincar\'e polynomial of the moduli space $M_d$ of Gieseker semistable sheaves on $\PP^2$ supported on curves with degree $d\in \Z_{>0}$. The equality \eqref{eqn:crueqn} also introduces a novel approach to computing the Poincar\'e polynomials $\Omega^{\mathbb P^2}_d$ using Gromov-Witten invariants of local $\PP^2$ and stationary Gromov-Witten invariants of a local elliptic curve. In Section \ref{sec:Nume}, we give a detailed computation of $\Omega^{\mathbb P^2}_d$ for $d\leq 6$ using this new method, and provide a list of $\hat{\Omega}^{\mathbb P^2}_d$ for $d\leq 10$ where
\begin{equation}\label{eqn:shiftOmega}
\hat{\Omega}^{\mathbb P^2}_d\coloneqq y^{\frac{(d-1)(d-2)}{2}}\cdot \frac{\Omega^{\mathbb P^2}_d}{{{\frac{\left(y^{\frac{1}{2}}\right)^{3d}-\left(y^{\frac{1}{2}}\right)^{-3d}}{y^{\frac{1}{2}}-y^{-\frac{1}{2}}}}}}=\frac{P_d(y)}{\sum_{i=0}^{3d-1}y^i}\in \Z[y]. 
\end{equation}
In fact, we have calculated $\hat{\Omega}^{\mathbb P^2}_d$ for $d\leq 16$ using Gopakumar-Vafa invariants of local $\PP^2$
in \cite{HKR}. Since the coefficients of $\hat{\Omega}^{\mathbb P^2}_d$ grow rapidly, we limit the list to $d\leq 10$ in this paper. A complete list of $\hat{\Omega}^{\mathbb P^2}_d$ for $d\leq 16$ is available at

\vspace{1em}
\centerline{\url{https://sites.google.com/site/guoshuaimath/poincarepolynomials}}
\vspace{1em}

\noindent
The new calculation matches with the results in the literature \cite{Y14,Y18,Y23,Y24,CC15,CC16,CvGKT}. 

Equality \eqref{eqn:crueqn} also enables us to give a closed formula for the leading Betti numbers of $M_d$ when $d\geq 6$:
\begin{thm}[=Theorem \ref{thm:2nd}]\label{thm:leadingBetti}
For $d\geq 6$, we have
$$
\hat{\Omega}^{\mathbb P^2}_d \bigg|_{y^{\leq 2d-11}} =  \prod_{k>0} \frac{1}{(1-y^{k})(1-y^{k+1})^{2}} \Big( 1 - 3\,y^{d-1}\cdot \frac{ 1+  y + y^2   }{1-y}  \Big) \bigg|_{y^{\leq 2d-11}}.
$$
\end{thm}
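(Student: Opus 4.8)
The plan is to read off the low-degree coefficients of $\hat{\Omega}^{\mathbb P^2}_d$ directly from the crucial identity \eqref{eqn:crueqn}. Specialize it to $S=\mathbb P^2$, $\beta=dH$ (so $\beta\cdot E=3d$, $\beta^2=d^2$), write $g_d=\tfrac{(d-1)(d-2)}{2}$, and let $L_k$ denote the $k$-th summand on the left-hand side of \eqref{eqn:crueqn}. Using \eqref{eqn:shiftOmega} together with $\tfrac{(y^{1/2})^{3d}-(y^{1/2})^{-3d}}{y^{1/2}-y^{-1/2}}=y^{-(3d-1)/2}\sum_{i=0}^{3d-1}y^i$, a short computation gives $L_1=3d(-1)^d\tfrac{y^{1-g_d}}{(1-y)^2}\hat{\Omega}^{\mathbb P^2}_d(y)+\sum_{g\ge 0}(-1)^g n_{g,d}\,y^{1-g}(1-y)^{2g-2}$, and solving \eqref{eqn:crueqn} for $\hat{\Omega}^{\mathbb P^2}_d$ yields
\begin{equation*}
\hat{\Omega}^{\mathbb P^2}_d(y)=\frac{(-1)^{d+1}}{3d}\sum_{g\ge 0}(-1)^g n_{g,d}\,(1-y)^{2g}y^{g_d-g}\;+\;\frac{(-1)^{d}(1-y)^2 y^{g_d-1}}{3d}\Big(\sum_{T\in\RT_{dH},\ t^{-1}(0)\neq\emptyset}\Cont_T-\sum_{k\ge 2,\ k\mid d}L_k\Big).
\end{equation*}
Our target is the range $0\le j\le 2d-11$. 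Here the multicover terms are harmless: since $\hat{\Omega}^{\mathbb P^2}_{d/k}$ has constant term $1$, the $y$-valuation of $L_k$ is at least $k-k\,g_{d/k}$, so its contribution has $y$-valuation at least $g_d-1+k-k\,g_{d/k}=\tfrac{d^2(k-1)}{2k}\ge \tfrac{d^2}{4}>2d-11$ for all $d$.

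It thus remains to analyze, modulo $y^{2d-10}$, the Gopakumar--Vafa term together with the tree sum over $T$ with $t^{-1}(0)\neq\emptyset$. Every such $T$ carries a class decomposition $\sum_i b_i[H]+3d_E[H]=d[H]$ with $d_E\ge 1$, so the surface classes appearing have total degree $\le d-3$; by Corollary \ref{cor:rel/local} the factors $\V^{S/E}_{b_i}$ are then governed by the series $\W^{K_{\mathbb P^2}}_{b_i}$, i.e.\ by the invariants $n_{g,b_i}$ in strictly smaller degrees, while the local-elliptic factors $F^{E,\tw}_{d_E,\boldsymbol{\beta}}$ are made explicit rational functions of $y$ by the TQFT formula of Theorem \ref{thm:keythm}. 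This sets up an induction on $d$: assuming the claimed formula for $\hat{\Omega}^{\mathbb P^2}_{d'}$ with $d'<d$, one feeds it into the trees, collects all contributions of $y$-degree (after the $y^{g_d-1}$ shift) at most $2d-11$, and checks that they reassemble into $\prod_{k>0}\tfrac{1}{(1-y^k)(1-y^{k+1})^2}$ up to degree $d-2$, with the single correction $-3y^{d-1}\tfrac{1+y+y^2}{1-y}$ times that product appearing from degree $d-1$ on. Geometrically this mirrors a stratification of $M_d$: the ``generic'' stratum of sheaves on integral curves contributes the stable series, while the codimension-$(d-1)$ stratum of sheaves whose support contains a line contributes the correction --- the factor $3=e(\check{\mathbb P}^2)=n_{0,1}$, and $\tfrac{1+y+y^2}{1-y}$ encodes the moduli $M_H\cong\check{\mathbb P}^2$ of the split-off line (Poincar\'e polynomial $1+y+y^2$) together with the local-elliptic normalization.

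The remaining input is the behavior of the near-top-genus Gopakumar--Vafa invariants of local $\mathbb P^2$. After substituting $g=g_d-m$, matching the Gopakumar--Vafa term against $\prod_{k>0}\tfrac{1}{(1-y^k)(1-y^{k+1})^2}$ in the window amounts to an identity for $n_{g_d-m,d}$ with $0\le m\le 2d-11$. For such small $m$ these invariants virtually count degree-$d$ plane curves with at most $m$ nodes; by the Gopakumar--Vafa/perverse-filtration description of the $n_{g,d}$ attached to $M_d$ (valid for $\mathbb P^2$ thanks to $\chi$-independence) they are controlled by the low perverse pieces of $H^*(M_d)$ along the support map $M_d\to|dH|$, whose generic fibre is the compactified Jacobian of a smooth plane curve; these pieces stabilize in $d$ and are computed by a universal, degree-independent node-counting series whose reorganization is exactly $\prod_{k>0}\tfrac{1}{(1-y^k)(1-y^{k+1})^2}$, with stabilization range comfortably exceeding $2d-11$ for $d\ge 6$. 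This step can also be checked, and if desired bootstrapped, against the explicit values of $\hat{\Omega}^{\mathbb P^2}_d$ for $d\le 16$ computed from the Gopakumar--Vafa invariants of local $\mathbb P^2$, together with the polynomiality in $d$ of node polynomials.

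The main obstacle I anticipate is the second paragraph: although \eqref{eqn:crueqn} expresses $\hat{\Omega}^{\mathbb P^2}_d$ through lower-degree data, that data a priori involves all invariants $n_{g,b}$ with $b<d$ and the full stationary Gromov--Witten theory of the local elliptic curve, so proving that everything except the single line-splitting family is pushed beyond degree $2d-11$, and computing that family's total contribution cleanly, requires the precise rational form supplied by Theorem \ref{thm:keythm} and careful bookkeeping of the $\tfrac{1}{2\sin}$-normalizations and the automorphism factors $1/|\Aut(\boldsymbol{\beta})|$ in Proposition \ref{prop:local/rel}. By comparison, the near-top-genus asymptotics of the Gopakumar--Vafa term is essentially the classical universality of node counts on $\mathbb P^2$.
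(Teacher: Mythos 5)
Your overall strategy is the same as the paper's: specialize \eqref{eqn:crueqn} to $\PP^2$, discard the $k\geq 2$ multicover terms by a valuation estimate (your bound matches the paper's inequality \eqref{eqn:k2ineq}), isolate the tree contributions that survive modulo $y^{2d-10}$, and match against the near-top-genus Gopakumar--Vafa invariants. However, the two steps that carry all the content are not actually supplied. First, the claim that only the ``line-splitting'' contributions survive in degrees $\leq 2d-11$ is exactly what Propositions \ref{prop:degbd1} and \ref{prop:degbd2} prove, and they rest on the nontrivial degree bound of Lemma \ref{lem:degbd} for the local-elliptic-curve series $G^E_{d_E,\boldsymbol{\beta}}$, established by an induction on the generating-function identity \eqref{eqn:treeid}. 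You flag this as ``careful bookkeeping'' and an anticipated obstacle, but without a quantitative bound on $\deg_y G^E_{d_E,\boldsymbol{\beta}}$ for $d_E\geq 2$ there is no argument that the infinitely many other trees are pushed beyond degree $2d-11$. Note also that the surviving trees contribute $\big(y^{3(d-3)/2}-y^{-3(d-3)/2}\big)^2\mathcal F_{d-3}$ and an $n_{0,1}$-weighted $\mathcal F_{d-4}$ term, i.e.\ they are expressed through Gopakumar--Vafa invariants of lower degree, not through $\hat{\Omega}^{\mathbb P^2}_{d'}$; so the induction ``on the claimed formula for $\hat{\Omega}^{\mathbb P^2}_{d'}$'' you propose is not the right inductive structure.

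Second, and more seriously, your treatment of the near-top-genus invariants $n_{g(d)-m,d}$ is a genuine gap. The paper's input here is Theorem \ref{thm:gvbd}, a rigorous closed formula from \cite{GZ15} obtained via the topological vertex, which is then massaged by Lemmas \ref{lem:1st} and \ref{lem:2nd} into the product $\prod_{k>0}(1-y^k)^{-1}(1-y^{k+1})^{-2}$. You instead propose to control these invariants ``by the low perverse pieces of $H^*(M_d)$'' via the Gopakumar--Vafa/perverse-filtration description. That description (Maulik--Toda, $P^{\mathsf{ref}}_d(q,1)=\mathcal F_d(q)$) is conjectural, as the paper's own appendix states, and even granting it the argument is circular: the low-degree cohomology of $M_d$ is precisely the unknown you are computing, so you cannot use a ``universal, degree-independent'' description of it in the range up to $2d-11$ without proving that stabilization independently --- which is essentially the theorem itself. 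The appeal to node polynomials on $\PP^2$ does not rescue this, since the identification of $n_{g,d}$ of local $\PP^2$ with node counts is not established in the needed range. To make your proof work you would need to import Theorem \ref{thm:gvbd} (or an equivalent rigorous asymptotic for the leading GV invariants) as an explicit input rather than deriving it from $M_d$.
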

Throughout, $|_{y^{\leq m}}$ denotes truncation (keeping terms with $y$-powers $\leq m$). A geometric interpretation of the above formula in terms of the cohomology rings of $M_{d,\chi}$ is discussed in Appendix \ref{subsection:geometry}. Theorem \ref{thm:leadingBetti}, proven in Section \ref{sec:Leading}, can be summarized as follows:

A detailed $y$-degree analysis (see Propositions \ref{prop:degbd1} and \ref{prop:degbd2}) on the RHS of \eqref{eqn:crueqn} reveals that only two rooted trees contribute to the leading Betti numbers. Theorem \ref{thm:leadingBetti} then follows from a closed formula for the leading Gopakumar-Vafa invariants, derived by the first author and Zhou in \cite{GZ15} using the topological vertex formula (see Theorem \ref{thm:gvbd} for more details).

\begin{rmk}
In \cite{Y23,Y24}, Yuan showed that the Betti numbers $b_i(M_{d})$ can be computed from the Betti numbers of Hilbert schemes of points in $\PP^2$ when $i\leq 2d$. Yuan's results can be reformulated as 
$$
\hat{\Omega}^{\mathbb P^2}_d \bigg|_{y^{\leq d}} =  \prod_{k>0} \frac{1}{(1-y^{k})(1-y^{k+1})^{2}} ( 1 - 3\,y^{d-1}-6y^d ) \bigg|_{y^{\leq d}}, \quad d\geq 5
$$
which agrees with Theorem \ref{thm:leadingBetti} and the numerical results in Section \ref{sec:Nume}. Our results extend the computation of leading Betti numbers $b_i(M_d)$ from $i\leq 2d$ to $i\leq 4d-22$.
\end{rmk}

\begin{rmk}\label{rmk:betterbd}
Based on the numerical results in Section \ref{sec:Nume}, we observe that the equality in Theorem \ref{thm:leadingBetti} holds for
degrees up to $2d-5$, rather than $2d-11$. Moreover, a more detailed analysis of the numerical data in Section \ref{sec:Nume} leads us to propose the following conjecture:
\end{rmk}

\begin{conj} \label{conj:2ndappro}
For $d\geq 4$, we have 
$$
\hat{\Omega}^{\mathbb P^2}_d \bigg|_{y^{\leq 3d-10}}  =  \prod_{k>0} \frac{1}{(1-y^{k})(1-y^{k+1})^{2}} \Big( 1 - 3\,y^{d-1}\cdot \frac{ 1+  y + y^2   }{1-y} + 3\, y^{2d-4}\cdot f(y) \Big) \bigg|_{y^{\leq 3d-10}}
$$
where 
$$f(y)=\frac{(1 + y + y^2) (-2 + 2 y + 4 y^2 + 2 y^3 + y^4 + 2 y^5)}{(1 - y) (1 - y^2)}. $$

\end{conj}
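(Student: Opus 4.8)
The plan is to run the same machine that proves Theorem \ref{thm:leadingBetti}, but one order deeper in the $y$-adic filtration. Starting from the key identity \eqref{eqn:crueqn} with $\beta = d$, the $k=1$ term on the left carries the unknown $\hat{\Omega}^{\mathbb P^2}_d$ together with the Gopakumar-Vafa invariants $n_{g,d}$ of local $\mathbb P^2$, the $k>1$ terms carrying $\hat{\Omega}^{\mathbb P^2}_{d/k}$ and $n_{g,d/k}$ at rescaled arguments, while the right-hand side is an explicit sum over rooted trees $T \in \RT_d$ with $t^{-1}(0)\neq \emptyset$. First I would sharpen the $y$-degree estimates of Propositions \ref{prop:degbd1} and \ref{prop:degbd2}: instead of showing that all but two rooted trees contribute only in degrees $> 2d-11$, I would pin down the (still finite) list of rooted trees whose contribution $\Cont_T$ is nonzero in degrees $\leq 3d-10$, and simultaneously check that the $k>1$ terms do not interfere in this range. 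Heuristically, each genuinely new vertex class of degree $\geq 1$ costs roughly $d$ in $y$-degree, so moving the threshold from $2d-11$ to $3d-10$ should bring in exactly the trees carrying a conic-class vertex, or two line-class vertices — these being the source of the new term $3\,y^{2d-4} f(y)$, whose exponent $2d-4 = 2(d-1)-2$ already signals a "two-unit" origin.

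Next I would evaluate $\Cont_T$ for each tree on this enlarged list. By construction $\Cont_T$ is built from the stationary Gromov-Witten series of a local elliptic curve — which, by Theorem \ref{thm:keythm} and its TQFT derivation, lies in $\Q[y,y^{-1}]$ and is completely explicit — together with genuine Gromov-Witten / Gopakumar-Vafa data of $K_{\mathbb P^2}$ in low curve classes. The leading-order trees reproduce, via the closed formula of Guo-Zhou \cite{GZ15} (Theorem \ref{thm:gvbd}), the factor $\prod_{k>0}(1-y^{k})^{-1}(1-y^{k+1})^{-2}$ and the correction $-3\,y^{d-1}(1+y+y^2)/(1-y)$; the new trees should, after the same resummation, produce $3\,y^{2d-4} f(y)$. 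Tracking the precise rational function $f(y)$ requires the next-to-leading behavior of the invariants $n_{g,\beta}$ of local $\mathbb P^2$, one layer beyond the leading asymptotics of \cite{GZ15}, packaged in a closed generating-function form. Finally, the low cases $d=4,5$ (outside the range $d\geq 6$ of Theorem \ref{thm:leadingBetti}) would be checked directly against the computations of Section \ref{sec:Nume}.

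The main obstacle is exactly this last input: a closed formula for the \emph{subleading} Gopakumar-Vafa invariants of local $\mathbb P^2$ in the relevant range, analogous to but strictly stronger than Theorem \ref{thm:gvbd}. The topological-vertex argument of \cite{GZ15} controls only the top layer of these invariants, and extracting the next layer in a form clean enough to resum into the rational function $f(y)=(1+y+y^2)(-2+2y+4y^2+2y^3+y^4+2y^5)/((1-y)(1-y^2))$ appears genuinely harder — this is, I expect, the reason the statement is recorded as a conjecture rather than a theorem. A secondary difficulty is combinatorial: more rooted trees contribute in degrees $\leq 3d-10$ than in the leading case, and one must verify that all of their contributions, together with the $k>1$ pieces of \eqref{eqn:crueqn}, conspire to give precisely the claimed closed form — routine in principle but error-prone, and for now underwritten only by the numerical evidence of Section \ref{sec:Nume} (which, as noted in Remark \ref{rmk:betterbd}, in fact matches to even higher degree).
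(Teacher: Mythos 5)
The statement you were asked to prove is recorded in the paper as Conjecture \ref{conj:2ndappro}, and the paper itself offers no proof of it: the authors verify it numerically against the data of Section \ref{sec:Nume} (computed up to $d\leq 16$), explicitly state that ``a potential resolution of the above conjecture may require a more refined analysis of Gopakumar-Vafa invariants of local $\PP^2$'' which they ``leave for future exploration,'' and in Appendix \ref{sec:higherrange} only reformulate it as a recursion \eqref{eq: recursionrange3} motivated by generalized Mumford relations and Harder--Narasimhan stratifications. Your proposal is therefore a strategy outline rather than a proof, and to your credit you say so: you correctly identify that the route would be to push the degree analysis of Propositions \ref{prop:degbd1} and \ref{prop:degbd2} one band deeper, enumerate the additional rooted trees (those with a conic-class datum or two line-class data, consistent with the exponent $2d-4$), and resum via an analogue of Lemmas \ref{lem:1st} and \ref{lem:2nd}. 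This matches the only plausible extension of the paper's proof of Theorem \ref{thm:leadingBetti}, and your diagnosis of the obstruction coincides exactly with the authors' own.

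The genuine gap, then, is the one you name but do not close: Theorem \ref{thm:gvbd} controls $(-1)^{d-1}y^{g(d)}\mathcal F_d$ only modulo $y^{2d-4}$, whereas the window $y^{\leq 3d-10}$ requires the next layer of the asymptotics of the Gopakumar--Vafa invariants of $K_{\PP^2}$ in a closed form that can be resummed against the new tree contributions to produce $3\,y^{2d-4}f(y)$; no such formula is established in \cite{GZ15} or in this paper. A secondary unverified step is the exhaustive enumeration and degree-bounding of all trees (including $d_E=2$ trees and $k>1$ multiple-cover terms) contributing in degrees between $2d-10$ and $3d-10$, together with the identities analogous to Lemmas \ref{lem:1st} and \ref{lem:2nd} that would collapse their total into $f(y)$. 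Until both are supplied, the statement remains exactly what the paper says it is: a conjecture supported by numerics for $d\leq 16$ and by the geometric heuristics of Appendix \ref{sec:higherrange}.
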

A potential resolution of the above conjecture may require a more refined analysis of Gopakumar-Vafa invariants of local $\PP^2$. We leave this for future exploration.

\begin{rmk}
Conjecture \ref{conj:2ndappro} was initially formulated when $f(y)$ was known only up to order 10. The precise closed-form expression for $f(y)$ was proposed by M. Moreira based on our preliminary coefficients and certain geometric considerations. This further inspired Moreira to propose a more general conjecture on higher-range Betti numbers of $M_d$ (Conjecture \ref{conj: higherrange}), as well as another conjecture (Conjecture \ref{conj: higherrangerefined}) involving refinements coming from the perverse/Chern filtration, both presented in Appendix \ref{sec:higherrange}.
\end{rmk}

\subsection{Organization of the paper}
This paper is organized as follows. In Section \ref{Section:loc/rel}, we give a new way to package the all-genus local/relative correspondence. In Section \ref{sec:localcurvesfm}, we give an explicit formula for Gromov-Witten invariants of local elliptic curves with stationary insertions. In Section \ref{sec:pfCGKT}, we give a proof of Theorem \ref{thm:main0}, and we give a detailed $y$-degree analysis for the contributions arising in the graph sum formula established in Section \ref{Section:loc/rel}. In Section \ref{sec:Leading}, we give a proof of Theorem \ref{thm:leadingBetti}. In Section \ref{sec:Nume}, we give a detailed calculation of the shifted Poincar\'e polynomial $\Omega^{\mathbb P^2}_d$ for $d\leq 6$ and provide a list of $\hat{\Omega}^{\mathbb P^2}_d$ up to $d\leq 10$. In Appendix \ref{sec:higherrange}, a more general conjecture concerning the higher range Betti numbers of $M_{d}$ is presented, along with another conjecture that involves refinements from the perverse/Chern filtration.

\subsection{Acknowledgment}
We would like to thank Pierrick Bousseau, Jinwon Choi, Michel van Garrel, Rahul Pandharipande, Junliang Shen for helpful discussions and comments. Special thanks go to Miguel Moreira for contributing the Appendix and for his help in formulating Conjecture \ref{conj:2ndappro}.

The first author is supported by the National Key R\&D Program
of China (No. 2023YFA1009802)
and NSFC 12225101. The second author is supported by the National Key R\&D Program of China (No. 2022YFA1006200) and SwissMAP. The second author is especially grateful to Rahul Pandharipande for hosting an academic visit at ETH Zurich, which greatly facilitated the completion of this work.

\section{All-genus local/relative correspondence}\label{Section:loc/rel}
In this section, we 
give a new way to package the all-genus local/relative correspondence established in \cite{BFGW}.

Let $E$ be a smooth effective anticanonical divisor on a del Pezzo surface $S$. Let $\bM_{g}(K_S,\beta)$ be the moduli space of stable maps to the total space of $K_S$ with genus $g$, curve class $\beta$. We define the local Gromov-Witten invariants
\[ N_{g,\beta}^{K_S} \coloneqq \int_{[\bM_g(K_S,\beta)]^\vir} 1 \,.\]
Let $\bM_{g}(S/E,\beta)$ be the moduli space of genus $g$ relative stable maps of class $\beta$ to $(S,E)$ with only one contact condition of maximal tangency along $E$. It has virtual dimension $g$ and we define the maximal contact Gromov-Witten invariants of the pair $(S,E)$
\[ N_{g,\beta}^{S/E}\coloneqq \int_{[\bM_g(S/E,\beta)]^\vir}
(-1)^g \lambda_g \,.\]
Let $N_{E/S}$ be the normal bundle to $E$ in $S$. We consider the rank $2$ vector bundle
$N \coloneqq N_{E/S}\oplus N_{E/S}^{\vee}$ over $E$
and the anti-diagonal scaling action of $\GM$ on $N$ with weight $1$ on $N_{E/S}$ and weight $-1$ on $N_{E/S}^{\vee}$. 
We denote by $t$ the corresponding equivariant parameter. Let 
$\bM_{g,n}(E,d_E)$ be the moduli space of $n$-pointed, genus $g$ stable maps of degree $d_E$ to the elliptic curve $E$. We define the local invariants of $E$ to be
\begin{equation*}
N_{g,d_E}^{\Etw}(\bd)
\coloneqq \int_{[\bM_{g,n}(E,d_E)]^{\vir}} \left( \prod_{j=1}^n \frac{t \ev_j^*\omega}{t-d_j \psi_j} \right) e_{\GM}(-R^{\bullet}\pi_*f^{*}N)
\end{equation*}
where $\bd=(d_1,d_2,\cdots,d_n)$ is a tuple of positive integers, $\psi_j$ are cotangent line classes, $\omega$ is the point class of $E$, $\pi$ is the universal domain curve and $f$ is the universal stable map.

The local invariants
$N_{g,\beta}^{K_S}$ and relative invariants $N_{g,\beta}^{S/E}$ can be related via the following all-genus local/relative correspondence \cite{BFGW}:
\begin{equation}\label{eqn:loc/rel_origin}
\begin{split}
N_{g,\beta}^{K_{S}}
=& \frac{(-1)^{\beta \cdot E-1}}{\beta
\cdot E} N_{g,\beta}^{S/E}+ \\
&\sum_{n\geq 0}\sum_{\substack{g=g_0+g_1+\dots+g_n\\
\beta=d_E[E]+\beta_1+\dots +\beta_n \\
d_E\geq 0,\,\beta_j\cdot E>0}}  
\frac{N_{g_0,d_E}^{\Etw}(\boldsymbol{\beta} \cdot E)}{|\Aut (\boldsymbol{\beta},\bg)|}
\prod_{j=1}^n \left((-1)^{\beta_j \cdot E}
(\beta_j \cdot E)
N_{g_j,\beta_j}^{S/E}\right).
\end{split}
\end{equation}
Here 
\[\boldsymbol{\beta}\coloneqq(\beta_1,\dots,\beta_n )\,, \qquad \boldsymbol{\beta} \cdot E \coloneqq(\beta_1\cdot E,\dots,\beta_n \cdot E) \,,\]
\[\bg\coloneqq (g_1,\cdots,g_n)\,,\quad  \frac{1}{|\Aut (\boldsymbol{\beta},\bg)|}\coloneqq \frac{1}{|\Aut ((\beta_1,g_1),\cdots,(\beta_n,g_n))|},\]
and we simply set $N_{g_0,d_E}^{\Etw}(\boldsymbol{\beta} \cdot E)$ to be zero if the corresponding moduli is unstable.

\subsection{A new way to package the all-genus local/relative correspondence}
We introduce the following generating series by summing over genera:
$$
\W_{\beta}^{K_S}(\hbar)\coloneqq \sum_{g \geq 0} N_{g,\beta}^{K_{S}} \hbar^{2g-2}, \qquad 
 \V_{\beta}^{S/E}(\hbar)\coloneqq {(-1)^{\beta\cdot E-1}}  \sum_{g \geq 0}  \frac{N_{g,\beta}^{S/E}}{\beta\cdot E} \hbar^{2g-1},
$$
$$
F^{E,\tw}_{d_E,\boldsymbol{\beta}}(\hbar)\coloneqq \sum_{g \geq 0}\left( \prod_{i=1}^n(-(\beta_i\cdot E)^2)\right)\cdot N_{g,d_E}^{\Etw}(\boldsymbol{\beta} \cdot E) \hbar^{2g-2+n}.
$$
Then we can package the all-genus local/relative correspondence \eqref{eqn:loc/rel_origin} as 
\begin{prop}\label{prop:local/rel}
For a fixed curve class $\beta>0$, we have
\begin{align*}
\W^{K_S}_\beta(\hbar)   =  \ & \sum_{0\leq d_E[E]\leq \beta}  \sum_{\boldsymbol{\beta}=(\beta_1,\cdots,\beta_n)\atop |\boldsymbol{\beta}|+d_E[E] =\beta}  \frac{1}{|\Aut(\boldsymbol{\beta})|}  F^{E,\tw}_{d_E,\boldsymbol{\beta}}(\hbar) \prod_{i}  \V^{S/E}_{\beta_i}(\hbar)
\\  =  \ &    \    \frac{(\beta\cdot E) \V^{S/E}_{\beta}(\hbar)}{ 2 \sin \frac{(\beta\cdot E) \hbar}{2} } 
    + \sum_{0< d_E[E]\leq \beta}  \sum_{\boldsymbol{\beta}=(\beta_1,\cdots,\beta_n)\atop |\boldsymbol{\beta}|+d_E[E] =\beta} \frac{1}{|\Aut(\boldsymbol{\beta})|}  F^{E,\tw}_{d_E,\boldsymbol{\beta}}(\hbar) \prod_{i}  \V^{S/E}_{\beta_i}(\hbar).
\end{align*}
\end{prop}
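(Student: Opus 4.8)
The plan is to deduce both displayed equalities from the all-genus local/relative correspondence \eqref{eqn:loc/rel_origin} of \cite{BFGW}, the first by a purely formal repackaging and the second by simplifying the $d_E=0$ term. Fix $\beta>0$. Since all curve classes appearing are effective, the conditions $|\boldsymbol{\beta}|+d_E[E]=\beta$ and $g=g_0+\dots+g_n$ leave only finitely many $(d_E,\boldsymbol{\beta})$ and, for each genus $g$, finitely many genus distributions; hence multiplying \eqref{eqn:loc/rel_origin} by $\hbar^{2g-2}$ and summing over $g\ge 0$ gives a bona fide identity of Laurent series in $\hbar$. The left-hand side is $\W^{K_S}_\beta(\hbar)$ by definition.

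For the first equality I would match the right-hand side of \eqref{eqn:loc/rel_origin} termwise with $\sum_{d_E,\boldsymbol{\beta}}\frac{1}{|\Aut(\boldsymbol{\beta})|}F^{E,\tw}_{d_E,\boldsymbol{\beta}}(\hbar)\prod_i\V^{S/E}_{\beta_i}(\hbar)$, checking three bookkeeping points. First, $\hbar$-degrees: the $\hbar^{2g_0-2+n}$ from $F^{E,\tw}_{d_E,\boldsymbol{\beta}}$ and the $\hbar^{2g_i-1}$ from each $\V^{S/E}_{\beta_i}$ multiply to $\hbar^{2(g_0+\sum_i g_i)-2}=\hbar^{2g-2}$, as needed. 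Second, coefficients: the factor $\prod_i(-(\beta_i\cdot E)^2)$ inside $F^{E,\tw}_{d_E,\boldsymbol{\beta}}$ together with the factor $(-1)^{\beta_i\cdot E-1}/(\beta_i\cdot E)$ inside $\V^{S/E}_{\beta_i}$ exactly reconstitute the weight $(-1)^{\beta_i\cdot E}(\beta_i\cdot E)$ attached to $N^{S/E}_{g_i,\beta_i}$ in \eqref{eqn:loc/rel_origin}. Third, automorphisms: replacing $\sum_{\bg}|\Aut(\boldsymbol{\beta},\bg)|^{-1}(\cdots)$ by $|\Aut(\boldsymbol{\beta})|^{-1}\prod_i\V^{S/E}_{\beta_i}(\hbar)$ (where each $\V^{S/E}_{\beta_i}$ already sums over its own genus) is the standard symmetry-factor identity, since the stabilizer in the permutation group of $\boldsymbol{\beta}$ of a genus assignment $\bg$ is precisely $\Aut(\boldsymbol{\beta},\bg)$. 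Finally, the isolated seed term $\frac{(-1)^{\beta\cdot E-1}}{\beta\cdot E}N^{S/E}_{g,\beta}$ of \eqref{eqn:loc/rel_origin} sums to $\hbar^{-1}\V^{S/E}_\beta(\hbar)$, which is exactly the $d_E=0$, $g_0=0$, $\boldsymbol{\beta}=(\beta)$ contribution once one uses the degree-zero convention $N^{\Etw}_{0,0}((d))=-1/d^2$; with that convention the first displayed equality holds verbatim.

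For the second equality I would isolate the $d_E=0$ summand. When $d_E=0$ the maps are constant, $\bM_{g,n}(E,0)=\bM_{g,n}\times E$, and every $\ev_j$ factors through the single $E$-factor, so all $\ev_j^*\omega$ are pulled back from one copy of $H^*(E)$; since $\omega\cup\omega=0$ there, the insertion $\prod_{j=1}^n\frac{t\,\ev_j^*\omega}{t-d_j\psi_j}$ vanishes once $n\ge 2$. Hence the $d_E=0$ part reduces to $\boldsymbol{\beta}=(\beta)$, $n=1$, contributing $\big(\hbar^{-1}+F^{E,\tw}_{0,(\beta)}(\hbar)\big)\V^{S/E}_\beta(\hbar)$, and it remains to evaluate $F^{E,\tw}_{0,(\beta)}$. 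On $\bM_{g,1}(E,0)=\bM_{g,1}\times E$ the virtual class is $(-1)^g\lambda_g\cap[\,\cdot\,]$ (the obstruction $R^1\pi_*f^*TE\cong\mathbb{E}^\vee$, as $TE$ is trivial), and $e_{\GM}(-R^\bullet\pi_*f^*N)$ collapses, via Mumford's relation $c(\mathbb{E})c(\mathbb{E}^\vee)=1$, to the purely equivariant factor $(-1)^{g+1}t^{2g-2}$. Expanding $\frac{t\,\ev_1^*\omega}{t-d\psi_1}$ and integrating — only the $\psi_1^{2g-2}$ term survives by dimension — gives $N^{\Etw}_{g,0}((d))=-d^{2g-2}\int_{\bM_{g,1}}\lambda_g\psi_1^{2g-2}$ with $d=\beta\cdot E$, whence $\hbar^{-1}+F^{E,\tw}_{0,(\beta)}(\hbar)=\hbar^{-1}\sum_{g\ge 0}\big(\int_{\bM_{g,1}}\lambda_g\psi_1^{2g-2}\big)(d\hbar)^{2g}$. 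The Faber--Pandharipande evaluation $\sum_{g\ge 0}\big(\int_{\bM_{g,1}}\lambda_g\psi_1^{2g-2}\big)x^{2g}=\frac{x}{2\sin(x/2)}$ then yields $\hbar^{-1}+F^{E,\tw}_{0,(\beta)}(\hbar)=\frac{\beta\cdot E}{2\sin((\beta\cdot E)\hbar/2)}$, and multiplying by $\V^{S/E}_\beta(\hbar)$ produces the first term of the second displayed equality; the remaining summands are precisely those with $0<d_E[E]\le\beta$.

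The main obstacle is combinatorial rather than conceptual: reconciling the weights $|\Aut(\boldsymbol{\beta},\bg)|$ with $|\Aut(\boldsymbol{\beta})|$ while exchanging the genus sums with the product of generating series, and simultaneously tracking the interlocking signs $(-1)^{\beta_i\cdot E}$, $(-1)^{\beta_i\cdot E-1}$, $-(\beta_i\cdot E)^2$ and the $\hbar$-exponents — one must be careful whether the tuples $\boldsymbol{\beta}$, $\bg$ are ordered or unordered. The only genuine computation, the degree-zero cap $F^{E,\tw}_{0,(\beta)}$, reduces to Mumford's relation and a classical Hodge-integral evaluation and presents no real difficulty, the one delicate point there being the $g_0=0$ seed-term convention needed for the first equality to read verbatim.
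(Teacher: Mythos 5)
Your proof is correct and follows essentially the same route as the paper: the first equality is the same termwise repackaging of \eqref{eqn:loc/rel_origin} (with the same sign, $\hbar$-power, and $|\Aut(\boldsymbol{\beta},\bg)|$-versus-$|\Aut(\boldsymbol{\beta})|$ bookkeeping), and the second rests on the evaluation of $F^{E,\tw}_{d_E=0,\boldsymbol{\beta}}$, which the paper merely asserts but which you correctly derive via Mumford's relation and the Faber--Pandharipande evaluation of $\int_{\bM_{g,1}}\lambda_g\psi_1^{2g-2}$. Your explicit convention $N^{\Etw}_{0,0}((d))=-1/d^2$ for the unstable $g_0=0$ moduli is exactly what is needed for the isolated first term of \eqref{eqn:loc/rel_origin} to be absorbed into the $d_E=0$ summand of the first displayed line, and it cleanly resolves the tension with the paper's stated ``set unstable terms to zero'' convention.
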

Here $[E]$ is the curve class associated to $E$,
\[ |\boldsymbol{\beta}|\coloneqq \sum_{i=1}^n \beta_i,\qquad \frac{1}{|\Aut (\boldsymbol{\beta})|}\coloneqq \frac{1}{|\Aut (\beta_1,\cdots,\beta_n)|},\]
and the second equality follows from the fact that $$F^{E,\tw}_{d_E=0,\boldsymbol{\beta}=(\beta_1,\dots,\beta_n)}(\hbar)=
\begin{cases}
\frac{\beta_1\cdot E}{2 \sin \frac{(\beta_1\cdot E) \hbar}{2} }, & \text{ if } n=1,\\
0, & \text{ if } n>1.
\end{cases}
$$

\subsection{A graph sum formula}  \label{sec:graphsum}
Using the equation in Proposition \ref{prop:local/rel},
one can iteratively 
solve for $\V^{S/E}$ in terms of $\W^{K_S}$ and $F^{E,\tw}$ as follows.

Let $T$ be a rooted tree\footnote{A rooted tree is a tree with a special vertex labeled as the "root" the of tree. It  serves as a point of reference for other vertices in the tree.}  together with the following assignments:

\begin{enumerate}
\item[(i)] $t:V_T\longrightarrow \{0,1\}$, where we use $V_T$ to denote the set of vertices. 
\item[(ii)] $d^0:t^{-1}(0)\longrightarrow \Z_{>0}$, $d^1:t^{-1}(1)\longrightarrow H_2(S,\Z)$.
\item[(iii)] $d_2:E_T\longrightarrow H_2(S,\Z)$, where we use $E_T$ to denote the set of edges.
\end{enumerate}
The first assignment $t$ gives a labeling of vertices. A vertex labeled by $0$ will be represented by a circle and a vertex labeled by $1$ will be represented by a square. The rest assignments give another labeling of vertices and edges by curve classes of $S$. We require these labelings to satisfy:
\begin{enumerate}
\item[(1)] If the number of vertices $|V_T|>1$, then vertices labeled by $1$ are always leaves\footnote{A leaf in a rooted tree is any vertex having no children. So the rooted vertex is always not a leaf if the number of vertices is greater than $1$.} of the rooted tree $T$. 
\item[(2)] The images of $d^1$ and $d_2$ are nonzero effective curve classes.
\item[(3)] For any vertex $v_i$ labeled by $0$ which is not the rooted vertex, we use $e^i_0,e^i_1,\cdots,e^i_{n_i}$ to denote the edges incident to $v_i$ where $e_0^i$ is the unique edge belongs to the path connecting $v_i$ to the rooted vertex. We require
\[d_2(e_0^i)=d^0(v_i)[E]+\sum_{k=1}^{n_i}d_2(e_k^i).\]
If $|V_T|>1$, then for each vertex $v_i$ labeled by $1$, there is a unique edge $e_0^i$ incident to $v_i$. We further require
$$
d_2(e_0^i)=d^1(v_i).
$$
\end{enumerate}
Given such a $T\in \RT$, we define the degree of $T$ to be 
\[\sum_{v\in t^{-1}(1)}d^1(v)+\sum_{v\in t^{-1}(0)} d^0(v)[E].\]
Two labeled rooted trees are called isomorphic if there exists an isomorphism\footnote{An isomorphism between rooted trees always maps rooted vertex to rooted vertex.} between the underlying rooted trees which commutes with all the labelings. We use $\RT$ to denote the isomorphism classes of all such labeled rooted trees. We further use $\RT_\beta$ to denote the labeled rooted trees in $\RT$ with fixed curve class $\beta$. 

Let us give a concrete example of a labeled rooted tree in the case $S=\PP^2$:

\vspace{1em}
\tikzset{every picture/.style={line width=0.75pt}} 

\begin{center}
\begin{tikzpicture}[x=0.75pt,y=0.75pt,yscale=-1,xscale=1]

\draw   (100,159.5) .. controls (100,153.7) and (104.7,149) .. (110.5,149) .. controls (116.3,149) and (121,153.7) .. (121,159.5) .. controls (121,165.3) and (116.3,170) .. (110.5,170) .. controls (104.7,170) and (100,165.3) .. (100,159.5) -- cycle ;

\draw   (198,71.5) .. controls (198,65.7) and (202.7,61) .. (208.5,61) .. controls (214.3,61) and (219,65.7) .. (219,71.5) .. controls (219,77.3) and (214.3,82) .. (208.5,82) .. controls (202.7,82) and (198,77.3) .. (198,71.5) -- cycle ;
\draw   (198,158.5) .. controls (198,152.7) and (202.7,148) .. (208.5,148) .. controls (214.3,148) and (219,152.7) .. (219,158.5) .. controls (219,164.3) and (214.3,169) .. (208.5,169) .. controls (202.7,169) and (198,164.3) .. (198,158.5) -- cycle ;
\draw   (197.5,235) -- (218,235) -- (218,255.5) -- (197.5,255.5) -- cycle ;

\draw   (284.5,29) -- (305,29) -- (305,49.5) -- (284.5,49.5) -- cycle ;

\draw   (284.5,88) -- (305,88) -- (305,108.5) -- (284.5,108.5) -- cycle ;

\draw    (121,159.5) -- (198,159.5) ;
\draw    (110.5,149) -- (198,71.5) ;
\draw    (110.5,170) -- (198,248) ;
\draw    (217,78) -- (285,98) ;
\draw    (216,63) -- (285,38) ;

\draw (105,152) node [anchor=north west][inner sep=0.75pt]   [align=left] {2};
\draw (204,64) node [anchor=north west][inner sep=0.75pt]   [align=left] {1};
\draw (204,151) node [anchor=north west][inner sep=0.75pt]   [align=left] {2};
\draw (203,237) node [anchor=north west][inner sep=0.75pt]   [align=left] {6};
\draw (290,31) node [anchor=north west][inner sep=0.75pt]   [align=left] {2};
\draw (290,90) node [anchor=north west][inner sep=0.75pt]   [align=left] {2};
\draw (240,34) node [anchor=north west][inner sep=0.75pt]   [align=left] {2};
\draw (240,93) node [anchor=north west][inner sep=0.75pt]   [align=left] {2};
\draw (146,91) node [anchor=north west][inner sep=0.75pt]   [align=left] {7};
\draw (147,141) node [anchor=north west][inner sep=0.75pt]   [align=left] {6};
\draw (147,219) node [anchor=north west][inner sep=0.75pt]   [align=left] {6};

\end{tikzpicture}
\end{center}
\vspace{1em}
\noindent
Since $S=\PP^2$, curve classes can be represented by integers.
The assigned integers are put in the vertices or along the edges.
We always put the rooted vertex on the leftmost. The degree of the above tree is $25$. 

We can express $\V^{S/E}_\beta$ as a sum over labeled rooted trees in $\RT_\beta$ by iteratively solving the equation in Proposition \ref{prop:local/rel}:
\begin{cor}\label{cor:rel/local}
For a fixed $\beta>0$, we have
\begin{align*}
\V^{S/E}_\beta = \ & \frac{2\sin\frac{(\beta\cdot E)\hbar}{2}}{\beta\cdot E}\sum_{T\in \RT_\beta}\Cont_{T}\\
= \ & \frac{2\sin{\frac{(\beta\cdot E)\hbar}{2}}}{\beta\cdot E}\big(\W^{K_S}_\beta+\sum_{T\in \RT_\beta,\atop t^{-1}(0)\neq \emptyset} \Cont_{T}\big).
\end{align*}
Here the contribution of each rooted tree $T$ in $\RT_\beta$ is given by
\begin{equation}\label{eq:cont}
\begin{split}
\Cont_{T}= & \frac{1}{|\Aut(T)|}
\prod_{e\in E_T}\frac{2\sin\frac{(d_2(e)\cdot E)\hbar}{2}}{d_2(e)\cdot E} \cdot\\
&\prod_{v_i\in t^{-1}(0)}\left(-F^{E,\tw}_{d^0(v_i),\{d_2(e_1^i),d_2(e_2^i),\cdots,d_2(e_{n_i}^i)\}}\right)\prod_{v_i\in t^{-1}(1)}\W^{K_S}_{d^1(v_i)}
\end{split}
\end{equation}
where $|\Aut(
T)|$ stands for the number of automorphism of $T$ as a labeled rooted tree, $e^i_1,\cdots,e^i_{n_i}$ are the edges incident to $v_i$
which do not belong to the path connecting $v_i$ and the rooted vertex.
\end{cor}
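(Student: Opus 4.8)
The plan is to derive both forms of the identity for $\V^{S/E}_\beta$ formally from Proposition \ref{prop:local/rel} by strong induction on the curve class $\beta$, where effective classes are ordered by declaring $\beta'\prec\beta$ when $\beta-\beta'$ is a nonzero effective class. This order is well-founded, since $[E]=-K_S$ is ample: along $\prec$ the anticanonical degree strictly drops, so there are only finitely many effective $\beta'$ with $\beta'\preceq\beta$; in particular $\RT_\beta$ is finite and every generating series in play is a well-defined Laurent series in $\hbar$, so there is no convergence question to address. Beyond Proposition \ref{prop:local/rel} no geometry is needed; the content is purely the combinatorial reorganization of the iteration.

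For the inductive step, I would solve Proposition \ref{prop:local/rel} for its leading term:
\[
\V^{S/E}_\beta = \frac{2\sin\frac{(\beta\cdot E)\hbar}{2}}{\beta\cdot E}\Bigl(\W^{K_S}_\beta - \sum_{0<d_E[E]\le\beta}\ \sum_{\substack{\boldsymbol\beta=(\beta_1,\dots,\beta_n)\\ |\boldsymbol\beta|+d_E[E]=\beta}} \frac{1}{|\Aut(\boldsymbol\beta)|}\, F^{E,\tw}_{d_E,\boldsymbol\beta}\prod_i \V^{S/E}_{\beta_i}\Bigr).
\]
When $d_E>0$ the class $d_E[E]$ is nonzero effective, so each $\beta_i$ satisfies $\beta_i\prec\beta$ and the inductive hypothesis $\V^{S/E}_{\beta_i}=\frac{2\sin\frac{(\beta_i\cdot E)\hbar}{2}}{\beta_i\cdot E}\sum_{T_i\in\RT_{\beta_i}}\Cont_{T_i}$ applies (for $\prec$-minimal $\beta$ the inner sum is empty, which is the base case). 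Substituting and expanding the product of sums, each datum $(d_E,\boldsymbol\beta,T_1,\dots,T_n)$ produces the rooted tree $T$ obtained by attaching $T_1,\dots,T_n$ as the children of a new root $v_0$ with $t(v_0)=0$, $d^0(v_0)=d_E$, and edge labels $d_2(e_i)=\deg(T_i)=\beta_i$. I would then check three things: (i) $T\in\RT_\beta$ — the only constraint needing an argument is (3) at the root of each subtree $T_i$, where it reduces to the tautology that $\deg(T_i)$ equals $d^0$ of that root times $[E]$ plus the sum of the degrees of its children-subtrees; (ii) stripping the root shows that every $T\in\RT_\beta$ whose root is labeled $0$ (including the one-vertex case $n=0$) arises from a unique such datum; and (iii) the factors assemble correctly, namely $-F^{E,\tw}_{d_E,\boldsymbol\beta}$ is the new vertex factor at $v_0$, the $\frac{2\sin\frac{(\beta_i\cdot E)\hbar}{2}}{\beta_i\cdot E}$ are the new edge factors, and together with the edge and vertex products already inside each $\Cont_{T_i}$ they reproduce the products over $E_T$ and $V_T$ in \eqref{eq:cont}.

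The step needing genuine care — and the main, if routine, obstacle — is matching the automorphism prefactors: one must show that summing $\frac{1}{|\Aut(\boldsymbol\beta)|}\prod_i\frac{1}{|\Aut(T_i)|}$ over all data producing a fixed $T$ yields exactly $\frac{1}{|\Aut(T)|}$. This is the standard rooted-tree bookkeeping: grouping the children of the root of $T$ by isomorphism type $S^{(1)},\dots,S^{(k)}$ with multiplicities $\mu_1,\dots,\mu_k$, one has $|\Aut(T)|=\prod_j\mu_j!\,|\Aut(S^{(j)})|^{\mu_j}$ — here one uses that the edge label $d_2(e_i)$ is determined by the subtree $T_i$, so there is no extra edge-labeling freedom — and a short count of the orderings of the children, weighted against $|\Aut(\boldsymbol\beta)|$, gives the identity. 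Finally I would note that the leftover $\W^{K_S}_\beta$ is precisely $\Cont_{T_0}$ for $T_0$ the single-vertex tree labeled $1$ with $d^1=\beta$, and that $T_0$ is the unique tree in $\RT_\beta$ with $t^{-1}(0)=\emptyset$, since by constraint (1) any tree with more than one vertex has all its label-$1$ vertices among its leaves, hence has its root labeled $0$. Collecting the contributions gives $\V^{S/E}_\beta=\frac{2\sin\frac{(\beta\cdot E)\hbar}{2}}{\beta\cdot E}\sum_{T\in\RT_\beta}\Cont_T$, completing the induction, and the second form of the identity is this with the $T_0$ term split off.
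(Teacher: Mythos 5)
Your proposal is correct and is exactly the argument the paper has in mind: the paper gives no detailed proof beyond saying one "iteratively solves" the equation of Proposition \ref{prop:local/rel}, and your induction on effective classes, the bijection between data $(d_E,\boldsymbol\beta,T_1,\dots,T_n)$ and trees with root labeled $0$, and the standard $|\Aut(T)|=\prod_j\mu_j!\,|\Aut(S^{(j)})|^{\mu_j}$ bookkeeping are precisely the steps being elided. The identification of the leftover $\W^{K_S}_\beta$ with the unique single-vertex tree labeled $1$ also matches the paper's remark following the corollary.
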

The second equality in Corollary \ref{cor:rel/local} follows from the fact that there is only one rooted tree $T\in\RT_\beta$ such that $t^{-1}(0)=\emptyset$ and the corresponding contribution is
\[\frac{2\sin{\frac{(\beta\cdot E)\hbar}{2}}}{\beta\cdot E}\W^{K_S}_\beta.\]
For the labeled rooted tree $T$ given above, the contribution $\Cont_{T}$ is
\[-\frac{1}{2}\frac{2\sin{\frac{21\hbar}{2}}}{21}\frac{2\sin{\frac{18\hbar}{2}}}{18}\frac{2\sin{\frac{18\hbar}{2}}}{18}\frac{2\sin{\frac{6\hbar}{2}}}{6}\frac{2\sin{\frac{6\hbar}{2}}}{6}F^{E,\tw}_{2,\{7,6,6\}}
F^{E,\tw}_{2,\emptyset}F^{E,\tw}_{1,\{2,2\}}\W^{K_{\PP^2}}_2\W^{K_{\PP^2}}_2\W^{K_{\PP^2}}_6.\]

\section{Gromov-Witten invariants of local elliptic curves}\label{sec:localcurvesfm}
In this section, we give an explicit formula for $F^{E,\tw,\bullet}_{d_E,\boldsymbol{\beta}}$ by extending the topological quantum field theory (TQFT) formalism of Bryan and Pandharipande \cite{BP08} to include stationary insertions. The superscript 
$\bullet$ in $F^{E,\tw,\bullet}_{d_E,\boldsymbol{\beta}}$ indicates that we consider possibly disconnected invariants\footnote{Following \cite{BP08}, we require the degree of each connected domain curve to be nonzero.}.

Let $\rho=(\rho_1\geq \rho_2\geq \cdots)$ be a partition of $d_E$. It corresponds to a Young diagram. We use $c_{\rho}$ to denote the total sum of the contents of all its boxes where the content of a box at the $(i,j)$-position is $j-i$. We further set $k$ to be the degree of the normal bundle $N_{E/S}$ or equivalently $k=K_S^2$, and set
\begin{equation}\label{eqn:tildeE}
\tilde{e}(\rho,z) \coloneqq \sum_{i=1}^{\infty}(e^{z (\rho_i-i+\frac{1}{2})}-e^{z(-i+\frac{1}{2})}), 
\end{equation}
where we extend $\rho$ to an infinite sequence by appending zeros.

\begin{thm}\label{thm:keythm}
\[F^{E,\tw,\bullet}_{d_E,\boldsymbol{\beta}}(\hbar)=\sum_{|\rho|=d_E}  (-1)^{kd_E }e^{k c_\rho \sqrt{-1}   \hbar} \prod_{i=1}^n (-\beta_i\cdot E)\frac{\tilde{e}(\rho,(\beta_i\cdot E)\sqrt{-1}\hbar)}{\sqrt{-1}}\]
where $|\rho|=\sum_i\rho_i$.
\end{thm}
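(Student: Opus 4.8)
The plan is to compute the disconnected stationary local Gromov--Witten series $F^{E,\tw,\bullet}_{d_E,\boldsymbol\beta}$ by realizing it inside a TQFT over the Fock space of partitions, extending the level-$(0,0)$ theory of Bryan--Pandharipande \cite{BP08} to carry stationary $\omega$-insertions weighted by the descendent factors $\tfrac{t\,\ev_j^*\omega}{t-d_j\psi_j}$. First I would recall the Bryan--Pandharipande setup: the equivariant local GW theory of an elliptic curve $E$ with a rank-two bundle $N=N_{E/S}\oplus N_{E/S}^\vee$ carrying the anti-diagonal $\GM$-action is a 2d TQFT whose Frobenius algebra has basis indexed by partitions $\rho$, and the genus-adding/tube operator acts diagonally on $\rho$ with eigenvalue built from $\tilde e(\rho,z)$ and the content $c_\rho$ (this is exactly the structure that produces the $e^{k c_\rho\sqrt{-1}\hbar}$ and $(-1)^{kd_E}$ factors in the claim, since $k=K_S^2=\deg N_{E/S}$). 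The key new input is the \emph{stationary insertion operator}: a $2$-pointed genus-$0$ building block with one $\omega$-insertion and a descendent $\tfrac{1}{t-d\psi}$, which by the standard string/dilaton plus localization computation on $\bM_{0,n}(E,d_E)$ acts on the partition basis as multiplication by the scalar $\tfrac{\tilde e(\rho,d\sqrt{-1}\hbar)}{\sqrt{-1}}$ up to the combinatorial normalization $(-\beta_i\cdot E)$ already folded into the definition of $F^{E,\tw}_{d_E,\boldsymbol\beta}$.

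Concretely, the steps would be: (1) Set up the TQFT axioms for the twisted local theory of $E$ with stationary insertions, checking that gluing along circles is well defined because the $\omega$-insertions are point classes and do not interfere with the degeneration formula; identify the Frobenius algebra (semisimple, basis $\{e_\rho\}$) and compute the cap, tube, and pair-of-pants from the $g=0$, $d_E=0,1$ data as in \cite{BP08}. (2) Diagonalize: show each $\omega$-descendent insertion with ramification profile degree $d$ acts on $e_\rho$ as the scalar $\tfrac{\tilde e(\rho,d\sqrt{-1}\hbar)}{\sqrt{-1}}$ — this is where the explicit $\tilde e$ from \eqref{eqn:tildeE} enters, and it should follow from the GW/Hurwitz correspondence for $E$ (Okounkov--Pandharipande) combined with the hook-length/Jucys--Murphy description of how $\omega$ acts on the center of the symmetric group algebra, together with the $\psi$-class contribution resummed via the $\tfrac{t}{t-d\psi}$ geometric series. (3) Assemble: a genus-$g$ curve of degree $d_E$ with $n$ stationary points is computed by $\Tr$ of the product of $g$ handle operators and $n$ insertion operators on the $e_\rho$ basis, and since everything is diagonal this trace collapses to $\sum_{|\rho|=d_E}(\text{handle eigenvalue})^g\prod_i(\text{insertion eigenvalue})$; summing over $g$ with the $\hbar^{2g-2+n}$ weighting turns the geometric series in the handle eigenvalue into the prefactor producing $(-1)^{kd_E}e^{kc_\rho\sqrt{-1}\hbar}$, giving exactly the stated closed form. (4) Handle the disconnected-versus-connected bookkeeping: the superscript $\bullet$ means we take the full (disconnected, each component of positive degree) theory, which is automatic in the TQFT/trace formalism.

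I expect the main obstacle to be step (2): pinning down the precise eigenvalue of the stationary descendent insertion operator and matching all signs and powers of $\sqrt{-1}$ and $\hbar$. The difficulty is threefold — first, one must correctly incorporate the equivariant edge factor $e_{\GM}(-R^\bullet\pi_*f^*N)$ into the Frobenius algebra so that the normal bundle degree $k$ appears in the right place; second, the descendent $\tfrac{t\,\ev^*\omega}{t-d\psi}$ must be expanded and the resulting $\psi$-integrals over the elliptic curve's Hurwitz-type loci resummed into the clean $\tilde e(\rho,\cdot)$ shape, which relies on a nontrivial identity for principal specializations of characters (essentially the $q$-analog of the hook content formula); and third, one must verify the $(\beta_i\cdot E)$-rescaling in the definition of $F^{E,\tw}$ exactly cancels the denominators produced by $\psi$-integration so that no spurious factors survive. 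A clean way to manage this is to first verify the formula in the two boundary cases $n=0$ (pure local elliptic curve, recovering \cite{BP08}) and $d_E=1$ with small $n$ by direct localization, then argue the general case by the TQFT gluing axioms, reducing everything to the two-pointed genus-zero building block.
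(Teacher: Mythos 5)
Your proposal follows essentially the same route as the paper: both extend the Bryan--Pandharipande TQFT by computing the level $(0,0)$ building block with a single stationary $\omega$-descendent insertion, identify its action on the idempotent basis $\{v_\rho\}$ via the Gromov--Witten/Hurwitz correspondence of Okounkov--Pandharipande (which is exactly what produces $\tilde e(\rho,\cdot)$ after resumming the $\tfrac{t}{t-d\psi}$ series, with Mumford's relation collapsing the obstruction factor), and then assemble the closed formula by gluing/degeneration. The only minor imprecision is that the relevant GW/Hurwitz input is the stationary sector over $(\PP^1,0)$ with tangency profile $\alpha$, rather than a localization computation on $\bM_{0,n}(E,d_E)$, but this does not change the substance of the argument.
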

\begin{proof}
When there are no interior markings, i.e., $\beta=\emptyset$, Theorem \ref{thm:keythm} specializes to 
\begin{equation}\label{eqn:empty}
F^{E,\tw,\bullet}_{d_E,\emptyset}(\hbar)=\sum_{|\rho|=d_E}  (-1)^{kd_E }e^{ k  c_\rho\sqrt{-1}  \hbar} 
\end{equation}
which is simply Corollary 7.4 of \cite{BP08}. It was proven by the 
TQFT formalism of \cite{BP08}. When there are interior markings, we need to extend the TQFT formalism to include stationary insertions. The key 
is to compute the following level $(0,0)$ series with one interior marking\footnote{The level series were introduced in \cite{BP08} when there are no interior markings. We borrow the notation from \cite{BP08} for level series. But our level series contain stationary insertions. So we add a tilde to indicate the difference.}:
\[\widetilde{\GW}(0|0,0)^{\alpha}(\beta_i)\coloneqq \hbar^{d_E+l(\alpha)}(-t^2)^{l(\alpha)}\mathfrak{z}(\alpha)\tilde{Z}(N)_{\alpha}(\beta_i)\]
where $\alpha$ is a partition of $d_E$, $l(\alpha)$ denotes its length, and
\[\mathfrak{z}(\alpha)=\prod_i m_i(\alpha)!i^{m_i(\alpha)}\]
with $m_i(\alpha)$ being the multiplicity of $i$ in $\alpha$. The term $\tilde{Z}(N)_{\alpha}(\beta_i)$ is defined as
\[\tilde{Z}(N)_{\alpha}(\beta_i)\coloneqq \sum_{g\in \Z}\hbar^{2g-1}(-(\beta_i\cdot E)^2)\int_{[\bM^{\bullet}_{g,1}(\PP^1/0,\alpha)]^{\vir}}\frac{t \ev_1^*(\omega)}{t-(\beta_i\cdot E) \psi_1}e_{\GM}(-R^{\bullet}\pi_* f^*(\mathcal{O}\oplus \mathcal{O}))\]
where $\bM^{\bullet}_{g,1}(\PP^1/0,\alpha)$ is the possibly disconnected moduli space of genus $g$ relative stable maps to $(\PP^1,0)$ with one interior marking and tangency condition $\alpha$,
$\GM$ acts anti-diagonal on the direct sum of two trivial line bundles, and $t$ is the equivariant parameter. By Mumford's relation and dimension constraint, we have 
\[\widetilde{\GW}(0|0,0)^{\alpha}(\beta_i)=\sum_{g\in \Z}\hbar^{k+1}(-1)^{l(\alpha)+g}t^{l(\alpha)-d_E}\mathfrak{z}(\alpha)\int_{[\bM^{\bullet}_{g,1}(\PP^1/0,\alpha)]^{\vir}}(\beta_i\cdot E)^{k+2}\psi_1^k \ev_1^*(\omega).\]
with
\begin{equation}\label{eqn:g2k}
k=d_E+2g-2+l(\alpha).
\end{equation}
According to \cite[Lemma 7.5]{BP08}, up to some factors of $t$, the Frobenius algebra associated to the TQFT in the anti-diagonal case is isomorphic to the center of the group algebra of the symmetric group. The idempotent basis are indexed by partitions of $d_E$ (equation (20) and (21) of \cite{BP08}):
\[v_{\rho}=\frac{\dim \rho}{(d_E)!}\sum_{\alpha}(\sqrt{-1}t)^{l(\alpha)-d_E}\chi^{\rho}_{\alpha}e_{\alpha}\]
or equivalently, we have
\[e_{\alpha}=(\sqrt{-1}t)^{d_E-l(\alpha)}\sum_{\rho}\frac{(d_E)!}{\dim \rho}\frac{\chi^{\rho}_{\alpha}}{\mathfrak{z}(\alpha)}v_{\rho}.\]
So if we write $\sum_{\alpha}\widetilde{\GW}(0|0,0)^{\alpha}(\beta_i)e_{\alpha}$ in terms of the idempotent basis $v_{\rho}$, the coefficient before $v_{\rho}$ is simply
\begin{equation}\label{eqn:bigsum}
\gamma_{\rho}(\beta_i)=\sum_{k\geq 0} (\sqrt{-1})^{k+2}\hbar^{k+1}(\beta_i\cdot E)^{k+2}\sum_{|\alpha|=d_E}\frac{(d_E)!}{\dim \rho}\chi^{\rho}_{\alpha}\int_{[\bM^{\bullet}_{g,1}(\PP^1/0,\alpha)]^{\vir}}\psi_1^k \ev_1^*(\omega).
\end{equation}
We are ready to extend the TQFT formalism of \cite{BP08} to include stationary insertions:
\[F^{E,\tw,\bullet}_{d_E,\boldsymbol{\beta}}=\sum_{|\rho|=d_E}  (-1)^{kd_E }e^{kc_\rho \sqrt{-1}    \hbar} \prod_{i=1}^n \gamma_{\rho}(\beta_i)\]
which can be proved using the degenerating formula as in \cite{BP08}.

We are left to determine $\gamma_{\rho}(\beta_i)$. By the Gromov-Witten/Hurwitz correspondence \cite{OP1}, we know that 
\[\sum_{|\alpha|=d_E}\frac{(d_E)!}{\dim \rho}\chi^{\rho}_{\alpha}\int_{[\bM^{\bullet}_{g,1}(\PP^1/0,\alpha)]^{\vir}}\tau_k(\omega)=[z^{k+1}]\tilde{e}(\rho,z)\]
where $[z^{k+1}]$ means we take the $z^{k+1}$-coefficient of $\tilde{e}(\rho,z)$. Note that we require the degree of each connected domain curve to be nonzero, so we need to remove the degree zero contributions from the Gromov-Witten/Hurwitz correspondence. After plugging into the \eqref{eqn:bigsum}, it becomes
\[\sum_{k\geq 0} (\sqrt{-1})^{k+2}\hbar^{k+1}(\beta_i\cdot E)^{k+2}[z^{k+1}]\tilde{e}(\rho,z)=(-\beta_i\cdot E)\frac{\tilde{e}(\rho,(\beta_i\cdot E)\sqrt{-1}\hbar)}{\sqrt{-1}}.\]
From the above discussion, we may conclude that 
\[F^{E,\tw,\bullet}_{d_E,\boldsymbol{\beta}}(\hbar)=\sum_{|\rho|=d_E}  (-1)^{kd_E }e^{kc_\rho \sqrt{-1}\hbar} \prod_{i=1}^n (-\beta_i\cdot E)\frac{\tilde{e}(\rho,(\beta_i\cdot E)\sqrt{-1}\hbar)}{\sqrt{-1}}.\]
\end{proof}

\begin{cor}\label{cor:rat}
Under the change of variables $y=e^{i\hbar}$, we have
\[F^{E,\tw,\bullet}_{d_E,\boldsymbol{\beta}}(\hbar)\in \left(\prod_{i=1}^n\frac{y^{\frac{\beta_i\cdot E}{2}}-y^{-\frac{\beta_i\cdot E}{2}}}{\sqrt{-1}}\right)\mathbb{Q}[y,y^{-1}]\]
where $\boldsymbol{\beta}=(\beta_1,\beta_2,\cdots,\beta_n)$. 
\end{cor}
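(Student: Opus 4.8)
The plan is to derive Corollary~\ref{cor:rat} directly from the closed formula in Theorem~\ref{thm:keythm}, which expresses $F^{E,\tw,\bullet}_{d_E,\boldsymbol{\beta}}(\hbar)$ as a finite sum over partitions $\rho\vdash d_E$ of the quantity
\[
(-1)^{kd_E}\,e^{k c_\rho\sqrt{-1}\,\hbar}\,\prod_{i=1}^n(-\beta_i\cdot E)\,\frac{\tilde e(\rho,(\beta_i\cdot E)\sqrt{-1}\,\hbar)}{\sqrt{-1}}.
\]
Under $y=e^{i\hbar}$ the prefactor $e^{kc_\rho\sqrt{-1}\,\hbar}$ becomes the monomial $y^{kc_\rho}\in\mathbb{Q}[y,y^{-1}]$, and the sign $(-1)^{kd_E}$ is harmless, so the whole question reduces to understanding the factor $\tilde e(\rho,(\beta_i\cdot E)\sqrt{-1}\,\hbar)/\sqrt{-1}$ for a single $i$: I must show it lies in $\bigl(y^{(\beta_i\cdot E)/2}-y^{-(\beta_i\cdot E)/2}\bigr)\mathbb{Q}[y,y^{-1}]/\sqrt{-1}$, up to the already-extracted $1/\sqrt{-1}$ bookkeeping. (One must keep careful track of the powers of $\sqrt{-1}$: the factor $\prod_i 1/\sqrt{-1}$ in the theorem is what gets matched against the $\prod_i 1/\sqrt{-1}$ on the right-hand side of the corollary, so the substantive claim is about $\tilde e$ evaluated at a purely imaginary argument becoming a Laurent polynomial in $y$ divisible by $y^{m/2}-y^{-m/2}$.)

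The key computation is to evaluate $\tilde e(\rho,z)$ at $z=m\sqrt{-1}\,\hbar$ with $m\coloneqq \beta_i\cdot E\in\mathbb{Z}_{>0}$. By definition \eqref{eqn:tildeE}, $\tilde e(\rho,z)=\sum_{i\ge1}\bigl(e^{z(\rho_i-i+\frac12)}-e^{z(-i+\frac12)}\bigr)$, a sum which is finite because $\rho_i=0$ for $i$ large and the two terms then cancel. Setting $z=m\sqrt{-1}\,\hbar$ and using $y=e^{i\hbar}$, each exponential $e^{z\cdot a}$ with $a\in\frac12+\mathbb{Z}$ becomes $y^{ma}$, a half-integral power of $y$ (or rather of $y^{m}$); thus
\[
\tilde e(\rho,m\sqrt{-1}\,\hbar)=\sum_{i\ge1}\bigl(y^{m(\rho_i-i+\frac12)}-y^{m(-i+\frac12)}\bigr),
\]
a Laurent polynomial in $y^{m/2}$ with integer coefficients. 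I would next factor out $y^{m/2}$ (more precisely, observe that each term is $y^{m/2}$ times an integer power of $y^{m}$) to see the sum lies in $y^{m/2}\cdot\mathbb{Z}[y^{m},y^{-m}]\subset y^{m/2}\mathbb{Q}[y,y^{-1}]$. To get divisibility by $y^{m/2}-y^{-m/2}$ rather than merely by $y^{m/2}$, I would exploit the antisymmetry of $\tilde e$: a short check from \eqref{eqn:tildeE} shows $\tilde e(\rho,-z)=-\tilde e(\rho,z)$ — indeed this is the standard fact that $\tilde e(\rho,z)$ is the ``shifted'' content generating function which is an odd function of $z$ (it is the character $\sum_i(e^{z\,a_i(\rho)}-e^{z\,a_i(\emptyset)})$ with the $a_i$ the shifted parts, and negating $z$ together with the involution on the shifted parts of $\rho$ gives the sign). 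Concretely, oddness in $z$ translates under $y=e^{i\hbar}$, $z=m\sqrt{-1}\,\hbar$ into $P(y^{-1})=-P(y)$ for the Laurent polynomial $P(y)\coloneqq \tilde e(\rho,m\sqrt{-1}\,\hbar)$, and an antisymmetric Laurent polynomial in $y^{1/2}$ is always divisible by $y^{1/2}-y^{-1/2}$; applied with $y$ replaced by $y^{m}$ this gives $P(y)\in(y^{m/2}-y^{-m/2})\mathbb{Q}[y,y^{-1}]$, as wanted. Taking the product over $i=1,\dots,n$ and folding in the monomial $(-1)^{kd_E}y^{kc_\rho}$ and the constants $(-\beta_i\cdot E)$ then yields the claimed membership, and summing over $\rho$ preserves it.

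I do not expect a serious obstacle here: the result is essentially a formal consequence of Theorem~\ref{thm:keythm} once the oddness of $\tilde e(\rho,z)$ in $z$ is recorded. The one place that requires care — and the ``hard part'' in a purely bookkeeping sense — is the consistent tracking of the factors of $\sqrt{-1}$ (and of half-integral versus integral powers of $y$) across the substitution $y=e^{i\hbar}$, so that the $\prod_i 1/\sqrt{-1}$ appearing in Theorem~\ref{thm:keythm} is correctly identified with the $1/\sqrt{-1}$ attached to each $y^{\beta_i\cdot E/2}-y^{-\beta_i\cdot E/2}$ in the statement of Corollary~\ref{cor:rat}, rather than producing a spurious extra power of $\sqrt{-1}$. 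An auxiliary subtlety is the remark following Theorem~\ref{thm:keythm} about removing degree-zero contributions in the Gromov--Witten/Hurwitz correspondence; I would note that this removal is exactly what makes the two exponential families in \eqref{eqn:tildeE} cancel for large $i$, so that $\tilde e(\rho,z)$ is a genuine (finite) Laurent polynomial after the substitution and no convergence issues arise.
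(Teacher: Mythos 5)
Your overall route is the same as the paper's: apply Theorem~\ref{thm:keythm}, absorb the monomial $(-1)^{kd_E}y^{kc_\rho}$ and the constants, and reduce everything to showing that $\tilde e(\rho,m\sqrt{-1}\hbar)$, with $m=\beta_i\cdot E$, lies in $\big(y^{m/2}-y^{-m/2}\big)\Q[y,y^{-1}]$ after $y=e^{i\hbar}$. Your bookkeeping of the $\sqrt{-1}$'s and the observation that $\tilde e(\rho,m\sqrt{-1}\hbar)\in y^{m/2}\,\Z[y^m,y^{-m}]$ are both fine. However, the step on which the divisibility actually hinges is wrong: the claimed antisymmetry $\tilde e(\rho,-z)=-\tilde e(\rho,z)$ is false for general $\rho$. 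Already for $\rho=(2)$ one has $\tilde e((2),z)=e^{3z/2}-e^{-z/2}$, which is not an odd function of $z$. The correct symmetry of this regularized content generating function exchanges $\rho$ with its conjugate partition, namely $\tilde e(\rho',-z)=-\tilde e(\rho,z)$, so oddness in $z$ holds only for self-conjugate $\rho$. Since Theorem~\ref{thm:keythm} sums over \emph{all} partitions of $d_E$ and you apply the divisibility to each summand separately, the argument as written fails.

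The gap is easily repaired, and the repair is in fact more elementary than the symmetry argument you attempted: in the defining sum \eqref{eqn:tildeE}, each term factors as
\begin{equation*}
e^{z(\rho_i-i+\frac12)}-e^{z(-i+\frac12)}=e^{z(-i+\frac12)}\big(e^{z\rho_i}-1\big),
\end{equation*}
and since $\rho_i\in\Z_{\geq 0}$, substituting $z=m\sqrt{-1}\hbar$ turns $e^{z\rho_i}-1$ into $y^{m\rho_i}-1$, which is divisible by $y^{m}-1=y^{m/2}\big(y^{m/2}-y^{-m/2}\big)$ in $\Q[y^{1/2},y^{-1/2}]$; hence every term of the (finite) sum already carries the factor $y^{m/2}-y^{-m/2}$. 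This is essentially the paper's argument, which instead rewrites $\tilde e(\rho,\cdot)$ as a telescoping sum over the sets $S_{\pm}(\rho)$ of half-integers and factors each difference $y^{mm_k}-y^{mn_k}$ as $\big(y^{m/2}-y^{-m/2}\big)\sum_{l}y^{ml}$. With the false oddness claim replaced by either of these one-line factorizations, your proof is correct and coincides with the paper's.
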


\begin{proof}
By Theorem \ref{thm:keythm}, we know that 
\[F^{E,\tw,\bullet}_{d_E,\boldsymbol{\beta}}(\hbar)=\sum_{|\rho|=d_E}  (-1)^{kd_E }e^{kc_\rho \sqrt{-1}\hbar} \prod_{i=1}^n (-\beta_i\cdot E)\frac{\tilde{e}(\rho,(\beta_i\cdot E)\sqrt{-1}\hbar)}{\sqrt{-1}}.\]
For each partition $\rho$, we know that 
\[ (-1)^{kd_E }e^{ k c_\rho \sqrt{-1}  \hbar}\in \Q[y,y^{-1}]\]
after the change of variables $y=e^{i\hbar}$. We only need to show each
\[(-\beta_i\cdot E)\frac{\tilde{e}(\rho,(\beta_i\cdot E)\sqrt{-1}\hbar)}{\sqrt{-1}}\in \left(\frac{y^{\frac{\beta_i\cdot E}{2}}-y^{-\frac{\beta_i\cdot E}{2}}}{\sqrt{-1}}\right)\mathbb{Q}[y,y^{-1}]. \]
Let 
\[S(\rho)=\{\rho_1-\frac{1}{2}>\rho_2-\frac{3}{2}>\cdots>\rho_k-k+\frac{1}{2}>\cdots\}\in \Z+\frac{1}{2}\]
and set $S_{+}(\rho)=S(\rho)\backslash \left(\Z_{\leq 0}-\frac{1}{2}\right)$, $S_{-}(\rho)= \left(\Z_{\leq 0}-\frac{1}{2}\right)\backslash S(\rho)$. We have $|S_{+}(\rho)|=|S_{-}(\rho)|$. So we may assume that 
\[S_{+}(\rho)=\{m_1,m_2,\cdots,m_s\},\quad S_{-}(\rho)=\{n_1,n_2,\cdots,n_s\} \]
where $m_i,n_i$ are half-integers. Then we may express $\tilde{e}(\rho,(\beta_i\cdot E)\sqrt{-1} \hbar)$ as
\[\tilde{e}(\rho,(\beta_i\cdot E)\sqrt{-1}\hbar)=\sum_{k=1}^s(e^{(\beta_i\cdot E)m_k\sqrt{-1}\hbar}-e^{(\beta_i\cdot E)n_k\sqrt{-1}\hbar }).\]
Each summand 
\begin{align*}
&e^{(\beta_i\cdot E)m_k\sqrt{-1}\hbar}-e^{(\beta_i\cdot E)n_k\sqrt{-1}\hbar }\\
=&(e^{\frac{\beta_i\cdot E}{2}\sqrt{-1}\hbar }-e^{-\frac{\beta_i\cdot E}{2}\sqrt{-1}\hbar})\left(\sum_{l=n_k+1/2}^{m_k-1/2}e^{(\beta_i\cdot E)l\sqrt{-1}\hbar }\right)\in \left(\frac{y^{\frac{\beta_i\cdot E}{2}}-y^{-\frac{\beta_i\cdot E}{2}}}{\sqrt{-1}}\right)\mathbb{Q}[y,y^{-1}]
\end{align*}
after the change of variables $y=e^{i\hbar}$. It follows that 
\[(-\beta_i\cdot E)\frac{\tilde{e}(\rho,(\beta_i\cdot E)\sqrt{-1}\hbar)}{\sqrt{-1}}\in \left(\frac{y^{\frac{\beta_i\cdot E}{2}}-y^{-\frac{\beta_i\cdot E}{2}}}{\sqrt{-1}}\right)\mathbb{Q}[y,y^{-1}]. \]
\end{proof}

Using Corollary \ref{cor:rat}, we can deduce the following lemma which will play an important role in the proof of Theorem \ref{thm:main0}.
\begin{lem}\label{lem:key}
For each rooted tree $T\in\RT_\beta$ such that $t^{-1}(0)\neq \emptyset$, we have 
\[\Cont_T\in \Q[y,y^{-1}]\]
after the change of variables $y=e^{i\hbar}$.
\end{lem}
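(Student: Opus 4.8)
The plan is to prove this by unraveling the explicit formula \eqref{eq:cont} for $\Cont_T$ and checking that every factor lies in $\Q[y,y^{-1}]$ after setting $y=e^{i\hbar}$, with the potential denominators from odd powers of $\sqrt{-1}$ and from the $\sin$-factors all cancelling. The combinatorial engine is the edge-vertex incidence structure of the rooted tree $T$ together with the balancing condition (3), which matches the curve-class labels on edges with the data at the circle-vertices.

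First I would record the elementary fact that, after $y=e^{i\hbar}$, one has
\[
\frac{2\sin\frac{(d\cdot E)\hbar}{2}}{d\cdot E}=\frac{1}{\sqrt{-1}}\cdot\frac{y^{\frac{d\cdot E}{2}}-y^{-\frac{d\cdot E}{2}}}{d\cdot E/ ?}
\]
— more precisely $2\sin\frac{(d\cdot E)\hbar}{2}=\frac{1}{\sqrt{-1}}\bigl(y^{\frac{d\cdot E}{2}}-y^{-\frac{d\cdot E}{2}}\bigr)$, so each edge factor contributes one power of $\bigl(y^{a/2}-y^{-a/2}\bigr)$ (with $a=d_2(e)\cdot E$) divided by one copy of $\sqrt{-1}$. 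Next, each square-vertex factor $\W^{K_S}_{d^1(v_i)}$ lies in $\Q[\hbar^2][[\cdots]]$; under $y=e^{i\hbar}$ it becomes a Laurent series contributing no $\sqrt{-1}$ and no half-integer powers — it is manifestly even in $\hbar$, hence in $\Q[y^{\pm 1}]$ after the substitution (here I would invoke that $\W^{K_S}$ is a series in $\hbar^{2g-2}$, so even). Then, by Corollary \ref{cor:rat}, each circle-vertex factor $F^{E,\tw}_{d^0(v_i),\{d_2(e^i_1),\dots,d_2(e^i_{n_i})\}}$ lies in
\[
\Bigl(\prod_{k=1}^{n_i}\frac{y^{\frac{d_2(e^i_k)\cdot E}{2}}-y^{-\frac{d_2(e^i_k)\cdot E}{2}}}{\sqrt{-1}}\Bigr)\Q[y^{\pm1}].
\]

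The heart of the argument is then a bookkeeping of these half-integer-power factors and of the powers of $\sqrt{-1}$. For the half-integer powers: every edge $e$ of $T$ contributes one factor $\bigl(y^{\frac{d_2(e)\cdot E}{2}}-y^{-\frac{d_2(e)\cdot E}{2}}\bigr)$ from the product over $E_T$ in \eqref{eq:cont}. On the other hand, each circle-vertex $v_i$ \emph{absorbs} exactly one factor $\bigl(y^{\frac{d_2(e^i_k)\cdot E}{2}}-y^{-\frac{d_2(e^i_k)\cdot E}{2}}\bigr)$ for each \emph{non-root-directed} incident edge $e^i_k$, by Corollary \ref{cor:rat}. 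I would argue that the map sending each edge $e$ to the circle-vertex at its child-end (the end farther from the root) is a bijection between $E_T$ and $\bigsqcup_{v_i\in t^{-1}(0)}\{e^i_1,\dots,e^i_{n_i}\}$ — here one uses condition (1), that square-vertices are leaves and therefore have no \emph{outgoing} (away-from-root) edges so never appear as the child-end of an edge, and condition (3)'s setup identifying the $e^i_k$ as exactly the non-root-directed edges at $v_i$; the only edges not of the form $e^i_k$ would be edges whose child-end is a square vertex, but those are covered since a square leaf's unique edge is its root-directed one... I would double-check the edge case $|V_T|=1$ (no edges, single square-vertex, $t^{-1}(0)=\emptyset$, excluded by hypothesis) and the case where the root itself is a circle. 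Once this bijection is in hand, the $\bigl(y^{a/2}-y^{-a/2}\bigr)$-factors from the edges exactly cancel the ones demanded by the circle-vertices, leaving a genuine element of $\Q[y^{\pm1}]$ times the accumulated powers of $\sqrt{-1}^{\pm1}$.

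Finally, for the powers of $\sqrt{-1}$: each edge contributes $(\sqrt{-1})^{-1}$ (from $2\sin$), each circle-vertex $v_i$ contributes $(\sqrt{-1})^{-n_i}$ (the $n_i$ denominators in Corollary \ref{cor:rat}) and possibly more $\sqrt{-1}$'s hidden inside the $\Q[y^{\pm1}]$-part of $F$, but those are by definition already absorbed; the clean count is $(\sqrt{-1})^{-|E_T|}\cdot(\sqrt{-1})^{-\sum_i n_i}$, and since $\sum_i n_i=|E_T|$ by the same bijection, the total is $(\sqrt{-1})^{-2|E_T|}=(-1)^{-|E_T|}=\pm1\in\Q$. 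Combining: $\Cont_T$ equals a rational scalar times a product of honest Laurent polynomials in $y$, hence lies in $\Q[y,y^{-1}]$. The main obstacle I anticipate is getting the incidence bijection airtight — in particular confirming that the indexing of edges at each circle-vertex in condition (3) really does partition $E_T$ and that no $\sqrt{-1}$ or half-power is double-counted or missed, especially at the root vertex (which has no root-directed edge) and when a circle-vertex is a leaf; everything else is a routine substitution.
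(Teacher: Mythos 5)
Your accounting of the edge factors and the circle--vertex factors is essentially the paper's: each edge $e$ contributes one copy of $\bigl(y^{\frac{d_2(e)\cdot E}{2}}-y^{-\frac{d_2(e)\cdot E}{2}}\bigr)/\sqrt{-1}$ from the $2\sin$ term, and by Corollary \ref{cor:rat} the $F^{E,\tw}$ at the circle vertex at the \emph{parent} end of $e$ (not the child end, as you wrote --- the $e^i_1,\dots,e^i_{n_i}$ are the edges pointing \emph{away} from the root at $v_i$, and since squares are leaves every edge has a circle at its parent end) contributes a second such copy; the two multiply to $-\bigl(y^{d_2(e)\cdot E}-2+y^{-d_2(e)\cdot E}\bigr)\in\Z[y,y^{-1}]$, which disposes of both the half-integer powers and the powers of $\sqrt{-1}$. (You should also note, as the paper does, that Corollary \ref{cor:rat} is stated for the disconnected series $F^{E,\tw,\bullet}$ and passes to the connected $F^{E,\tw}$ by inclusion--exclusion.)

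However, there is a genuine gap in your treatment of the square vertices. You assert that $\W^{K_S}_{d^1(v_i)}$ is ``manifestly even in $\hbar$, hence in $\Q[y^{\pm1}]$ after the substitution.'' This is false on two counts: evenness in $\hbar$ has nothing to do with Laurent-polynomiality in $y=e^{i\hbar}$ (already $\hbar^2$ is even but is not a Laurent polynomial in $y$), and in fact $\W^{K_S}_\beta$ is \emph{not} in $\Q[y,y^{-1}]$ --- after the Gopakumar--Vafa resummation it is a rational function with poles at roots of unity, e.g.\ $\W^{K_{\PP^2}}_1=-3/(y^{1/2}-y^{-1/2})^2$. This is precisely the point where the hypothesis on the tree structure is needed: the paper first records the intermediate containment
\[
\Cont_{T}\in\Bigl(\prod_{e\in E_T}\bigl(y^{\frac{d_2(e)\cdot E}{2}}-y^{-\frac{d_2(e)\cdot E}{2}}\bigr)^2\prod_{v_i\in t^{-1}(1)}\W_{d^1(v_i)}^{K_S}\Bigr)\Q[y,y^{-1}],
\]
and then, for each square leaf $v_i$ with unique incident edge $e_0^i$, uses condition (3) (namely $d^1(v_i)=d_2(e_0^i)$) together with the Gopakumar--Vafa expansion $\W^{K_S}_{\beta}=\sum_{k\mid\beta}\frac{-1}{k(y^{k/2}-y^{-k/2})^2}\sum_g n_{g,\beta/k}(-1)^g(y^{k/2}-y^{-k/2})^{2g}$ to check that the squared edge factor clears the denominators $(y^{k/2}-y^{-k/2})^2$ for every $k\mid d^1(v_i)$. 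In your bookkeeping both half-power factors of each edge are consumed by the edge/parent-vertex pairing, so nothing is left to cancel the poles of $\W^{K_S}$ at the square leaves; without the GV-resummation step the argument does not close.
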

\begin{proof}
Recall that 
\[
\Cont_{T}=\frac{1}{|\Aut(T)|}
\prod_{e\in E_T}\frac{2\sin\frac{(d_2(e)\cdot E)\hbar}{2}}{d_2(e)\cdot E}\prod_{v_i\in t^{-1}(0)}\left(-F^{E,\tw}_{d^0(v_i),\{d_2(e_1^i),d_2(e_2^i),\cdots,d_2(e_{n_i}^i)\}}\right)\prod_{v_i\in t^{-1}(1)}\W^{K_S}_{d^1(v_i)}.
\]
By Corollary \ref{cor:rat}, we know that
\[F^{E,\tw,\bullet}_{d^0(v_i),\{d_2(e_1^i),d_2(e_2^i),\cdots,d_2(e_{n_i}^i)\}} \in \left(\prod_{k=1}^{n_i}\frac{y^{\frac{d_2(e_{k}^i)\cdot E}{2}}-y^{-\frac{d_2(e_{k}^i)\cdot E}{2}}}{\sqrt{-1}}\right)\mathbb{Q}[y,y^{-1}].\]
Since connected invariants can be determined from the disconnected invariants by inclusion and exclusion, we still have
\[F^{E,\tw}_{d^0(v_i),\{d_2(e_1^i),d_2(e_2^i),\cdots,d_2(e_{n_i}^i)\}} \in \left(\prod_{k=1}^{n_i}\frac{y^{\frac{d_2(e_{k}^i)\cdot E}{2}}-y^{-\frac{d_2(e_{k}^i)\cdot E}{2}}}{\sqrt{-1}}\right)\mathbb{Q}[y,y^{-1}]\]
for connected invariants. Combining with the fact that
\[2\sin\frac{(d_2(e)\cdot E)\hbar}{2}=\frac{y^{\frac{d_2(e)\cdot E}{2}}-y^{
\frac{-d_2(e)\cdot E}{2}}}{\sqrt{-1}},\quad y=e^{i\hbar}.\]
We may deduce that 
\begin{equation}\label{eqn:midcont}
\Cont_{T}\in\left(\prod_{e\in E}(y^{\frac{d_2(e)\cdot E}{2}}-y^{
\frac{-d_2(e)\cdot E}{2}})^2\prod_{v_i\in t^{-1}(1)}\W_{d^1(v_i)}^{K_S}\right)\Q[y,y^{-1}].
\end{equation}
If $t^{-1}(1)$ is empty, then we already see that 
\begin{equation*}
\Cont_{T}\in\Q[y,y^{-1}].
\end{equation*}
If $t^{-1}(1)$ is not empty, then by definition for each $v_i\in t^{-1}(1)$, there is a unique edge $e_0^i\in E$ incident to $v_i$. Then considering the factor
\[(y^{\frac{d_2(e_0^i)\cdot E}{2}}-y^{
\frac{-d_2(e_0^i)\cdot E}{2}})^2\W_{d^1(v_i)}^{K_S}\]
in \eqref{eqn:midcont}.
We can write $\W_{d^1(v_i)}^{K_S}$ in terms of  Gopakumar-Vafa invariants $n_{g,\beta}$ of $K_S$:
\begin{align*}
\W_{d^1(v_i)}^{K_S} &=\sum_{k|d^1(v_i)} \frac{1}{k(2\sin(k\hbar/2))^2}\sum_{g\geq 0}n_{g,d^1(v_i)/k}\left(2\sin(\frac{k\hbar}{2})\right)^{2g}\\
&=\sum_{k|d^1(v_i)} \frac{-1}{k(y^{\frac{k}{2}}-y^{-\frac{k}{2}})^2}\sum_{g\geq 0}n_{g,d^1(v_i)/k}(-1)^g(y^{\frac{k}{2}}-y^{-\frac{k}{2}})^{2g},\quad y=e^{i\hbar}.
\end{align*}
By requirement (3) in the definition of a labeled rooted tree, $d^1(v_i)=d_2(e_0^i)$, so:
\[(y^{\frac{d_2(e_0^i)\cdot E}{2}}-y^{
\frac{-d_2(e_0^i)\cdot E}{2}})^2\W_{d^1(v_i)}^{K_S}\in \Q[y,y^{-1}].\]
Hence,
\begin{equation*}
\Cont_{T}\in\Q[y,y^{-1}].
\end{equation*}
\end{proof}

\section{Applications of the all-genus local/relative correspondence} \label{sec:pfCGKT}
Recall that in Section \ref{Section:loc/rel}, we give a new way to package the all-genus local/relative correspondence. Here and in the next section, we present two applications of this correspondence.

\subsection{Proof of Theorem \ref{thm:main0}}
The first application establishes that Choi-van Garrel-Katz-Takahashi's conjecture follows from Bousseau's conjectural refined sheaves/Gromov-Witten correspondence. Since the refined sheaves/Gromov-Witten correspondence is known for $S=\mathbb{P}^2$, our result proves Choi-van Garrel-Katz-Takahashi's conjecture in this case.

To begin, we reformulate Choi-van Garrel-Katz-Takahashi's conjecture as follows:

\begin{conj}[=Conjecture \ref{conj:main0}]\label{conj:main}
$\Omega_{\beta}\left(y^{\frac{1}{2}}\right)$ can be divided by the shifted Poincar\'e polynomial of $\PP^{\beta\cdot E-1}$, i.e.,
\[\frac{\Omega_{\beta}\left(y^{\frac{1}{2}}\right)}{{\frac{\left(y^{\frac{1}{2}}\right)^{\beta\cdot E}-\left(y^{\frac{1}{2}}\right)^{-\beta\cdot E}}{y^{\frac{1}{2}}-y^{-\frac{1}{2}}}}}\]
is a palindromic Laurent polynomial of $y$.
\end{conj}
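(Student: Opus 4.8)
The plan is to prove Theorem~\ref{thm:main0}, i.e. that Conjecture~\ref{conj:rsh/GW} implies Conjecture~\ref{conj:main}, by induction on the curve class $\beta$ ordered by the integer $\beta\cdot E$. The engine is the ``crucial equality'' \eqref{eqn:crueqn}, which I first derive. Comparing $Q^{\beta}$-coefficients in $\bar{F}^{NS}=\bar{F}^{S/E}$ and using $\bar{F}^{S/E}=\sum_{\beta>0}\V^{S/E}_{\beta}(\hbar)\,Q^{\beta}$ gives $\V^{S/E}_{\beta}(\hbar)=\sqrt{-1}\sum_{k}\frac{1}{k^{2}}\frac{(-1)^{(\beta/k)^{2}+1}\Omega_{\beta/k}(y^{k/2})}{y^{k/2}-y^{-k/2}}$, the sum over those $k$ with $\beta/k$ effective. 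On the other hand Corollary~\ref{cor:rel/local} expresses $\V^{S/E}_{\beta}=\frac{2\sin((\beta\cdot E)\hbar/2)}{\beta\cdot E}\big(\W^{K_S}_{\beta}+\sum_{T\in\RT_{\beta},\,t^{-1}(0)\ne\emptyset}\Cont_{T}\big)$. Dividing the first expression by $\frac{2\sin((\beta\cdot E)\hbar/2)}{\beta\cdot E}$, substituting the Gopakumar--Vafa expansion of $\W^{K_S}_{\beta}$, cancelling the bare $\W^{K_S}_{\beta}$ against the $k$-sum of GV terms, and using $(y^{k/2})^{(\beta\cdot E)/k}=y^{(\beta\cdot E)/2}$, one lands exactly on \eqref{eqn:crueqn} after $y=e^{i\hbar}$.

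Write $H_{\gamma}(u)$ for the $k=1$ summand on the left of \eqref{eqn:crueqn} with $\beta$ replaced by $\gamma$; then the left side of \eqref{eqn:crueqn} equals $\sum_{k}\frac{1}{k}H_{\beta/k}(y^{k/2})$, and the ``even stronger result'' in the introduction is precisely the claim $H_{\beta}(y^{1/2})\in\Z[y,y^{-1}]$. The right side of \eqref{eqn:crueqn} is a \emph{finite} sum of $\Cont_{T}$ over rooted trees with $t^{-1}(0)\ne\emptyset$, each lying in $\Q[y,y^{-1}]$ by Lemma~\ref{lem:key}. Now run the induction: assume $H_{\gamma}(y^{1/2})\in\Z[y,y^{-1}]$ for all effective $0<\gamma<\beta$ (the base case is $\beta$ of minimal $\beta\cdot E$, where no proper effective $\gamma$ exists). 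For each $k>1$ with $\beta/k$ effective, $H_{\beta/k}(y^{k/2})$ is obtained from an element of $\Z[y,y^{-1}]$ by the substitution $y\mapsto y^{k}$, so $\frac{1}{k}H_{\beta/k}(y^{k/2})\in\Q[y,y^{-1}]$; subtracting all these terms together with the (polynomial) right side of \eqref{eqn:crueqn} forces $H_{\beta}(y^{1/2})\in\Q[y,y^{-1}]$.

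It remains to upgrade coefficients from $\Q$ to $\Z$, which simultaneously closes the induction and yields Conjecture~\ref{conj:main}. From $H_{\beta}(y^{1/2})\in\Q[y,y^{-1}]$ and the definition of $H_{\beta}$ — multiplying by $(y^{1/2}-y^{-1/2})^{2}=y+y^{-1}-2$ and adding back the integral Gopakumar--Vafa term — one reads off that the shifted Poincar\'e polynomial of $\PP^{\beta\cdot E-1}$ divides $\Omega_{\beta}(y^{1/2})$ in $\Q[y,y^{-1}]$. Since $\Omega_{\beta}$ has integer coefficients and the divisor is, up to a monomial, the primitive polynomial $1+y+\dots+y^{\beta\cdot E-1}\in\Z[y]$, Gauss's lemma puts the quotient in $\Z[y,y^{-1}]$. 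This already gives Conjecture~\ref{conj:main} for $\beta$: the quotient is an integral Laurent polynomial in $y$, self-reciprocal under $y\mapsto y^{-1}$ because $\Omega_{\beta}$ is (Poincar\'e duality on the smooth $M_{\beta}$, together with vanishing of odd cohomology) and so is the divisor — hence a palindromic Laurent polynomial of $y$. Feeding $\Omega_{\beta}(y^{1/2})/(\text{divisor})\in\Z[y,y^{-1}]$ back into the definition gives $(y+y^{-1}-2)\,H_{\beta}(y^{1/2})\in\Z[y,y^{-1}]$, and since a Laurent polynomial $H$ of finite support with $(y+y^{-1}-2)H$ integral necessarily has integer coefficients (walk the second differences down from the top of the support), $H_{\beta}(y^{1/2})\in\Z[y,y^{-1}]$, closing the induction. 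For $S=\PP^{2}$, Conjecture~\ref{conj:rsh/GW} is a theorem (\cite{Bou23} together with the $\chi$-independence of \cite{MS23,Y23}), so Conjecture~\ref{conj:main} holds there unconditionally.

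The genuinely hard ingredient lies upstream of this induction: it is Lemma~\ref{lem:key}, that every $\Cont_{T}$ with $t^{-1}(0)\ne\emptyset$ is a Laurent polynomial in $y$ after $y=e^{i\hbar}$. This rests on the closed TQFT evaluation of the stationary local-elliptic-curve series in Theorem~\ref{thm:keythm} (extending \cite{BP08} to stationary insertions) and on Corollary~\ref{cor:rat}, which supplies the crucial factor $\prod_{i}(y^{(\beta_{i}\cdot E)/2}-y^{-(\beta_{i}\cdot E)/2})$ that pairs with the edge factors $2\sin\frac{(d_{2}(e)\cdot E)\hbar}{2}$ and with the Gopakumar--Vafa denominators in $\W^{K_S}$ to absorb every half-integer power and every pole. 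Granting that, the only delicate point in the present argument is arithmetic: the induction must be run through $\Q[y,y^{-1}]$-divisibility and then bootstrapped back to integrality by Gauss's lemma and the second-difference observation, because the factors $1/k$ attached to the $k>1$ terms would otherwise block the induction from closing directly over $\Z$.
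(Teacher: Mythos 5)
Your proposal is correct and follows essentially the same route as the paper: derive equation \eqref{eqn:sheaf/GW} from Corollary \ref{cor:rel/local} plus Conjecture \ref{conj:rsh/GW}, use Lemma \ref{lem:key} for the tree contributions, induct on the class via the $k>1$ rescaled summands, and deduce integrality and palindromicity from the structure of the divisor $1+y+\dots+y^{\beta\cdot E-1}$. The only difference is presentational — you spell out the $\Q\to\Z$ upgrade (monic division and the second-difference argument) that the paper compresses into ``so actually'' and Remark \ref{rmk:div+}.
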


\begin{thm}[=Theorem \ref{thm:main0}]\label{thm:main}
Assuming Conjecture \ref{conj:rsh/GW} holds, then Conjecture \ref{conj:main} holds. In particular, Conjectures  \ref{conj:main} holds when $S=\PP^2$.
\end{thm}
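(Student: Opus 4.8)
The plan is to feed Bousseau's conjectural refined sheaves/Gromov--Witten correspondence (Conjecture \ref{conj:rsh/GW}) through the graph sum formula of Corollary \ref{cor:rel/local} to obtain the crucial equality \eqref{eqn:crueqn}, and then to argue by induction on the curve class that the left-hand side of \eqref{eqn:crueqn}, evaluated at $k=1$, lies in $\Z[y,y^{-1}]$ (in fact the stronger integrality statement displayed after \eqref{eqn:crueqn}). Once that is in hand, Conjecture \ref{conj:main} follows by unwinding definitions: if
\[
\frac{-1}{(y^{\frac{1}{2}}-y^{-\frac{1}{2}})^2} \left(\frac{(\beta\cdot E)(-1)^{\beta^2+1}\Omega_{\beta}\left(y^{\frac{1}{2}}\right)}{{\frac{\left(y^{\frac{1}{2}}\right)^{\beta\cdot E}-\left(y^{\frac{1}{2}}\right)^{-\beta\cdot E}}{y^{\frac{1}{2}}-y^{-\frac{1}{2}}}}}-\sum_{g\geq 0}n_{g,\beta}(-1)^g(y^{\frac{1}{2}}-y^{-\frac{1}{2}})^{2g}\right)\in \Z[y,y^{-1}],
\]
then, since the second term inside the parentheses is manifestly a Laurent polynomial, the first term must differ from an element of $(y^{\frac12}-y^{-\frac12})^2\Z[y,y^{-1}]$ by nothing, so in particular the ratio $\Omega_\beta/\frac{(y^{1/2})^{\beta\cdot E}-(y^{1/2})^{-\beta\cdot E}}{y^{1/2}-y^{-1/2}}$ is a Laurent polynomial; palindromy of this ratio then follows from the Poincar\'e duality palindromy of $\Omega_\beta$ itself together with the palindromy of the denominator.

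Concretely, first I would assemble \eqref{eqn:crueqn}. Starting from Corollary \ref{cor:rel/local}, $\V^{S/E}_\beta$ is expressed as a sum over $\RT_\beta$ of contributions $\Cont_T$, which are built from $\W^{K_S}$ and $F^{E,\tw}$. On the other hand Conjecture \ref{conj:rsh/GW} identifies $\bar F^{NS}$ with $\bar F^{S/E}$, which after extracting the $Q^\beta$-coefficient and using the multiple-cover structure of $\bar F^{NS}$ rewrites $\V^{S/E}_\beta$ (equivalently its $1/(2\sin)$-normalization) in terms of the shifted Poincar\'e polynomials $\Omega_{\beta/k}$. Equating the two expressions for $\V^{S/E}_\beta$, moving the $t^{-1}(0)=\emptyset$ tree (which contributes exactly $\W^{K_S}_\beta$, hence the Gopakumar--Vafa side via the standard BPS expansion of $\W^{K_S}_\beta$) to the left, and dividing out the common $2\sin\frac{(\beta\cdot E)\hbar}{2}/(\beta\cdot E)$ factor yields \eqref{eqn:crueqn}. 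The change of variables $y=e^{i\hbar}$ is used throughout.

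Next, the induction. The inductive hypothesis is that the displayed integrality statement holds for all effective classes $\beta'$ with $\beta'<\beta$; I also want, as part of the hypothesis, the weaker rationality statement that $\Omega_{\beta'}(y^{1/2})$ divided by the shifted Poincar\'e polynomial of $\PP^{\beta'\cdot E-1}$ is a Laurent polynomial, so that all the ``$k>1$'' terms on the left of \eqref{eqn:crueqn} and all the proper-subtree contributions are already controlled. By Lemma \ref{lem:key}, every $\Cont_T$ with $t^{-1}(0)\neq\emptyset$ lies in $\Q[y,y^{-1}]$, so the entire right-hand side of \eqref{eqn:crueqn} is a Laurent polynomial; moreover for $k>1$ the left-hand-side summands involve only $\Omega_{\beta/k}$ and $n_{g,\beta/k}$ with $\beta/k<\beta$, hence by induction those summands are Laurent polynomials as well. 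Isolating the $k=1$ summand shows it is a Laurent polynomial over $\Q$, and then one upgrades $\Q$ to $\Z$ using the integrality of the Gopakumar--Vafa invariants $n_{g,\beta}$ of $K_S$ (a known theorem for $S=\PP^2$, e.g.\ via the local BPS integrality results) together with the integrality of the Betti numbers appearing in $\Omega_\beta$ and a close look at the $(y^{1/2}-y^{-1/2})^{-2}$ denominator — the point being that the bracketed expression in \eqref{eqn:crueqn} is divisible by $(y^{1/2}-y^{-1/2})^2$ with integral quotient because both the $\Omega$-term and the GV-term individually have integral power series expansions agreeing to the needed order. Finally, specializing to $S=\PP^2$, Conjecture \ref{conj:rsh/GW} is a theorem of Bousseau \cite{Bou23} (using the now-established $\chi$-independence \cite{MS23,Y23}), so the hypothesis of Theorem \ref{thm:main} is satisfied and Conjecture \ref{conj:main} holds unconditionally there.

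The main obstacle I expect is the $\Q$-to-$\Z$ upgrade at the end of the induction: Lemma \ref{lem:key} and Corollary \ref{cor:rat} only give rationality of the tree contributions, and the refined correspondence only gives the identity \eqref{eqn:crueqn} over $\Q$, so getting genuine integrality of the bracketed Laurent polynomial on the left — not merely its being a polynomial — requires separately invoking integrality of $n_{g,\beta}$ and carefully checking that no denominators survive after the division by $(y^{1/2}-y^{-1/2})^2$; this is where the bulk of the honest work lies. A secondary technical point is bookkeeping the automorphism factors $1/|\Aut(\boldsymbol\beta)|$ and $1/|\Aut(T)|$ correctly when passing from the all-genus local/relative correspondence \eqref{eqn:loc/rel_origin} through Proposition \ref{prop:local/rel} to the tree sum, and ensuring the ``$t^{-1}(0)=\emptyset$'' tree is extracted with the right sign so that the Gopakumar--Vafa side of \eqref{eqn:crueqn} comes out exactly as stated.
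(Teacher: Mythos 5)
Your proposal is correct and follows essentially the same route as the paper: combine Conjecture \ref{conj:rsh/GW} with Corollary \ref{cor:rel/local} to obtain \eqref{eqn:sheaf/GW}, control the right-hand side by Lemma \ref{lem:key} and the $k>1$ terms by induction on $\beta$ (via the substitution $y\mapsto y^k$), and isolate the $k=1$ summand. The one place you overestimate the difficulty is the $\Q$-to-$\Z$ upgrade: for Conjecture \ref{conj:main} membership in $\Q[y,y^{-1}]$ already suffices, because once the ratio of $\Omega_\beta$ by the shifted Poincar\'e polynomial of $\PP^{\beta\cdot E-1}$ is known to be a Laurent polynomial over $\Q$, its integrality is immediate from the fact that $\Omega_\beta$ has integer coefficients and the denominator is monic (Gauss's lemma) --- which is exactly how the paper concludes, reserving the integrality of the full bracketed expression for Remark \ref{rmk:div+}.
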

\begin{proof}[Proof of Theorem \ref{thm:main}]
Recall by Corollary \ref{cor:rel/local} of the local/relative correspondence,    
\[\V^{S/E}_\beta=\frac{2\sin{\frac{(\beta\cdot E)\hbar}{2}}}{\beta\cdot E}\left(\W^{K_S}_\beta+\sum_{T\in \RT_\beta,\atop t^{-1}(0)\neq \emptyset} \Cont_{T}\right).\]
Assuming Conjecture \ref{conj:rsh/GW} holds, we have
\[\V^{S/E}_\beta=\sqrt{-1} \sum_{k|\beta} \frac{1}{k^2} \frac{(-1)^{\frac{\beta^2}{k^2}+1}\Omega_{\frac{\beta}{k}}\left(y^{\frac{k}{2}}\right)}{y^{\frac{k}{2}}-y^{-\frac{k}{2}}}. \]
Combining them, we get
\[\sum_{k|\beta} \frac{-1}{k(y^{\frac{k}{2}}-y^{-\frac{k}{2}})^2} \frac{\frac{\beta\cdot E}{k}(-1)^{\frac{\beta^2}{k^2}+1}\Omega_{\frac{\beta}{k}}\left(y^{\frac{k}{2}}\right)}{\frac{\left(y^{\frac{k}{2}}\right)^{\frac{\beta\cdot E}{k}}-\left(y^{\frac{k}{2}}\right)^{-\frac{\beta\cdot E}{k}}}{y^{\frac{k}{2}}-y^{-\frac{k}{2}}}}=\W_\beta^{K_S}+\sum_{T\in \RT_\beta,\atop t^{-1}(0)\neq \emptyset} \Cont_{T}.\]
We can write $\W_\beta^{K_S}$ in terms of Gopakumar-Vafa invariants $n_{g,\beta}$ of $K_S$:
\begin{align*}
\W_\beta^{K_S} &=\sum_{k|\beta} \frac{1}{k(2\sin(k\hbar/2))^2}\sum_{g\geq 0}n_{g,\frac{\beta}{k}}\left(2\sin(\frac{k\hbar}{2})\right)^{2g}\\
&=\sum_{k|\beta} \frac{-1}{k(y^{\frac{k}{2}}-y^{-\frac{k}{2}})^2}\sum_{g\geq 0}n_{g,\frac{\beta}{k}}(-1)^g(y^{\frac{k}{2}}-y^{-\frac{k}{2}})^{2g}.
\end{align*}
So after plugging into the previous equality, we get 
\begin{equation}\label{eqn:sheaf/GW}
\begin{split}
\sum_{k|\beta} \frac{-1}{k(y^{\frac{k}{2}}-y^{-\frac{k}{2}})^2} \left(\frac{\frac{\beta\cdot E}{k}(-1)^{\frac{\beta^2}{k^2}+1}\Omega_{\frac{\beta}{k}}\left(y^{\frac{k}{2}}\right)}{\frac{\left(y^{\frac{k}{2}}\right)^{\frac{\beta\cdot E}{k}}-\left(y^{\frac{k}{2}}\right)^{-\frac{\beta\cdot E}{k}}}{y^{\frac{k}{2}}-y^{-\frac{k}{2}}}}-\sum_{g\geq 0}n_{g,\frac{\beta}{k}}(-1)^g(y^{\frac{k}{2}}-y^{-\frac{k}{2}})^{2g}\right)&\\
=\sum_{T\in \RT_\beta,\atop t^{-1}(0)\neq \emptyset} \Cont_{T}.
\end{split}
\end{equation}
Note that each
\[ \frac{-1}{(y^{\frac{k}{2}}-y^{-\frac{k}{2}})^2} \left(\frac{\frac{\beta\cdot E}{k}(-1)^{\frac{\beta^2}{k^2}+1}\Omega_{\frac{\beta}{k}}\left(y^{\frac{k}{2}}\right)}{\frac{\left(y^{\frac{k}{2}}\right)^{\frac{\beta\cdot E}{k}}-\left(y^{\frac{k}{2}}\right)^{-\frac{\beta\cdot E}{k}}}{y^{\frac{k}{2}}-y^{-\frac{k}{2}}}}-\sum_{g\geq 0}n_{g,\frac{\beta}{k}}(-1)^g(y^{\frac{k}{2}}-y^{-\frac{k}{2}})^{2g}\right)\]
can be derived from
\[\frac{-1}{(y^{\frac{1}{2}}-y^{-\frac{1}{2}})^2} \left(\frac{(\beta\cdot E)(-1)^{\beta^2+1 }\Omega_{\beta}\left(y^{\frac{1}{2}}\right)}{{\frac{\left(y^{\frac{1}{2}}\right)^{\beta\cdot E}-\left(y^{\frac{1}{2}}\right)^{-\beta\cdot E}}{y^{\frac{1}{2}}-y^{-\frac{1}{2}}}}}-\sum_{g\geq 0}n_{g,\beta}(-1)^g(y^{\frac{1}{2}}-y^{-\frac{1}{2}})^{2g}\right)\]
by replacing $y$ to $y^k$ and $\beta$ to $\frac{\beta}{k}$, and each
\begin{equation*}
\Cont_{T}\in\Q[y,y^{-1}]
\end{equation*}
by Lemma \ref{lem:key}. We can easily show by induction that
\begin{equation*}
\frac{-1}{(y^{\frac{1}{2}}-y^{-\frac{1}{2}})^2} \left(\frac{(\beta\cdot E)(-1)^{\beta^2+1}\Omega_{\beta}\left(y^{\frac{1}{2}}\right)}{{\frac{\left(y^{\frac{1}{2}}\right)^{\beta\cdot E}-\left(y^{\frac{1}{2}}\right)^{-\beta\cdot E}}{y^{\frac{1}{2}}-y^{-\frac{1}{2}}}}}-\sum_{g\geq 0}n_{g,\beta}(-1)^g(y^{\frac{1}{2}}-y^{-\frac{1}{2}})^{2g}\right)\in \Q[y,y^{-1}].
\end{equation*}
It then follows that 
\[\frac{\Omega_{\beta}\left(y^{\frac{1}{2}}\right)}{{\frac{\left(y^{\frac{1}{2}}\right)^{\beta\cdot E}-\left(y^{\frac{1}{2}}\right)^{-\beta\cdot E}}{y^{\frac{1}{2}}-y^{-\frac{1}{2}}}}}\in \Q[y,y^{-1}].\]
But $\Omega_{\beta}\left(y^{\frac{1}{2}}\right)\in \mathbb{Z}[y^{ \frac{1}{2}},y^{-\frac{1}{2}}]$, so actually
\[\frac{\Omega_{\beta}\left(y^{\frac{1}{2}}\right)}{{\frac{\left(y^{\frac{1}{2}}\right)^{\beta\cdot E}-\left(y^{\frac{1}{2}}\right)^{-\beta\cdot E}}{y^{\frac{1}{2}}-y^{-\frac{1}{2}}}}}\in \Z[y,y^{-1}].\]
Notice that both $\Omega_{\beta}$ and $$\frac{\left(y^{\frac{1}{2}}\right)^{\beta\cdot E}-\left(y^{\frac{1}{2}}\right)^{-\beta\cdot E}}{y^{\frac{1}{2}}-y^{-\frac{1}{2}}}$$
are palindromic. So 
\[\frac{\Omega_{\beta}\left(y^{\frac{1}{2}}\right)}{{\frac{\left(y^{\frac{1}{2}}\right)^{\beta\cdot E}-\left(y^{\frac{1}{2}}\right)^{-\beta\cdot E}}{y^{\frac{1}{2}}-y^{-\frac{1}{2}}}}}\]
is palindromic as well.
\end{proof}

\begin{rmk}\label{rmk:div+}
From the proof, we actually have a stronger conclusion:
\[\frac{-1}{(y^{\frac{1}{2}}-y^{-\frac{1}{2}})^2} \left(\frac{(\beta\cdot E)(-1)^{\beta^2+1}\Omega_{\beta}\left(y^{\frac{1}{2}}\right)}{{\frac{\left(y^{\frac{1}{2}}\right)^{\beta\cdot E}-\left(y^{\frac{1}{2}}\right)^{-\beta\cdot E}}{y^{\frac{1}{2}}-y^{-\frac{1}{2}}}}}-\sum_{g\geq 0}n_{g,\beta}(-1)^g(y^{\frac{1}{2}}-y^{-\frac{1}{2}})^{2g}\right)\in \Z[y,y^{-1}].\]
Note that here $n_{g,\beta} = 0$ if  $g>g(\beta)\coloneqq\frac{1}{2} \beta (K_S+\beta)+1$.
\end{rmk}

\subsection{Further analysis of the graph sum formula}
Assuming the refined sheaves/ Gromov-Witten correspondence (Conjecture \ref{conj:rsh/GW}), equation \eqref{eqn:sheaf/GW} in the proof of Theorem \ref{thm:main} provides a new way to computing the Betti numbers of $M_{\beta}$ via Gromov-Witten invariants of $K_S$ and a local elliptic curve. 
We can reformulate the RHS of \eqref{eqn:sheaf/GW} as 
\begin{equation}\label{eqn:contfm}
\sum_{T\in \RT_\beta,\atop t^{-1}(0)\neq \emptyset} \Cont_{T}=\sum_{k=0}^{\infty}\sum_{d_E,\beta_1,\cdots,\beta_k>0 \atop d_E [E]+\sum_{i=1}^k \beta_i=\beta}\frac{1}{|\Aut\boldsymbol{\beta}|}G^E_{d_E,\boldsymbol{\beta}}\prod_{i=1}^k \W^{K_S}_{\beta_i} ,
\end{equation}
where each $G^E_{d_E,\boldsymbol{\beta}}$ is a sum:
\[\sum_T\frac{|\Aut\boldsymbol{\beta}|}{|\Aut(T)|}
\prod_{e\in E_T}\frac{2\sin\frac{(d_2(e)\cdot E)\hbar}{2}}{d_2(e)\cdot E}\prod_{v_i\in t^{-1}(0)}\left(-F^{E,\tw}_{d^0(v_i),\{d_2(e_1^i),d_2(e_2^i),\cdots,d_2(e_{n_i}^i)\}}\right)\]
which takes over all the rooted trees $T$ such that $t^{-1}(0)\neq \emptyset$, the set of
labels 
$$\{d^1(v_i)|v_i\in \V_T\}=\boldsymbol{\beta}=\{\beta_1,\cdots,\beta_k\},\quad \text{ and } \sum_{v_i\in V_T} d^0(v_i)=d_E.$$
By Corollary \ref{cor:rat}, each $G^E_{d_E,\boldsymbol{\beta}}$ is a palindromic Laurent polynomial of $y$ under the change of variables $y=e^{\sqrt{-1} \hbar}$. We have the following $y$-degree bounds for $G^E_{d_E,\boldsymbol{\beta}}$:
\begin{lem}\label{lem:degbd}
For $d_E>1$, we have
\begin{align*}
-\frac{(d_E-1)(d_E-2)K_S^2}{2}-(d_E-1)&\sum_{i=1}^k(-K_S\cdot \beta_i)\leq \deg_y(G^E_{d_E,\boldsymbol{\beta}})\\
&\leq \frac{(d_E-1)(d_E-2)K_S^2}{2}+(d_E-1)\sum_{i=1}^k(-K_S\cdot \beta_i).
\end{align*}
\end{lem}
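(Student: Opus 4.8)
The plan is to track the maximal and minimal $y$-degrees through the explicit formula in Corollary~\ref{cor:rel/local} and Theorem~\ref{thm:keythm}, using the combinatorial structure of the rooted trees contributing to $G^E_{d_E,\boldsymbol{\beta}}$. First I would fix a rooted tree $T$ contributing to $G^E_{d_E,\boldsymbol{\beta}}$ and expand $\Cont_T$ as in \eqref{eq:cont}: it is a product of edge factors $\tfrac{2\sin((d_2(e)\cdot E)\hbar/2)}{d_2(e)\cdot E}$ and vertex factors $-F^{E,\tw}_{d^0(v_i),\{d_2(e^i_j)\}}$ over vertices $v_i\in t^{-1}(0)$ (note that in $G^E_{d_E,\boldsymbol{\beta}}$ the $\W^{K_S}$-factors have been stripped off). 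Under $y=e^{i\hbar}$ each edge factor is $\pm(y^{a/2}-y^{-a/2})/\sqrt{-1}$ with $a=d_2(e)\cdot E$, which has $y$-degree $a/2$ and lowest degree $-a/2$. For the vertex factors I would invoke Theorem~\ref{thm:keythm}: $F^{E,\tw,\bullet}_{d_E,\boldsymbol{\beta}}$ is a sum over partitions $\rho$ of $d_E$ of $(-1)^{kd_E}e^{kc_\rho i\hbar}\prod_i(-\beta_i\cdot E)\tilde e(\rho,(\beta_i\cdot E)i\hbar)/\sqrt{-1}$, and I would bound the $y$-degree of each summand: $e^{kc_\rho i\hbar}$ contributes $y$-degree $kc_\rho$, while from the proof of Corollary~\ref{cor:rat} each $\tilde e(\rho,(\beta_i\cdot E)i\hbar)$ is a sum of terms $(y^{a_i/2}-y^{-a_i/2})\sum_{l}y^{a_i l}$ with $|l|$ bounded by $\max_k|m_k|$ over the box-content set $S(\rho)$; here $a_i=\beta_i\cdot E$.

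The key numerical inputs are then: (a) for a partition $\rho$ of $d_E$, the extreme value of $|c_\rho|$ is $\tfrac{(d_E-1)(d_E-2)}{2}\cdot$(something)? — actually the relevant extremes are $c_{(d_E)}=\binom{d_E}{2}$ and $c_{(1^{d_E})}=-\binom{d_E}{2}$ (the single row and single column), and (b) the extreme half-integers appearing in $\tilde e(\rho,z)$ again occur for these extremal partitions and give $\rho_1-1/2=d_E-1/2$ in the row case. Here I need to be careful: $k=K_S^2$ is the degree of the normal bundle and $-K_S\cdot\beta_i=\beta_i\cdot E$ since $E$ is anticanonical. I would combine these to see that for a single vertex labeled $0$ of degree $d_E$ with incident (non-root) edges of classes summing appropriately, the $y$-degree of its $F$-factor is bounded by $k\binom{d_E}{2}+(d_E-1/2-1/2)\sum_i(\beta_i\cdot E)$ roughly, but the constraint (3) in the tree definition — $d_2(e^i_0)=d^0(v_i)[E]+\sum_k d_2(e^i_k)$ — means the edge classes are linked to the vertex degrees, so summing contributions telescopes. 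The cleanest route is probably induction on the number of vertices of $T$, using the recursive structure from Proposition~\ref{prop:local/rel}: a tree with root of degree-label $d_E^{(0)}$ and subtrees $T_1,\dots,T_m$ (of degrees $\beta^{(j)}$) has $\deg_y\Cont_T$ bounded by $\deg_y F^{E,\tw}_{d_E^{(0)},\{\beta^{(j)}\cdot E\}}$ plus $\sum_j(\deg_y\Cont_{T_j}+\tfrac12\beta^{(j)}\cdot E)$ (the last from the connecting edge), and I would check the resulting bound is exactly $\tfrac{(d_E-1)(d_E-2)K_S^2}{2}+(d_E-1)\sum_i(-K_S\cdot\beta_i)$ where $d_E=\sum_v d^0(v)$.

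By the palindromicity of $G^E_{d_E,\boldsymbol{\beta}}$ (which follows from Corollary~\ref{cor:rat} and the fact that each factor is, up to sign, palindromic), the lower bound on $\deg_y$ is automatically the negative of the upper bound, so it suffices to establish the upper bound. I would also need to verify that the worst case is actually achieved — or rather just that it is not exceeded — so only the inequality is required, which simplifies matters. The main obstacle I anticipate is the bookkeeping in the inductive step: correctly accounting for how the edge factors $(y^{a/2}-y^{-a/2})$ interact with the $(y^{a/2}-y^{-a/2})$ already present in the $F$-factors from Corollary~\ref{cor:rat} (so that the stated bound, which has no $1/2$'s and a clean $(d_E-1)$ coefficient, comes out), and handling the sum over partitions $\rho$ so that the partition maximizing $c_\rho$ also controls the $\tilde e$-degrees — these may be different partitions, requiring a slightly lossy but still-sufficient estimate. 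A secondary subtlety is the $d_E=1$ versus $d_E>1$ distinction: for $d_E=1$ the only partition is $\rho=(1)$ with $c_\rho=0$, consistent with the bound being vacuous/trivial there, which is why the lemma is stated for $d_E>1$.
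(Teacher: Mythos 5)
There is a genuine gap: the proposed strategy bounds $\deg_y(\Cont_T)$ tree by tree (inducting on the number of vertices), but the stated bound is \emph{false} for individual trees. Already in the simplest case $\boldsymbol{\beta}=\emptyset$, the single-vertex tree with $d^0=d_E$ contributes $-F^{E,\tw}_{d_E,\emptyset}$, whose top $y$-degree is $K_S^2\,c_{(d_E)}=K_S^2\tbinom{d_E}{2}$ by \eqref{eqn:empty} (the single-row partition dominates, and the inclusion--exclusion passage to connected invariants only subtracts lower-degree terms). This exceeds the target $\tfrac{(d_E-1)(d_E-2)}{2}K_S^2$ by $(d_E-1)K_S^2>0$ for every $d_E\geq 2$ --- e.g.\ for $d_E=2$ the single vertex gives degree $K_S^2$ while $G^E_{2,\emptyset}=-3/2$ has degree $0$. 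You in fact notice this discrepancy yourself (``the extreme value of $|c_\rho|$ is $\tfrac{(d_E-1)(d_E-2)}{2}\cdot$(something)? --- actually \dots $\tbinom{d_E}{2}$'') but do not resolve it; a ``slightly lossy but still-sufficient estimate'' cannot help here because the loss goes in the wrong direction. The lemma holds only because of cancellations \emph{between} the contributions of different trees in the sum defining $G^E_{d_E,\boldsymbol{\beta}}$, so no argument that bounds each $\Cont_T$ separately can succeed.

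The paper's proof is built precisely to capture this cancellation: it encodes the sum over all rooted trees in the exponential identity \eqref{eqn:treeid}, rewrites it as the box-content product formula \eqref{eqn:treeid2} using $\tilde e(\rho,z)(e^{z/2}-e^{-z/2})=\sum_{\square\in\rho}(e^{(c(\square)+1)z}-2e^{c(\square)z}+e^{(c(\square)-1)z})$, and then inducts on $d_E$, separating the partitions with $\geq 2$ parts (which satisfy the bound with room to spare) from the one-row terms, which are controlled by a further recursion in $H^1_d$ under the substitution $q\mapsto qy^{-K_S}$. If you want to salvage your approach, you would need to work with the generating function of $G^E_{d_E,\boldsymbol{\beta}}$ over all $d_E$ and $\boldsymbol\beta$ simultaneously rather than with individual trees; the per-vertex and per-edge degree bookkeeping you set up (contents $c_\rho$, the half-integers in $S_\pm(\rho)$, the $\tfrac{a}{2}$ from each edge factor) is correct as far as it goes, but it only yields the weaker bound with $\tbinom{d_E}{2}$ in place of $\tbinom{d_E-1}{2}$.
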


\begin{proof}
Observe that any rooted tree can be constructed by starting with a star-shaped tree and attaching rooted trees to its leaves. Using this recursive structure alongside Theorem \ref{thm:keythm}, we can derive the following equation for 
$G^E_{d_E,\boldsymbol{\beta}}$:
\begin{align}
& & 1  +\sum_{\rho} (-1)^{K_S^2 |\rho|} q^{|\rho|\cdot[E]} y^{K_S^2 c_{\rho}}\exp\Bigg(  \sum_{s\geq 0}\sum_{d_E\geq 0\atop \beta_1,\cdots,\beta_s>0}G_{d_E,\boldsymbol{\beta}}^E \cdot e_\rho(d_E,\boldsymbol{\beta})  q^{d_E[E]+\sum_{i=1}^s\beta_i}\frac{\prod_{i=1}^s p_{\beta_i}}{s!}\Bigg)\nonumber\\
& & = \exp\Bigg(-\sum_{s\geq 0}\sum_{d_E> 0\atop \beta_1,\cdots,\beta_s>0}G_{d_E,\boldsymbol{\beta}}^E q^{d_E[E]+\sum_{i=1}^s\beta_i}\frac{\prod_{i=1}^s p_{\beta_i}}{s!}\Bigg).
\label{eqn:treeid} \end{align}
Here $q$ and $p_{\beta}$ are formal variables, with $p_{\beta}$ indexed by nonzero effective curve classes $\beta$ of $S$. The exponents of $q$ are taken from the monoid of effective curve classes of $S$. The sum takes over all the partitions $\rho$. $e_\rho(d_E,\boldsymbol{\beta})$ is defined as
$$
e_\rho(d_E,\boldsymbol{\beta}) \coloneqq \tilde{e} (\rho,z )\cdot (e^{\frac{z}{2}}-e^{-\frac{z}{2}}) |_{z = (d_E K_S^2-\sum_{i=1}^s K_S\cdot \beta_i)\sqrt{-1}\hbar},
$$
where $\tilde{e} (\rho,z )$ is given by \eqref{eqn:tildeE}. We treat $e_\rho(d_E,\boldsymbol{\beta})$ as a Laurent polynomial of $y$, using the substitution $y=e^{i\hbar}$. Additionally, we define $G_{0,\boldsymbol{\beta}}^E$ as follows:
\[G_{0,\boldsymbol{\beta}}^E=
\begin{cases}
1, \text{ if }s=1,\\
0, \text{ otherwise,}
\end{cases}
\]
where $s$ is the cardinality of the set $\boldsymbol{\beta}=\{\beta_1,\cdots,\beta_s\}$.

One can easily deduce from \eqref{eqn:treeid} that
\begin{equation}\label{eqn:dE1fm}
G_{1,\boldsymbol{\beta}}^E=-(-1)^{K_S^2}\prod_{i=1}^s\Big(y^{-\frac{K_S\cdot\beta_i}{2}}-y^{\frac{K_S\cdot\beta_i}{2}}\Big)^2
\end{equation}
and 
\[G_{2,\boldsymbol{\beta}}^E=-3\cdot 2^{s-1}\prod_{i=1}^s\Big(y^{-\frac{K_S\cdot\beta_i}{2}}-y^{\frac{K_S\cdot\beta_i}{2}}\Big)^2.\]
The above computation shows that Lemma \ref{lem:degbd} holds for $d_E=2$ and $\forall k$.
In order to prove the lemma, we will perform induction on integers $d_E$. Assume that the lemma holds for $ 2\leq d_E<d$ and $\forall k$. We want to prove that 
\begin{align*}
-\frac{(d_E-1)(d_E-2)K_S^2}{2}-(d_E-1)&\sum_{i=1}^k(-K_S\cdot \beta_i)\leq \deg_y(G^E_{d_E,\boldsymbol{\beta}})\\
&\leq \frac{(d_E-1)(d_E-2)K_S^2}{2}+(d_E-1)\sum_{i=1}^k(-K_S\cdot \beta_i).
\end{align*}
holds for $d_E=d$ and $\forall k$. Since $G^E_{d,\boldsymbol{\beta}}$ is invariant under the change of $y\rightarrow y^{-1}$, we only need to prove that 
\begin{equation}\label{eqn:iduceq}
\deg_y(G^E_{d,\boldsymbol{\beta}})\leq \frac{(d-1)(d-2)K_S^2}{2}+(d-1)\sum_{i=1}^k(-K_S\cdot \beta_i)
\end{equation}
holds for $\forall k$. Let 
\[H(q,p_{\beta_i},y)\coloneqq\exp\Bigg(-\sum_{s\geq 0}\sum_{d_E> 0\atop \beta_1,\cdots,\beta_s>0}G_{d_E,\boldsymbol{\beta}}^E q^{d_E[E]+\sum_{i=1}^s\beta_i}\frac{\prod_{i=1}^s p_{\beta_i}}{s!}\Bigg)\]
and 
\[L(q,p_{\beta_i})\coloneqq \exp\Bigg(\sum_{\beta_i>0} q^{\beta_i}p_{\beta_i}\Bigg),\qquad I(q,p_{\beta_i},y)\coloneqq \frac{H}{L}.\]
Then using the equality:
\[\tilde{e} (\rho,z )\cdot (e^{\frac{z}{2}}-e^{-\frac{z}{2}})=\sum_{\square\in\rho}\left(e^{(c(\square)+1)z}-2e^{c(\square)z}+e^{(c(\square)-1)z}\right),\]
we could rewrite \eqref{eqn:treeid} as
\begin{equation}\label{eqn:treeid2}
\begin{split}
1+\sum_{\rho}\prod_{\square\in\rho}(-1)^{K_S^2}q^{[E]}y^{c(\square)K_S^2}\frac{I(qy^{-c(\square)K_S},p_{\beta_i},y)^2}{I(qy^{-(c(\square)+1)K_S},p_{\beta_i},y)I(qy^{-(c(\square)-1)K_S},p_{\beta_i},y)}&\\
=H(q,p_{\beta_i},y).
\end{split}
\end{equation}
Here each partition $\rho$ corresponds to a Young diagram. For a box $\square$ located at position $(i,j)$ within a Young diagram, the content is defined as $c(\square)=j-i$. When substituting the variable
$q\rightarrow qy^{nK_S}$ with $n\in \Z$, it is important to interpret this substitution carefully: the exponent of $q$ is always an effective curve class $\beta$. After performing the substitution $q\rightarrow qy^{nK_S}$, the term $q^{\beta}$ transforms into $q^{\beta}y^{n(K_S\cdot\beta)}$.

By induction, it is easy to see that inequality
\eqref{eqn:iduceq} is equivalent to 
\begin{equation}\label{eqn:iduceq2}
\deg_y\Big(\Big[q^{d[E]}\prod_{i=1}^k q^{\beta_i}p_{\beta_i}\Big]H(q,p_{\beta_i},y)\Big)\leq \frac{(d-1)(d-2)K_S^2}{2}+(d-1)\sum_{i=1}^k(-K_S\cdot \beta_i)
\end{equation}
where $\Big[q^{d[E]}\prod_{i=1}^k q^{\beta_i}p_{\beta_i}\Big]$ means we take the $q^{d[E]}\prod_{i=1}^k q^{\beta_i}p_{\beta_i}$-coefficient of $H$. The inequality \eqref{eqn:iduceq2} can be derived
by a detailed analysis of the LHS of \eqref{eqn:treeid2} as follows:

Given a partition $\rho=(\rho_1\geq \rho_2\geq \cdot \geq \rho_l)$, one could easily check by induction that if $l\geq 2$, i.e., $\rho $ contains at least 2 parts, then the $y$-degree of the $q^{d[E]}\prod_{i=1}^k q^{\beta_i}p_{\beta_i}$-coefficient in the summand
\[\prod_{\square\in\rho}(-1)^{K_S^2}q^{[E]}y^{c(\square)K_S^2}\frac{I(qy^{-c(\square)K_S},p_{\beta_i},y)^2}{I(qy^{-(c(\square)+1)K_S},p_{\beta_i},y)I(qy^{-(c(\square)-1)K_S},p_{\beta_i},y)}\]
on the LHS of \eqref{eqn:treeid2} is at most 
\[\frac{(d-1)(d-2)K_S^2}{2}-K_S^2+(d-1)\sum_{i=1}^k(-K_S\cdot \beta_i).\]
So we only need to show that the $y$-degree of the $q^{d[E]}\prod_{i=1}^k q^{\beta_i} p_{\beta_i}$-coefficient in the sum
\begin{equation}\label{eqn:1part}
\sum_{m=1}^d\prod_{\square\in (m)}(-1)^{K_S^2}q^{[E]}y^{c(\square)K_S^2}\frac{I(qy^{-c(\square)K_S},p_{\beta_i},y)^2}{I(qy^{-(c(\square)+1)K_S},p_{\beta_i},y)I(qy^{-(c(\square)-1)K_S},p_{\beta_i},y)}
\end{equation}
is at most 
\[\frac{(d-1)(d-2)K_S^2}{2}+(d-1)\sum_{i=1}^k(-K_S\cdot \beta_i)\]
where $(m)$ stands for the partition contains only one part $m$. We use $H_d^1(q,p_{\beta_i},y)$ to denote \eqref{eqn:1part} plus one. Then we can rewrite \eqref{eqn:1part} as 
\[(-1)^{K_S^2}q^{[E]}\frac{I(q,p_{\beta_i},y)^2}{I(qy^{K_S},p_{\beta_i},y)I(qy^{-K_S},p_{\beta_i},y)} H_{d-1}^1(qy^{-K_S},p_{\beta_i},y).\]
We use $I_d$ (resp. $H_d$ and $L_d$) to denote a truncation of $I$ (resp. $H$ and $L$) which contains terms in $I$ (resp. $H$ and $L$) whose $q$-degrees are at most $d[E]$ after a replacement of $p_{\beta_i}$ by $q^{-\beta_i}p_{\beta_i}$\footnote{After the replacement of $p_{\beta_i}$ by $q^{-\beta_i}p_{\beta_i}$, the power of $z$ is always in the form of $m[E]$, $m\in\Z_{\geq 0}$.}. So we only need to show that  the $y$-degree of the $q^{(d-1)[E]}\prod_{i=1}^k q^{\beta_i} p_{\beta_i}$-coefficient in
\[
\frac{I_{d-1}(q,p_{\beta_i},y)^2L_{d-1}(qy^{-K_S},p_{\beta_i},y)}{I_{d-1}(qy^{K_S},p_{\beta_i},y)}\frac{H_{d-1}^1(qy^{-K_S},p_{\beta_i},y)}{H_{d-1}(qy^{-K_S},p_{\beta_i},y)} 
\]
is at most 
\[\frac{(d-1)(d-2)K_S^2}{2}+(d-1)\sum_{i=1}^k(-K_S\cdot \beta_i).\]
This follows from the induction and the fact that $H_{d-1}(qy^{-K_S},p_{\beta_i},y)$ can be written as
$$H_{d-1}^1(qy^{-K_S},p_{\beta_i},y)+H_{d-1}^2(qy^{-K_S},p_{\beta_i},y)$$
where the $y$-degree of the $q^{m[E]}\prod_{i=1}^k q^{\beta_i} p_{\beta_i}$-coefficient in $H_{d-1}^2(qy^{-K_S},p_{\beta_i},y)$ is at most 
\[\frac{m(m-1)K_S^2}{2}+m\sum_{i=1}^k(-K_S\cdot \beta_i),\qquad m\leq d-1.\]
The latter $y$-degree bound follows from the fact that $H_{d-1}^2(q,p_{\beta_i},y)$ is a sum over partitions $\rho$ contains at least 2 parts and $|\rho|\leq d-1$, i.e.,
\[\sum_{\rho=(\rho_1\geq \cdots \geq\rho_l)\atop l\geq 2, |\rho|\leq d-1}\prod_{\square\in\rho}(-1)^{K_S^2}q^{[E]}y^{c(\square)K_S^2}\frac{I(qy^{-c(\square)K_S},p_{\beta_i},y)^2}{I(qy^{-(c(\square)+1)K_S},p_{\beta_i},y)I(qy^{-(c(\square)-1)K_S},p_{\beta_i},y)}.\]
\end{proof}

\section{A closed formula for leading Betti numbers}\label{sec:Leading}

In this section, we aim to give a closed formula for the leading Betti numbers when $S=\PP^2$ since the refined sheaves/Gromov-Witten correspondence is known to hold in this case. 
Specifically, we use $\Omega^{\mathbb P^2}_d$ to denote the shifted Poincar\'e polynomial of the moduli space of Gieseker stable sheaves on $\PP^2$ supported on curves with degree $d\in \Z_{>0}$ and Euler characteristic $1$.

\subsection{Degree analysis}\label{subsec:degbd}
We set
\[\mathcal  F_d(y)\coloneqq\sum_{g\geq 0}n_{g,d}(-1)^g(y^{\frac{1}{2}}-y^{-\frac{1}{2}})^{2g}\]
where $n_{g,d}$ are the genus $g$, degree $d$ Gopakumar-Vafa invariants of local $\PP^2$. Note that for fixed $d$,
\[n_{g,d}=0,\,\text{ if }g>g(d)\coloneqq\frac{(d-1)(d-2)}{2}. \]
Using Lemma \ref{lem:degbd}, we could deduce the following degree bound:
\begin{prop}\label{prop:degbd1}
Let 
$$
\mathcal X_d\coloneqq \frac{3d (-1)^{d^2+1}\Omega_{d}\left(y^{\frac{1}{2}}\right)}{{\frac{y^{\frac{3d}{2}} -y^{-\frac{3d}{2}}}{y^{\frac{1}{2}}-y^{-\frac{1}{2}}}}}- \mathcal  F_d(y) - \big(y^{\frac{3(d-3) }{2}}-y^{-\frac{3(d-3) }{2}}\big)^2 \cdot  \mathcal F_{d-3}(y).$$
Then for $d\geq 5$,
$$
\mathcal X_d \in  \  \Z[y,y^{-1}]_{ -g(d) +d-4 \leq \deg \leq g(d)-d+4 }.
$$
\end{prop}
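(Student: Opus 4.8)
Write $\mathcal{Y}_d:=\dfrac{3d(-1)^{d^2+1}\Omega_{d}(y^{1/2})}{(y^{3d/2}-y^{-3d/2})/(y^{1/2}-y^{-1/2})}-\mathcal{F}_d(y)$, so that $\mathcal{X}_d=\mathcal{Y}_d-\big(y^{3(d-3)/2}-y^{-3(d-3)/2}\big)^2\mathcal{F}_{d-3}$, and set $\widetilde{\mathcal{Y}}_d:=-\mathcal{Y}_d/(y^{1/2}-y^{-1/2})^2$. The plan is to handle the three assertions of the statement separately: that $\mathcal{X}_d\in\Z[y,y^{-1}]$, that $\mathcal{X}_d$ is palindromic, and that $\deg_y\mathcal{X}_d\le g(d)-d+4$; the lower degree bound then follows from palindromy. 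The first is formal: Remark \ref{rmk:div+} (which holds unconditionally for $\PP^2$) gives $\widetilde{\mathcal{Y}}_d\in\Z[y,y^{-1}]$, hence $\mathcal{X}_d=-(y-2+y^{-1})\widetilde{\mathcal{Y}}_d-(y^{3(d-3)}-2+y^{-3(d-3)})\mathcal{F}_{d-3}\in\Z[y,y^{-1}]$. Palindromy is immediate since $\Omega_d$, the quotient $(y^{3d/2}-y^{-3d/2})/(y^{1/2}-y^{-1/2})$, $\mathcal{F}_{d-3}$ and $\big(y^{3(d-3)/2}-y^{-3(d-3)/2}\big)^2$ are all invariant under $y\mapsto y^{-1}$.

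For the degree bound I would use the graph sum. Specializing \eqref{eqn:sheaf/GW}/\eqref{eqn:contfm} to $S=\PP^2$ gives a \emph{finite} identity of rational functions of $y$,
\begin{equation*}
\widetilde{\mathcal{Y}}_d=\sum_{\substack{n\ge0,\ d_E\ge1,\ d_i\ge1\\ 3d_E+\sum_i d_i=d}}\frac{1}{|\Aut\boldsymbol d|}\,G^E_{d_E,\boldsymbol d}\prod_{i}\W^{K_{\PP^2}}_{d_i}\;-\;\sum_{k\mid d,\ k>1}\frac1k\,\widetilde{\mathcal{Y}}_{d/k}(y^k),
\end{equation*}
and I would estimate, term by term, the top $y$-degree of $(y^{1/2}-y^{-1/2})^2$ times the right-hand side plus the correction $\big(y^{3(d-3)/2}-y^{-3(d-3)/2}\big)^2\mathcal{F}_{d-3}$ — which is exactly $-\mathcal{X}_d$. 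All degrees are computed via Laurent expansion at $y=\infty$, where $\W^{K_{\PP^2}}_{m}=\sum_{k\mid m}\tfrac{-y^k\mathcal{F}_{m/k}(y^k)}{k(y^k-1)^2}$ has top degree exactly $g(m)-1$. The four cases: (i) the $(d_E,n)=(1,1)$ summand equals $\big(y^{3(d-3)/2}-y^{-3(d-3)/2}\big)^2\W^{K_{\PP^2}}_{d-3}$ by \eqref{eqn:dE1fm}, and its $k=1$ piece is precisely $-\big(y^{3(d-3)/2}-y^{-3(d-3)/2}\big)^2\mathcal{F}_{d-3}$, which cancels the correction; what survives are only the multiple-cover pieces of $\W^{K_{\PP^2}}_{d-3}$, whose degree an elementary computation bounds by $g(d)-d+4$. (ii) For $d_E=1,\ n\ge2$ one has $G^E_{1,\boldsymbol d}=\prod_i\big(y^{3d_i/2}-y^{-3d_i/2}\big)^2$ of degree $3(d-3)$, and using convexity of $g(m)=\binom{m-1}{2}$ with $\sum_i d_i=d-3$, the top degree is maximized at $n=2$, where it equals $g(d)-d+4$. (iii) For $d_E\ge2$, Lemma \ref{lem:degbd} gives $\deg_y G^E_{d_E,\boldsymbol d}\le\tfrac{9(d_E-1)(d_E-2)}2+3(d_E-1)\sum_i d_i$, which together with the top degrees of the $\W^{K_{\PP^2}}_{d_i}$ and $\sum_i g(d_i)\le g(\sum_i d_i)$ gives, after a short calculation, a bound of $g(d)-d+4-\big(2\sum_i d_i+6d_E-5\big)<g(d)-d+4$. (iv) For the multiple-cover terms, $\deg_y\mathcal{Y}_{d'}\le g(d')$ (since $\dim M_{d'}=d'^2+1$ and $n_{g,d'}=0$ for $g>g(d')$), hence $\deg_y\widetilde{\mathcal{Y}}_{d'}\le g(d')-1$, so $(y^{1/2}-y^{-1/2})^2\widetilde{\mathcal{Y}}_{d/k}(y^k)$ has top degree $\le1+k(g(d/k)-1)<g(d)-d+4$ for $k\ge2$. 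Collecting, $\deg_y\mathcal{X}_d\le g(d)-d+4$, and palindromy yields the full statement.

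The main obstacle — and the reason for the whole detour through the packages $G^E_{d_E,\boldsymbol d}$ — is that this estimate cannot be run tree by tree: many individual contributions $\Cont_T$ (for instance the ``spine'' trees carrying a single square leaf) have $y$-degree as large as $g(d)$, and the bound $g(d)-d+4$ only emerges after one reorganizes the trees into the $G^E_{d_E,\boldsymbol d}$'s, whose improved degree bounds in Lemma \ref{lem:degbd} record the internal cancellation among trees. The second delicate point is the bookkeeping around the unique package $(d_E,n)=(1,1)$, which is the only one still reaching degree $g(d)$: one must verify that its leading part is exactly annihilated by the correction $\big(y^{3(d-3)/2}-y^{-3(d-3)/2}\big)^2\mathcal{F}_{d-3}$ built into the definition of $\mathcal{X}_d$ — this is precisely why the statement subtracts $\mathcal{F}_{d-3}$ at all, and why the bound improves from $g(d)$ to $g(d)-d+4$.
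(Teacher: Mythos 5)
Your proposal is correct and follows essentially the same route as the paper: it multiplies the identity \eqref{eqn:sheaf/GW}--\eqref{eqn:contfm} by $(y^{\frac{1}{2}}-y^{-\frac{1}{2}})^2$, bounds the packaged contributions $G^E_{d_E,\boldsymbol d}\prod_i\W^{K_{\PP^2}}_{d_i}$ via Lemma \ref{lem:degbd} for $d_E\geq 2$ and via \eqref{eqn:dE1fm} for $d_E=1$, handles the multiple-cover terms by the elementary estimate $\frac{d^2}{2k}-\frac{3d}{2}+1$, and identifies the unique over-threshold piece (the $k=1$ part of the $(d_E,n)=(1,1)$ summand) as exactly the $\big(y^{\frac{3(d-3)}{2}}-y^{-\frac{3(d-3)}{2}}\big)^2\mathcal F_{d-3}$ correction built into $\mathcal X_d$. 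The case breakdown, the role of the convexity inequality $\sum_{i<j}d_id_j\geq d-4$, and the reliance on palindromy for the lower bound all match the paper's proof, so no further comparison is needed.
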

\begin{proof}
By Theorem \ref{thm:main}, it is obvious that $\mathcal X_d\in  \Z[y,y^{-1}]$. Furthermore, $\mathcal X_d$ is invariant under the change of $y\rightarrow y^{-1}$. So we only need to show the upper bound, i.e.,
\begin{equation}\label{eqn:mainineq1}
\deg_y(\mathcal X_d)\leq g(d)-d+4,\quad \text{ for } d\geq 5.
\end{equation}

Combining equations \eqref{eqn:sheaf/GW} and \eqref{eqn:contfm}, we have
\begin{equation}\label{eqn:key2bd}
\begin{aligned}
\ & \sum_{k|d} \frac{(y^{\frac{1}{2}}-y^{-\frac{1}{2}})^2}{k(y^{\frac{k}{2}}-y^{-\frac{k}{2}})^2} \left(\frac{\frac{3d}{k}(-1)^{\frac{d^2}{k^2}+1}\Omega_{\frac{d}{k}}\left(y^{\frac{k}{2}}\right)}{\frac{\left(y^{\frac{k}{2}}\right)^{\frac{3d}{k}}-\left(y^{\frac{k}{2}}\right)^{-\frac{3d}{k}}}{y^{\frac{k}{2}}-y^{-\frac{k}{2}}}}-\sum_{g\geq 0}n_{g,\frac{d}{k}}(-1)^g(y^{\frac{k}{2}}-y^{-\frac{k}{2}})^{2g}\right)\\
= \ & \sum_{k=0}^{\infty}\sum_{d_E,d_1,\cdots,d_k>0 \atop 3d_E +\sum_{i=1}^k d_i=d}\frac{-(y^{\frac{1}{2}}-y^{-\frac{1}{2}})^2}{|\Aut\{d_1,\cdots,d_k\}|}G^E_{d_E,\{d_1,\cdots,d_k\}}\prod_{i=1}^k \W^{K_{\PP^2}}_{d_i}.
\end{aligned}
\end{equation}
after further multiplying both sides by $-(y^{\frac{1}{2}}-y^{-\frac{1}{2}})^2$. Note that if $k=1$, then the summand on the LHS of \eqref{eqn:key2bd} becomes
\[\frac{3d (-1)^{d^2+1}\Omega_{d}\left(y^{\frac{1}{2}}\right)}{{\frac{y^{\frac{3d}{2}} -y^{-\frac{3d}{2}}}{y^{\frac{1}{2}}-y^{-\frac{1}{2}}}}}- \mathcal  F_d(y).\]
If $k> 1$, then by Remark \ref{rmk:div+}, the summand
\[ \frac{(y^{\frac{1}{2}}-y^{-\frac{1}{2}})^2}{k(y^{\frac{k}{2}}-y^{-\frac{k}{2}})^2} \left(\frac{\frac{3d}{k}(-1)^{\frac{d^2}{k^2}+1}\Omega_{\frac{d}{k}}\left(y^{\frac{k}{2}}\right)}{\frac{\left(y^{\frac{k}{2}}\right)^{\frac{3d}{k}}-\left(y^{\frac{k}{2}}\right)^{-\frac{3d}{k}}}{y^{\frac{k}{2}}-y^{-\frac{k}{2}}}}-\sum_{g\geq 0}n_{g,\frac{d}{k}}(-1)^g(y^{\frac{k}{2}}-y^{-\frac{k}{2}})^{2g}\right)\in\Z[y,y^{-1}].\]
The $y$-degree upper bound of the above summand can be easily computed:
\[\frac{d^2}{2k}-\frac{3d}{2}+1.\]
One can easily verify that if $k\geq 2$, then
\begin{equation}\label{eqn:k2ineq}
\frac{d^2}{2k}-\frac{3d}{2}+1\leq g(d)-d+1<g(d)-d+4,\quad \forall d\in \Z_{>0}.
\end{equation}
For each summand on the RHS of \eqref{eqn:key2bd}, 
\[\frac{-(y^{\frac{1}{2}}-y^{-\frac{1}{2}})^2}{|\Aut\{d_1,\cdots,d_k\}|}G^E_{d_E,\{d_1,\cdots,d_k\}}\prod_{i=1}^k \W^{K_{\PP^2}}_{d_i}\in\Q[y,y^{-1}]\]
by Lemma \ref{lem:key}. If $d_E>1$, then by Lemma \ref{lem:degbd}, we have
\begin{align*}
     & \deg_y\Big((y^{\frac{1}{2}}-y^{-\frac{1}{2}})^2G^E_{d_E,\{d_1,\cdots,d_k\}}\prod_{i=1}^k \W^{K_{\PP^2}}_{d_i}\Big)\\
 \leq \ & 1+\frac{9(d_E-1)(d_E-2)}{2}+(d_E-1)\sum_{i=1}^k (3d_i)+\sum_{i=1}^k\Big(\frac{d_i^2}{2}-\frac{3d_i}{2}\Big)\\
 =   \  & \frac{(3d_E+\sum_{i=1}^k d_i)^2}{2}-\frac{9(3d_E+\sum_{i=1}^k d_i)}{2}+10-\sum_{1\leq i<j \leq k}d_id_j\\
 =  \  & g(d)-(3d-9+\sum_{1\leq i<j \leq k}d_id_j) \quad \text{ Here we use } 3d_E+\sum_{i=1}^k d_i=d\\
 <  \  & g(d)-d+4,\quad \text{if }d\geq 5.
\end{align*}
If $d_E=1$, then by \eqref{eqn:dE1fm}, one can easily compute that 
\begin{equation}\label{eqn:dE1deg}
\deg_y\Big((y^{\frac{1}{2}}-y^{-\frac{1}{2}})^2G^E_{1,\{d_1,\cdots,d_k\}}\prod_{i=1}^k \W^{K_{\PP^2}}_{d_i}\Big)=g(d)-\sum_{1\leq i<j \leq k}d_i d_j.
\end{equation}
Note that in this case, $\sum_{i=1}^k d_i=d-3$ and $d_i\in\Z_{>0}$, one can easily show that 
\[\sum_{1\leq i<j \leq k} d_i d_j\geq d-4,\,\text{ if }k\geq 2.\]
So the only summand on the RHS of \eqref{eqn:key2bd} whose $y$-degree is greater than $g(d)-d+4$ is the case $d_E=1$ and $k=1$, i.e.,
\begin{align*}
     & -(y^{\frac{1}{2}}-y^{-\frac{1}{2}})^2 G_{1,{d-3}}^E\W^{K_{\PP^2}}_{d-3}\\
     \stackrel{\text{by \eqref{eqn:dE1fm}}}{=} \  & \big(y^{\frac{3(d-3) }{2}}-y^{-\frac{3(d-3) }{2}}\big)^2\sum_{k|(d-3)}\frac{(y^{\frac{1}{2}}-y^{-\frac{1}{2}})^2}{k(y^{\frac{k}{2}}-y^{-\frac{k}{2}})^2} \mathcal  F_{\frac{d-3}{k}}(y^{k}).
\end{align*}
Using a similar inequality as \eqref{eqn:k2ineq}, we have
\[\deg_y\Big(\big(y^{\frac{3(d-3) }{2}}-y^{-\frac{3(d-3) }{2}}\big)^2\frac{(y^{\frac{1}{2}}-y^{-\frac{1}{2}})^2}{k(y^{\frac{k}{2}}-y^{-\frac{k}{2}})^2} \mathcal  F_{\frac{d-3}{k}}(y^{k})\Big)\leq g(d)-d+4,\quad \text{ if }k>1.\]
From the above analysis, we may conclude that only the term
\[\big(y^{\frac{3(d-3) }{2}}-y^{-\frac{3(d-3) }{2}}\big)^2 \cdot  \mathcal F_{d-3}(y)\]
on the LHS of \eqref{eqn:key2bd} has degree greater than $g(d)-d+4$. Now inequality \eqref{eqn:mainineq1} follows from the above analysis.
\end{proof}

\begin{rmk}
For $d<5$, we have a better degree bound. Actually, by the computation in Section \ref{sec:Nume}, we have
\[\mathcal X_d=
\begin{cases}
0,\quad \text{ if }d=1,2,4,\\
-(y^{\frac{1}{2}}-y^{-\frac{1}{2}})^2,\quad \text{ if }d=3.
\end{cases}
\]
where we set $\mathcal F_d=0$ if $d\leq 0$.
\end{rmk}

Furthermore, we have the following stronger result when $d\geq 6$:
\begin{prop} \label{prop:degbd2}
Let 
$$
\mathcal Y_d\coloneqq\mathcal X_d  + n_{0,1} \left( \frac{ y^{\frac{3 }{2}}-y^{-\frac{3 }{2}}}{y^{\frac{1}{2}}-y^{-\frac{1 }{2}}}\right)^2 \big(y^{\frac{3(d-4) }{2}}-y^{-\frac{3(d-4) }{2}}\big)^2\cdot  \mathcal F_{d-4}
$$
where $n_{0,1}=3$ is the genus $0$, degree $1$ Gopakumar-Vafa invariants of local $\PP^2$. Then for $d\geq 6$,
$$
\mathcal Y_d \in \Z[y,y^{-1}]_{ -g(d) +2d-10 \leq \deg \leq g(d)-2d+10 }.
$$
\end{prop}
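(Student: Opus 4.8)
The plan is to push the $y$-degree analysis of Proposition~\ref{prop:degbd1} one step further, subtracting from $\mathcal X_d$ the single remaining ``dangerous'' atom on the sheaf side of \eqref{eqn:key2bd}. First, $\mathcal Y_d\in\Z[y,y^{-1}]$ and is palindromic: this holds for $\mathcal X_d$ by Proposition~\ref{prop:degbd1}, and the term added to it is $n_{0,1}(1+y+y^{-1})^2\big(y^{3(d-4)}-2+y^{-3(d-4)}\big)\mathcal F_{d-4}(y)$, a product of palindromic elements of $\Z[y,y^{-1}]$ (here $n_{0,1}=3$ and $d-4\ge2$, so $\mathcal F_{d-4}$ is the genuine Gopakumar--Vafa series). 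Since $\mathcal Y_d$ is invariant under $y\mapsto y^{-1}$, it suffices to prove the upper bound $\deg_y\mathcal Y_d\le g(d)-2d+10$ for $d\ge6$.

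Expanding each $\W^{K_{\PP^2}}_{d_i}$ on the right of \eqref{eqn:key2bd} into its Gopakumar--Vafa form $\W^{K_{\PP^2}}_{d_i}=\sum_{k'\mid d_i}\frac{-1}{k'(y^{k'/2}-y^{-k'/2})^2}\mathcal F_{d_i/k'}(y^{k'})$ writes the right-hand side of \eqref{eqn:key2bd} as a finite sum of \emph{atoms}, one per datum $\big(d_E,\{d_1,\dots,d_k\},(k_1,\dots,k_k)\big)$ with $d_E\ge1$, $k_i\mid d_i$, $3d_E+\sum_i d_i=d$, equal to
\[
\frac{-(y^{1/2}-y^{-1/2})^2}{|\Aut\{d_1,\dots,d_k\}|}\,G^E_{d_E,\{d_1,\dots,d_k\}}\prod_{i=1}^{k}\frac{-1}{k_i(y^{k_i/2}-y^{-k_i/2})^2}\,\mathcal F_{d_i/k_i}(y^{k_i}).
\]
By \eqref{eqn:dE1fm} the atom $A_1$ with $d_E=1,k=1,d_1=d-3,k_1=1$ is exactly $\big(y^{3(d-3)/2}-y^{-3(d-3)/2}\big)^2\mathcal F_{d-3}$, the term removed in the definition of $\mathcal X_d$; and, using $\mathcal F_1=n_{0,1}$ and $|\Aut\{1,d-4\}|=1$ for $d\ge6$, the atom $A_2$ with $d_E=1,k=2,\{d_1,d_2\}=\{1,d-4\},k_1=k_2=1$ equals $-n_{0,1}\big(\tfrac{y^{3/2}-y^{-3/2}}{y^{1/2}-y^{-1/2}}\big)^2\big(y^{3(d-4)/2}-y^{-3(d-4)/2}\big)^2\mathcal F_{d-4}$, i.e.\ minus the correction term in $\mathcal Y_d$. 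Hence \eqref{eqn:key2bd} gives $\mathcal Y_d=\sum_{A}A-A_1-A_2-\sum_{k\mid d,\,k>1}\mathcal L_k$, where $\mathcal L_k$ denotes the $k$-th summand on the left of \eqref{eqn:key2bd}; it remains to bound the $y$-degree of each term on the right by $g(d)-2d+10$.

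Two of the three families repeat Proposition~\ref{prop:degbd1}. Each $\mathcal L_k$ with $k>1$ lies in $\Z[y,y^{-1}]$ (Remark~\ref{rmk:div+}) and has $y$-degree $\le\frac{d^2}{2k}-\frac{3d}{2}+1\le\frac{d^2}{4}-\frac{3d}{2}+1\le g(d)-2d+10$, the last step being $(d-4)^2+24\ge0$. An atom with $d_E\ge2$ (which includes the $k=0$ atoms) has, by Lemma~\ref{lem:degbd} and $\deg_y\W^{K_{\PP^2}}_{d_i}\le\frac{d_i^2-3d_i}{2}$ (the $k'=1$ term of $\W^{K_{\PP^2}}_{d_i}$ dominates, since $k'g(m/k')\le g(m)$ whenever $k'\mid m$), $y$-degree at most $g(d)-\big(3d-9+\sum_{i<j}d_id_j\big)<g(d)-2d+10$, by the same arithmetic as in Proposition~\ref{prop:degbd1}. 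The crux is $d_E=1$: then $\sum_i d_i=d-3$, $G^E_{1,\{d_1,\dots,d_k\}}=\prod_i(y^{3d_i/2}-y^{-3d_i/2})^2$, and the atom has $y$-degree at most $1+3(d-3)+\sum_i\big(k_ig(d_i/k_i)-k_i\big)$, which equals $g(d)-\sum_{i<j}d_id_j$ when all $k_i=1$ (compare \eqref{eqn:dE1deg}) and is strictly smaller otherwise. A short combinatorial check shows that for $d\ge6$ a partition $\{d_1,\dots,d_k\}$ of $d-3$ has $\sum_{i<j}d_id_j<2d-10$ only for $\{d-3\}$ (sum $0$) and, for $d\ge7$, $\{1,d-4\}$ (sum $d-4$): a $k=2$ partition with both parts $\ge2$ has product $\ge2d-10$, while a $k\ge3$ partition has $\sum_{i<j}d_id_j\ge2d-9$. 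These two partitions yield precisely $A_1$ and $A_2$ (with all $k_i=1$), already removed; for every other $d_E=1$ atom either all $k_i=1$, whence $g(d)-\sum_{i<j}d_id_j\le g(d)-2d+10$ at once, or some $k_i>1$ over $\{d-3\}$ or $\{1,d-4\}$, and substituting $k_ig(d_i/k_i)=k_i-\frac{3d_i}{2}+\frac{d_i^2}{2k_i}$ into the degree bound reduces the required inequality (using $k_i\ge2$) to $(d-7)^2\ge0$, resp.\ $(d-6)^2+4>0$. This yields $\deg_y\mathcal Y_d\le g(d)-2d+10$, hence the proposition. (For $d=6$ the claim is immediate, since then $g(d)-2d+10=g(d)-d+4$, and Proposition~\ref{prop:degbd1} already controls $\mathcal X_d$ and the correction term, both palindromic of degree $\le g(d)-d+4$.)

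The main obstacle I anticipate is the bookkeeping in the second step: matching the $k_i=1$ atom over $(d_E,k,\{d_1,d_2\})=(1,2,\{1,d-4\})$ with $-$(the correction term) requires care with signs and with the cancellation of the powers of $(y^{1/2}-y^{-1/2})$, and in particular with the fact that the ``$k'=1$ part'' of $\W^{K_{\PP^2}}_1$ contributes the scalar $n_{0,1}$ through $\mathcal F_1=n_{0,1}$; likewise the combinatorial claim that $\{d-3\}$ and $\{1,d-4\}$ are the only partitions of $d-3$ violating $\sum_{i<j}d_id_j\ge2d-10$ must be checked carefully, and it is the reason the statement begins at $d=6$. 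Once these are in place, every surviving atom beats $g(d)-2d+10$, the extremal secondary atoms over $\{d-3\}$ and $\{1,d-4\}$ collapsing to nonnegativity of explicit squares.
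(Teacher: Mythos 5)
Your proposal is correct and follows essentially the same route as the paper: the paper's proof simply observes that the only new summand on the right of \eqref{eqn:key2bd} exceeding the bound $g(d)-2d+10$ is the $d_E=1$, $\{1,d-4\}$ term, whose $k=1$ multiple-cover part is exactly the correction added in $\mathcal Y_d$, and defers the remaining degree estimates to the argument of Proposition \ref{prop:degbd1}. Your write-up fills in those deferred estimates explicitly (the classification of partitions of $d-3$ with $\sum_{i<j}d_id_j<2d-10$ and the $k_i>1$ contributions over $\{d-3\}$ and $\{1,d-4\}$), and the computations check out.
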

\begin{proof}
The proof is similar to Proposition \ref{prop:degbd1}. The only difference is that there will be a new summand on the RHS of \eqref{eqn:key2bd} whose $y$-degree is greater than $g(d)-2d+10$:
\begin{align*}
& \frac{-(y^{\frac{1}{2}}-y^{-\frac{1}{2}})^2}{|\Aut\{1,d-4\}|}G^E_{1,\{1,d-4\}}\W^{K_{\PP^2}}_{1}\W^{K_{\PP^2}}_{d-4}\\
 \stackrel{\text{by \eqref{eqn:dE1fm}}}{=}  \ & - n_{0,1} \left( \frac{ y^{\frac{3 }{2}}-y^{-\frac{3 }{2}}}{y^{\frac{1}{2}}-y^{-\frac{1 }{2}}}\right)^2 \big(y^{\frac{3(d-4) }{2}}-y^{-\frac{3(d-4) }{2}}\big)^2\sum_{k|(d-4)}\frac{(y^{\frac{1}{2}}-y^{-\frac{1}{2}})^2}{k(y^{\frac{k}{2}}-y^{-\frac{k}{2}})^2} \mathcal  F_{\frac{d-4}{k}}(y^{k}).
\end{align*}
One can similarly show that only the $k=1$ term
\[-n_{0,1} \left( \frac{ y^{\frac{3 }{2}}-y^{-\frac{3 }{2}}}{y^{\frac{1}{2}}-y^{-\frac{1 }{2}}}\right)^2 \big(y^{\frac{3(d-4) }{2}}-y^{-\frac{3(d-4) }{2}}\big)^2\cdot  \mathcal F_{d-4}\]
has $y$-degree greater than $g(d)-2d+10$. It then follows that
\[\deg_y\mathcal Y_d\leq g(d)-2d+10.\]
Since $\mathcal Y_d$ is also invariant under the change of  $y\rightarrow y^{-1}$, the lower bound also follows.
\end{proof}

\subsection{Proof of Theorem \ref{thm:leadingBetti}}
To prove Theorem \ref{thm:leadingBetti}, it is crucial to understand the leading Gopakumar-Vafa invariants of $K_{\PP^2}$. These invariants were explicitly computed by the first author and Zhou using the topological vertex formula in \cite{GZ15}:
\begin{thm}[\cite{GZ15}] \label{thm:gvbd}
Let
\begin{align*}
\mathcal Z_d  & =  \frac{(1-y)^2 }{ \prod_{k>0}(1-y^{k})^3}  \cdot \left(  \binom{d+2}{2}  -3 \sum_{i\geq 1 } \frac{y^i}{(1-y^i)^2} \right),\\
\mathcal Z'_d & = \frac{ (1-y^3)}{ \prod_{k>0}(1-y^{k})^3} \cdot \left(  \binom{d+1}{2}  -3 \sum_{i\geq 1 } \frac{y^i}{(1-y^i)^2} -3  \frac{y^3}{(1-y^3) }\right),
\end{align*}
then  we have 
\begin{align*}
 (-1)^{d-1}  y^{g(d)}\cdot \mathcal F_d - \mathcal Z_d  \ & \in \ y^{ d-1}\cdot \mathbb \Z[[y]],\quad \text{if } d\geq 1,\\
(-1)^{d-1} y^{g(d)}\cdot \mathcal F_d - \mathcal Z_d +3 \,y^{d-1}\cdot \mathcal Z'_d \ & \in \ y^{2d-4}\cdot \mathbb \Z[[y]],\quad \text{if } d\geq 3.
\end{align*}
\end{thm}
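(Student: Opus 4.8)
The plan is to extract both estimates from the topological vertex formula for the all-genus free energy of $K_{\PP^2}$, following \cite{GZ15}. Gluing three $\C^3$-patches along the triangle dual to the fan of $\PP^2$ gives the standard closed expression for the Gromov--Witten partition function,
\[
Z(Q,q)=\sum_{\lambda_1,\lambda_2,\lambda_3}Q^{|\lambda_1|+|\lambda_2|+|\lambda_3|}\prod_{i=1}^{3}f_{\lambda_i}(q)\,C_{\lambda_{i-1}^{t}\,\lambda_{i+1}\,\emptyset}(q),
\]
with indices read cyclically, $f_\lambda(q)=(-1)^{|\lambda|}q^{-\kappa(\lambda)/2}$ the framing factor ($\kappa(\lambda)=2\sum_{\square\in\lambda}c(\square)$), and the topological vertex $C_{\mu\nu\emptyset}(q)$ written through principal specializations of (skew) Schur functions by the hook-content formula. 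Setting $F=\log Z=\sum_{d\geq1}F_d(q)Q^d$ and comparing with the Gopakumar--Vafa form of $F$, one gets, under $y=q$,
\[
\mathcal{F}_d(y)=-(y^{1/2}-y^{-1/2})^{2}\Big(F_d(y)-\sum_{k\mid d,\ k>1}\tfrac1k\sum_{g\geq0}n_{g,d/k}(-1)^{g-1}(y^{k/2}-y^{-k/2})^{2g-2}\Big).
\]
Since the $k>1$ terms are built from $\mathcal{F}_{d/k}$ in the variable $y^{k}$, after multiplying by $y^{g(d)}$ they affect only $y$-orders $\geq 2d-4$ and are therefore irrelevant to the ranges in the statement; it suffices to analyze the low-order part of $-(y^{1/2}-y^{-1/2})^{2}\,[Q^d]\log Z$.

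The heart of the argument is a $q$-degree estimate on the vertex sum, which after the shift by $y^{g(d)}$ becomes a $y$-order estimate. By the hook-content formula, the summand attached to a triple $(\lambda_1,\lambda_2,\lambda_3)$ with $\sum_i|\lambda_i|=d$ is a monomial in $q$ — whose exponent is an explicit quadratic in the box data — times $\prod_i\prod_{\square\in\lambda_i}(1-q^{h(\square)})^{-1}$, corrected by numerator factors coming from the skew specializations $s_\nu(q^{\mu^{t}+\rho})$ inside the vertex. One then shows that, after taking $[Q^d]$ of $\log Z$ and shifting, the terms of $y$-order $<d-1$ come only from a distinguished extremal family of triples — those whose Young diagrams fill the dual triangle most efficiently, with only boundedly many boxes free to slide along a single edge — and that resumming this family in closed form produces $\prod_{k>0}(1-y^{k})^{-3}$ times the polynomial correction $\binom{d+2}{2}-3\sum_{i\geq1}y^{i}/(1-y^{i})^{2}$. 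Here $\binom{d+2}{2}=h^{0}(\PP^2,\mathcal{O}(d))$ is the value forced by the top Gopakumar--Vafa invariant $n_{g(d),d}=(-1)^{\dim|\mathcal{O}(d)|}e(|\mathcal{O}(d)|)$ together with $n_{g,d}=0$ for $g>g(d)$, while $\prod_{k>0}(1-y^{k})^{-3}$ is the universal factor through which this single number propagates to lower genera. This gives $(-1)^{d-1}y^{g(d)}\mathcal{F}_d\equiv\mathcal{Z}_d\pmod{y^{d-1}\Z[[y]]}$ for $d\geq1$.

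For the sharper statement I would push the same analysis one layer further: the next family of near-extremal triples — two free boxes, or one free box on each of two distinct edges — contributes a correction whose $y$-order begins at $y^{d-1}$, and resumming it gives exactly $-3\,y^{d-1}\mathcal{Z}'_d$, after which every remaining triple has $y$-order $\geq 2d-4$. Both resummations reduce to standard Schur-function identities of Littlewood and Cauchy type at principal specializations, now with the shifted arguments $q^{\mu^{t}+\rho}$.

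The hard part is this combinatorial bookkeeping, in two places. First, one must prove sharp lower bounds on the $q$-degree of the skew specializations $s_\nu(q^{\mu^{t}+\rho})$, which are genuinely more delicate than those of $s_\nu(q^{\rho})$, in order to pin down exactly which triples survive at a prescribed $y$-order. Second, one must resum the extremal and near-extremal families into the stated closed products; the thresholds $y^{d-1}$ and $y^{2d-4}$ will emerge as the minimal $q$-degrees of the first discarded layers of triples, so the final estimate is only as strong as one's control of those minimal degrees.
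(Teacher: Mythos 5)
The paper does not prove Theorem \ref{thm:gvbd} at all: it is imported verbatim from \cite{GZ15}, and the only thing the authors say about its provenance is that it was ``derived \dots using the topological vertex formula.'' Your outline correctly identifies that method and gets the surrounding bookkeeping right --- the relation $\mathcal{F}_d(y)=-(y^{1/2}-y^{-1/2})^2\bigl(F_d-\text{(multiple covers)}\bigr)$ is the correct inversion of the Gopakumar--Vafa form, your claim that the $k>1$ multiple-cover terms only enter at order $\geq y^{2d-4}$ after the shift by $y^{g(d)}$ checks out (for $k=2$ the relevant bound reduces to $(d-4)^2\geq 0$), and the normalization $\mathcal{Z}_d|_{y^0}=\binom{d+2}{2}$ is consistent with $n_{g(d),d}=(-1)^{d(d+3)/2}\binom{d+2}{2}$.

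However, what you have written is a plan, not a proof. The entire content of the theorem lies in the two steps you explicitly defer: (i) the sharp lower bounds on the $q$-adic valuation of the principally specialized skew Schur functions $s_\nu(q^{\mu^t+\rho})$ appearing in the vertex sum, which are what isolates the ``extremal'' and ``near-extremal'' triples of partitions contributing below orders $y^{d-1}$ and $y^{2d-4}$ respectively; and (ii) the closed-form resummation of those families into $\prod_{k>0}(1-y^k)^{-3}$ times the stated polynomial corrections, producing $\mathcal{Z}_d$ and $-3y^{d-1}\mathcal{Z}_d'$. Both are asserted (``one then shows \dots'', ``resumming it gives exactly \dots'') rather than established, and you flag them yourself as the hard part. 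In particular nothing in the write-up certifies that the thresholds are exactly $d-1$ and $2d-4$, nor that the near-extremal layer resums to the specific rational function $\mathcal{Z}_d'$ with its $-3y^3/(1-y^3)$ term. Until those two steps are carried out (or the precise statements are located and quoted from \cite{GZ15}), the argument has a genuine gap; as it stands it is a faithful description of where the theorem comes from rather than a derivation of it.
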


These $\mathcal Z_d$ and $\mathcal Z'_d$ satisfy the following equations:
\begin{lem}\label{lem:1st}
For $d\geq 4$,
\begin{align*}
   \frac{1}{3d} \left( \mathcal Z_d    -  \mathcal Z_{d-3}  \right)   =    \prod_{k>0}\frac{1}{(1-y^{k})(1-y^{k+1})^2}.
\end{align*}
\end{lem}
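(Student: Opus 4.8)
The plan is to reduce both sides to the closed rational expression $\dfrac{(1-y)^2}{\prod_{k>0}(1-y^{k})^3}$ by a direct computation. First I would observe that in the definition of $\mathcal Z_d$, the only $d$-dependence sits in the binomial coefficient $\binom{d+2}{2}$; the series $3\sum_{i\geq 1}\frac{y^i}{(1-y^i)^2}$ and the prefactor $\frac{(1-y)^2}{\prod_{k>0}(1-y^k)^3}$ do not depend on $d$. Hence the difference telescopes:
$$
\mathcal Z_d-\mathcal Z_{d-3}=\frac{(1-y)^2}{\prod_{k>0}(1-y^{k})^3}\left(\binom{d+2}{2}-\binom{d-1}{2}\right).
$$
(Here $d\geq 4$ is exactly what is needed so that $d-3\geq 1$ and $\mathcal Z_{d-3}$ is defined by the same formula.)

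Next I would evaluate the binomial difference: $\binom{d+2}{2}-\binom{d-1}{2}=\frac{(d+2)(d+1)-(d-1)(d-2)}{2}=\frac{6d}{2}=3d$. Dividing by $3d$ then gives
$$
\frac{1}{3d}\left(\mathcal Z_d-\mathcal Z_{d-3}\right)=\frac{(1-y)^2}{\prod_{k>0}(1-y^{k})^3}.
$$

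Finally I would identify the right-hand side of the lemma with the same expression by reindexing the product. Writing $\prod_{k>0}(1-y^{k+1})^2=\prod_{j\geq 2}(1-y^{j})^2=(1-y)^{-2}\prod_{j\geq 1}(1-y^{j})^2$, we get
$$
\prod_{k>0}\frac{1}{(1-y^{k})(1-y^{k+1})^2}=\frac{1}{\prod_{k\geq 1}(1-y^{k})}\cdot\frac{(1-y)^2}{\prod_{j\geq 1}(1-y^{j})^2}=\frac{(1-y)^2}{\prod_{k\geq 1}(1-y^{k})^3},
$$
which coincides with the expression above, completing the proof. The computation is entirely elementary; the only points that require a moment of care are the telescoping (making sure the $d$-independent parts genuinely cancel) and the product reindexing, and checking that $d\geq 4$ keeps $\mathcal Z_{d-3}$ within the range where the stated formula applies. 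I do not anticipate any real obstacle.
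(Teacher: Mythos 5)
Your computation is correct and is exactly the argument the paper intends when it says the lemma "directly follows" from the explicit formula for $\mathcal Z_d$ in Theorem \ref{thm:gvbd}: the $d$-independent parts cancel, $\binom{d+2}{2}-\binom{d-1}{2}=3d$, and the product reindexing identifies $\frac{(1-y)^2}{\prod_{k\geq 1}(1-y^k)^3}$ with the right-hand side. No issues.
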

\begin{proof}
This directly follows from Theorem \ref{thm:gvbd}.
\end{proof}

\begin{lem}\label{lem:2nd}
For $d\geq 5$,
 \begin{align*}
 & \ \frac{1}{3d} \cdot \left(  - y^{d-1} \cdot \mathcal Z'_d   +  y^{d-4} \cdot \mathcal Z'_{d-3}    -y^{d-4}(1+y+y^2)^2 \cdot \mathcal Z_{d-4}    \right) \\
 = & \  \frac{-y^{d-1}(1+y+y^2)}{\prod_{k>0}(1-y^{k})^2(1-y^{k+1})}.
\end{align*}
\end{lem}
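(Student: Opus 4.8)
The plan is to reduce the claim to a single identity of rational functions in $y$ (with $d$ a formal parameter) by clearing the common ``partition-function'' denominator and then tracking separately the three kinds of terms that build up $\mathcal Z_d$ and $\mathcal Z'_d$. Write $P\coloneqq\prod_{k>0}(1-y^k)^3$ and $\Sigma\coloneqq 3\sum_{i\geq 1}\frac{y^i}{(1-y^i)^2}$, so that
\begin{align*}
\mathcal Z_d&=\frac{(1-y)^2}{P}\Bigl(\binom{d+2}{2}-\Sigma\Bigr),\\
\mathcal Z'_d&=\frac{1-y^3}{P}\Bigl(\binom{d+1}{2}-\Sigma-\frac{3y^3}{1-y^3}\Bigr).
\end{align*}
The first point is that the two index shifts appearing on the left-hand side are compatible: both $(d-4)+2$ and $(d-3)+1$ equal $d-2$, so after multiplying the left-hand side by $3dP$ the three summands carry the binomials $\binom{d+1}{2}$, $\binom{d-2}{2}$ and $\binom{d-2}{2}$ respectively.

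The key algebraic input is the elementary identity $(1+y+y^2)^2(1-y)^2=(1-y^3)^2$, which turns $-y^{d-4}(1+y+y^2)^2\mathcal Z_{d-4}$ into $-\frac{y^{d-4}(1-y^3)^2}{P}\bigl(\binom{d-2}{2}-\Sigma\bigr)$. Hence every term of $3dP\cdot(\mathrm{LHS})$ carries a factor $1-y^3$, and it remains to simplify the bracket
\begin{multline*}
-y^{d-1}\Bigl(\binom{d+1}{2}-\Sigma-\frac{3y^3}{1-y^3}\Bigr)+y^{d-4}\Bigl(\binom{d-2}{2}-\Sigma-\frac{3y^3}{1-y^3}\Bigr)\\
-y^{d-4}(1-y^3)\Bigl(\binom{d-2}{2}-\Sigma\Bigr).
\end{multline*}
I would group the terms by type. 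The $\Sigma$-contributions carry the coefficients $y^{d-1}$, $-y^{d-4}$ and $y^{d-4}-y^{d-1}$, which sum to zero. The $\frac{3y^3}{1-y^3}$-contributions combine to $\frac{3y^3}{1-y^3}\bigl(y^{d-1}-y^{d-4}\bigr)=-3y^{d-1}$, the factor $1-y^3$ cancelling cleanly. In the binomial part the last term contributes $-y^{d-4}(1-y^3)\binom{d-2}{2}$, which together with the $+y^{d-4}\binom{d-2}{2}$ of the middle term leaves $y^{d-4}\cdot y^3\binom{d-2}{2}=y^{d-1}\binom{d-2}{2}$; combined with the $-y^{d-1}\binom{d+1}{2}$ of the first term this gives $y^{d-1}\bigl(\binom{d-2}{2}-\binom{d+1}{2}\bigr)=-3(d-1)\,y^{d-1}$, using $\binom{d+1}{2}-\binom{d-2}{2}=3(d-1)$. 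Adding the three contributions, the bracket equals $-3d\,y^{d-1}$.

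Collecting everything, $3dP\cdot(\mathrm{LHS})=-3d\,y^{d-1}(1-y^3)$, hence $\mathrm{LHS}=-y^{d-1}(1-y^3)/P$. To recognize the stated right-hand side I would finally use $1-y^3=(1-y)(1+y+y^2)$ together with the product identity $\prod_{k>0}(1-y^k)^2(1-y^{k+1})=(1-y)^2\prod_{k\geq 2}(1-y^k)^3=\frac{1}{1-y}\prod_{k>0}(1-y^k)^3=\frac{P}{1-y}$; substituting turns $-y^{d-1}(1-y^3)/P$ into $\frac{-y^{d-1}(1+y+y^2)}{\prod_{k>0}(1-y^k)^2(1-y^{k+1})}$, which is exactly the assertion. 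No step here is a genuine obstacle — the only care needed is the bookkeeping of the index shifts and of the three cancellations above. The hypothesis $d\geq 5$ plays no role beyond ensuring that $\mathcal Z_{d-4}$ and $\mathcal Z'_{d-3}$ lie in the range in which they were introduced; the computation itself is a formal identity of rational functions once $\mathcal Z_d$ and $\mathcal Z'_d$ are read off the displayed closed forms.
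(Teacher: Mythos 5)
Your computation is correct and follows essentially the same route as the paper: substitute the closed forms of $\mathcal Z_d$ and $\mathcal Z'_d$, use $(1+y+y^2)^2(1-y)^2=(1-y^3)^2$ to give all three terms a common factor of $1-y^3$, observe that the $\sum_i y^i/(1-y^i)^2$ contributions cancel, and reduce the remainder to $-3d\,y^{d-1}$ via $\binom{d+1}{2}-\binom{d-2}{2}=3(d-1)$. The only cosmetic difference is that you group terms by type while the paper splits the $\mathcal Z_{d-4}$ summand into two pieces before recombining; the final rewriting of $\prod_{k>0}(1-y^k)^2(1-y^{k+1})$ as $\prod_{k>0}(1-y^k)^3/(1-y)$ is also handled correctly.
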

\begin{proof}
This just follows from the direct computation:
 \begin{align*}
 \mathrm{LHS}  \ =  \ \frac{y^{d-1}}{3d} \bigg[  \ &  \frac{ -(1-y^3)}{ \prod_{k>0}(1-y^{k})^3} \cdot \left(  \binom{d+1}{2}  -3 \sum_{i\geq 1 } \frac{y^i}{(1-y^i)^2} -3  \frac{y^3}{(1-y^3) }\right)  \\
 \ &  +\frac{ y^{-3} \,(1-y^3)}{ \prod_{k>0}(1-y^{k})^3} \cdot \left(  \binom{d-2}{2}  -3 \sum_{i\geq 1 } \frac{y^i}{(1-y^i)^2} -3  \frac{y^3}{(1-y^3) }\right)  \\
 \ &   - \frac{ y^{-3}(1-y^3)^2}{ \prod_{k>0}(1-y^{k})^3} \cdot \left(  \binom{d-2}{2}  -3 \sum_{i\geq 1 } \frac{y^i}{(1-y^i)^2} \right)  \bigg]\\
\  =  \ \frac{y^{d-1}}{3d} \bigg[  \ &  \frac{ -(1-y^3)}{ \prod_{k>0}(1-y^{k})^3} \cdot \left(  \binom{d+1}{2}  -3 \sum_{i\geq 1 } \frac{y^i}{(1-y^i)^2} -3  \frac{y^3}{(1-y^3) }\right)  \\
 \ &  +\frac{ y^{-3} \,(1-y^3)}{ \prod_{k>0}(1-y^{k})^3} \cdot \left(  \binom{d-2}{2}  -3 \sum_{i\geq 1 } \frac{y^i}{(1-y^i)^2} -3  \frac{y^3}{(1-y^3) }\right)  \\
 \ &   -\frac{ y^{-3}(1-y^3)}{ \prod_{k>0}(1-y^{k})^3} \cdot \left(  \binom{d-2}{2}  -3 \sum_{i\geq 1 } \frac{y^i}{(1-y^i)^2} \right)\\  
 \ & +  \frac{(1-y^3)}{ \prod_{k>0}(1-y^{k})^3} \cdot \left(  \binom{d-2}{2}  -3 \sum_{i\geq 1 } \frac{y^i}{(1-y^i)^2} \right)
 \bigg]\\
 \  =  \frac{y^{d-1}}{3d} \cdot\bigg[  \ &  \frac{- (1-y^3)}{ \prod_{k>0}(1-y^{k})^3} \cdot \left(  \binom{d+1}{2}  - \binom{d-2}{2} -3  \frac{y^3}{(1-y^3) }+ 3 \frac{1}{(1-y^3) }\right) \bigg]   \\
\  =  \frac{y^{d-1}}{3d} \cdot\bigg[  \ &  \frac{- (1-y^3)}{ \prod_{k>0}(1-y^{k})^3} \cdot \left(  3d-3 - 3\frac{y^3}{(1-y^3) }+ 3 \frac{1}{(1-y^3) }\right) \bigg]  \ =   \mathrm{RHS}.
\end{align*}
\end{proof}

We are ready to give a closed formula for the leading Betti numbers of $M_d$.
\begin{thm}[=Theorem \ref{thm:leadingBetti}]\label{thm:2nd}
For $d\geq 6$, we have
$$
\hat{\Omega}^{\mathbb P^2}_d =  \prod_{k>0} \frac{1}{(1-y^{k})(1-y^{k+1})^{2}} \Big( 1 - 3\,y^{d-1}\cdot \frac{ 1+  y + y^2   }{1-y}  \Big) \bigg|_{y^{\leq 2d-11}} +O ({y^{ 2d-10}}).
$$
\end{thm}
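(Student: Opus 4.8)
The plan is to extract the $y^{\le 2d-11}$ part of $\hat{\Omega}^{\mathbb P^2}_d$ directly from the two degree estimates of Section \ref{subsec:degbd}, then replace the Gopakumar--Vafa series $\mathcal F_d,\mathcal F_{d-3},\mathcal F_{d-4}$ by the leading approximations of Theorem \ref{thm:gvbd}, and finally collapse the result using Lemmas \ref{lem:1st} and \ref{lem:2nd}. Write $g\coloneqq g(d)=\tfrac{(d-1)(d-2)}{2}$. Unravelling the definitions of $\mathcal X_d$ (Proposition \ref{prop:degbd1}), $\mathcal Y_d$ (Proposition \ref{prop:degbd2}) and $\hat{\Omega}^{\mathbb P^2}_d$ — and using $(-1)^{d^2+1}=(-1)^{d-1}$, the identity $\big(\tfrac{y^{3/2}-y^{-3/2}}{y^{1/2}-y^{-1/2}}\big)^2=(1+y+y^{-1})^2$, and $n_{0,1}=3$ — one obtains
\begin{align*}
3d\,\hat{\Omega}^{\mathbb P^2}_d \;=\;& (-1)^{d-1}y^{g}\mathcal Y_d \;+\; (-1)^{d-1}y^{g}\mathcal F_d \;+\; (-1)^{d-1}y^{g}\bigl(y^{\frac{3(d-3)}{2}}-y^{-\frac{3(d-3)}{2}}\bigr)^{2}\mathcal F_{d-3}\\
&-\; 3\,(-1)^{d-1}(1+y+y^{-1})^{2}\,y^{g}\bigl(y^{\frac{3(d-4)}{2}}-y^{-\frac{3(d-4)}{2}}\bigr)^{2}\mathcal F_{d-4}.
\end{align*}
By Proposition \ref{prop:degbd2} the Laurent polynomial $\mathcal Y_d$ has $y$-degree at most $g-2d+10$, so $y^{g}\mathcal Y_d\equiv 0\pmod{y^{2d-10}}$ and the first summand is invisible to the truncation $\big|_{y^{\le 2d-11}}$.

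The next step is to replace each remaining $\mathcal F$ by its approximation from Theorem \ref{thm:gvbd}. Expanding $\bigl(y^{3a/2}-y^{-3a/2}\bigr)^{2}=y^{3a}-2+y^{-3a}$, and using that $\mathcal F_m$ is palindromic and supported in degrees $[-g(m),g(m)]$ while $\mathcal Z_m,\mathcal Z'_m$ have nonzero constant term, one checks — with the ``content'' identities $g(d)-3(d-3)=g(d-3)$ and $g(d)-3(d-4)=g(d-4)+(d-2)$ — that modulo $y^{2d-10}$ only the $y^{-3a}$ summand of each square contributes, and more precisely that
\begin{align*}
(-1)^{d-1}y^{g}\mathcal F_d &\equiv \mathcal Z_d - 3y^{d-1}\mathcal Z'_d,\\
(-1)^{d-1}y^{g}\bigl(y^{\frac{3(d-3)}{2}}-y^{-\frac{3(d-3)}{2}}\bigr)^{2}\mathcal F_{d-3} &\equiv -\bigl(\mathcal Z_{d-3} - 3y^{d-4}\mathcal Z'_{d-3}\bigr),\\
3\,(-1)^{d-1}(1+y+y^{-1})^{2}y^{g}\bigl(y^{\frac{3(d-4)}{2}}-y^{-\frac{3(d-4)}{2}}\bigr)^{2}\mathcal F_{d-4} &\equiv 3\,y^{d-4}(1+y+y^{2})^{2}\mathcal Z_{d-4}.
\end{align*}
Here the first congruence uses the sharper (valid mod $y^{2d-4}$) statement of Theorem \ref{thm:gvbd}; the second uses Theorem \ref{thm:gvbd} for $d-3$, which is then exact mod $y^{2d-10}$; the third uses it for $d-4$ (valid mod $y^{2d-12}$), after which the $\mathcal Z'_{d-4}$-contribution is discarded since, after multiplication by $(1+y+y^{-1})^{2}y^{d-2}$, the term $y^{d-5}\mathcal Z'_{d-4}$ begins at $y^{2d-9}$.

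Substituting these into the displayed identity, then applying Lemma \ref{lem:1st} to $\mathcal Z_d-\mathcal Z_{d-3}$ and Lemma \ref{lem:2nd} to the combination $-y^{d-1}\mathcal Z'_d+y^{d-4}\mathcal Z'_{d-3}-y^{d-4}(1+y+y^{2})^{2}\mathcal Z_{d-4}$ (both lemmas hold for $d\ge 5$), and dividing through by $3d$ in $\mathbb Q[[y]]$, one arrives at
\[
\hat{\Omega}^{\mathbb P^2}_d \;\equiv\; \prod_{k>0}\frac{1}{(1-y^{k})(1-y^{k+1})^{2}}\;-\;\frac{3\,y^{d-1}(1+y+y^{2})}{\prod_{k>0}(1-y^{k})^{2}(1-y^{k+1})}\pmod{y^{2d-10}}.
\]
Rewriting the second denominator via the power-series identity $\prod_{k>0}(1-y^k)^{2}(1-y^{k+1})=(1-y)\prod_{k>0}(1-y^k)(1-y^{k+1})^{2}$ (both sides equal $\tfrac{1}{1-y}\prod_{k>0}(1-y^k)^{3}$) turns the right-hand side into $\prod_{k>0}\frac{1}{(1-y^{k})(1-y^{k+1})^{2}}\bigl(1-3y^{d-1}\tfrac{1+y+y^{2}}{1-y}\bigr)$, which is the claim.

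I expect the main obstacle to be the degree bookkeeping in the middle step: one must determine exactly which summands of $y^{3a}-2+y^{-3a}$ and which Taylor tails coming from Theorem \ref{thm:gvbd} begin strictly past $y^{2d-11}$ and may therefore be dropped. Each such requirement reduces to an inequality linear in $d$, the tightest being essentially $3d-16\ge 2d-10$, i.e.\ $d\ge 6$, which is exactly the range claimed. The two content identities $g(d)-3(d-3)=g(d-3)$ and $g(d)-3(d-4)=g(d-4)+(d-2)$ are what align the $y$-shifts of the relative factors $\bigl(y^{3a/2}-y^{-3a/2}\bigr)^{2}$ with the genus shifts built into Theorem \ref{thm:gvbd}, allowing the approximations to collapse cleanly through Lemmas \ref{lem:1st} and \ref{lem:2nd}.
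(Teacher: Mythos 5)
Your proposal is correct and follows essentially the same route as the paper's proof: reduce via Proposition \ref{prop:degbd2} to the three terms $(-1)^{d-1}y^{g(d)}\mathcal F_d+(-1)^{d-1}y^{g(d)-3d+9}\mathcal F_{d-3}-3(-1)^{d-1}(1+y+y^2)^2y^{g(d)-3d+10}\mathcal F_{d-4}$, substitute the approximations of Theorem \ref{thm:gvbd}, and collapse with Lemmas \ref{lem:1st} and \ref{lem:2nd}; you merely make the degree bookkeeping (which summands of $(y^{3a/2}-y^{-3a/2})^2$ survive, and the shift identities $g(d)-3(d-3)=g(d-3)$, $g(d)-3(d-4)=g(d-4)+(d-2)$) explicit where the paper leaves it implicit. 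The only micro-adjustment: at $d=6$ the second line of Theorem \ref{thm:gvbd} is not available for $\mathcal F_{d-4}=\mathcal F_2$, but the first line (error $O(y^{d-5})$, hence $O(y^{2d-9})$ after multiplication by $y^{d-4}(1+y+y^2)^2$) already suffices, exactly as your own degree count for discarding $\mathcal Z'_{d-4}$ shows.
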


\begin{proof}
Recall that 
$$\hat{\Omega}^{\mathbb P^2}_d= y^{\frac{(d-1)(d-2)}{2}}\cdot \frac{\Omega^{\mathbb P^2}_d}{{{\frac{\left(y^{\frac{1}{2}}\right)^{3d}-\left(y^{\frac{1}{2}}\right)^{-3d}}{y^{\frac{1}{2}}-y^{-\frac{1}{2}}}}}}.$$
By Proposition \ref{prop:degbd2}, to compute the coefficients of $\hat{\Omega}^{\mathbb P^2}_d$ up to degree $2d-11$, we only need to compute the coefficients of 
\[(-1)^{d-1}y^{g(d)}\mathcal F_d+(-1)^{d-1}y^{g(d)-3d+9}\mathcal F_{d-3}-(-1)^{d-1}n_{0,1}(1+y+y^2)^2 y^{g(d)-3d+10}\mathcal F_{d-4}\]
up to degree $2d-11$. By Theorem \ref{thm:gvbd}, it equals to
\begin{align*}
 & \mathcal Z_d-3y^{d-1}\mathcal Z'_{d}-(\mathcal Z_{d-3}-3y^{d-4}\mathcal Z'_{d-3})-3y^{d-4}(1+y+y^2)^2\mathcal Z_{d-4}+O ({y^{ 2d-10}})\\
= \ & \mathcal Z_d-\mathcal Z_{d-3}+3 \left(  - y^{d-1} \cdot \mathcal Z'_d   +  y^{d-4} \cdot \mathcal Z'_{d-3}    -y^{d-4}(1+y+y^2)^2 \cdot \mathcal Z_{d-4}    \right)+O ({y^{ 2d-10}}).
\end{align*}
After further dividing by $3d$ and using Lemmas \ref{lem:1st} and \ref{lem:2nd}, we can deduce that
$$
\hat{\Omega}^{\mathbb P^2}_d =  \prod_{k>0} \frac{1}{(1-y^{k})(1-y^{k+1})^{2}} \Big( 1 - 3\,y^{d-1}\cdot \frac{ 1+  y + y^2   }{1-y}  \Big) \bigg|_{y^{\leq 2d-11}} +O ({y^{ 2d-10}}).
$$
for $d\geq 6$.
\end{proof}

\section{Numerical results}\label{sec:Nume}
In this section, a detailed computation of the shifted Poincar\'e polynomials $\Omega_{d}^{\PP^2}$ of moduli space of one dimensional sheaves on $\PP^2$ is provided for $d\leq 6$ using our new method. We also give a list of $\Omega_{d}^{\PP^2}$ up to $d\leq 10$ at the end. The new method requires a prior knowledge of the
Gopakumar-Vafa invariants $n_{g,d}$ of local $\PP^2$ which comes from \cite{HKR}. 

For $d=1,2$, we actually have 
\[\frac{3d(-1)^{d^2+1}\Omega_{d}^{\mathbb{P}^2}\left(y^{\frac{1}{2}}\right)}{{\frac{\left(y^{\frac{1}{2}}\right)^{3d}-\left(y^{\frac{1}{2}}\right)^{-3d}}{y^{\frac{1}{2}}-y^{-\frac{1}{2}}}}}-\sum_{g\geq 0}n_{g,d}(-1)^g(y^{\frac{1}{2}}-y^{-\frac{1}{2}})^{2g}=0.\]
This implies that 
\begin{align*}
\Omega_{1}^{\mathbb{P}^2}&={\frac{y^{\frac{3}{2}}-y^{-\frac{3}{2}}}{y^{\frac{1}{2}}-y^{-\frac{1}{2}}}}\,,\\
\Omega_{2}^{\mathbb{P}^2}&={\frac{y^{3}-y^{-3}}{y^{\frac{1}{2}}-y^{-\frac{1}{2}}}}\,.\\
\end{align*}

For $d=3$, by \eqref{eqn:sheaf/GW} and induction, we have
\[\frac{-1}{(y^{\frac{1}{2}}-y^{-\frac{1}{2}})^2} \left(\frac{3d(-1)^{d^2+1}\Omega_{d}^{\mathbb{P}^2}\left(y^{\frac{1}{2}}\right)}{{\frac{\left(y^{\frac{1}{2}}\right)^{3d}-\left(y^{\frac{1}{2}}\right)^{-3d}}{y^{\frac{1}{2}}-y^{-\frac{1}{2}}}}}-\sum_{g\geq 0}n_{g,d}(-1)^g(y^{\frac{1}{2}}-y^{-\frac{1}{2}})^{2g}\right)=\sum_{T\in \RT_d,\atop t^{-1}(0)\neq \emptyset} \Cont_{T}\,.\]
There is only one rooted tree $T$ on the RHS:

\begin{center}
\begin{tikzpicture}[x=0.75pt,y=0.75pt,yscale=-1,xscale=1]

\draw   (100,159.5) .. controls (100,153.7) and (104.7,149) .. (110.5,149) .. controls (116.3,149) and (121,153.7) .. (121,159.5) .. controls (121,165.3) and (116.3,170) .. (110.5,170) .. controls (104.7,170) and (100,165.3) .. (100,159.5) -- cycle ;

\draw (105.5,152) node [anchor=north west][inner sep=0.75pt]   [align=left] {1};

\end{tikzpicture}
\end{center}
The corresponding contribution is 
\[\Cont_T=-F^{E,\tw}_{1,\emptyset}=1.\]
Combining with the facts that $n_{0,3}=27$, $n_{1,3}=-10$, we get 
\[\Omega_{3}^{\mathbb{P}^2}=\frac{1}{y}(1+y+y^2){\frac{y^{\frac{9}{2}}-y^{-\frac{9}{2}}}{y^{\frac{1}{2}}-y^{-\frac{1}{2}}}}\,.\]

For $d=4$, still by \eqref{eqn:sheaf/GW} and induction, we have
\[\frac{-1}{(y^{\frac{1}{2}}-y^{-\frac{1}{2}})^2} \left(\frac{3d(-1)^{d^2+1}\Omega_{d}^{\mathbb{P}^2}\left(y^{\frac{1}{2}}\right)}{{\frac{\left(y^{\frac{1}{2}}\right)^{3d}-\left(y^{\frac{1}{2}}\right)^{-3d}}{y^{\frac{1}{2}}-y^{-\frac{1}{2}}}}}-\sum_{g\geq 0}n_{g,d}(-1)^g(y^{\frac{1}{2}}-y^{-\frac{1}{2}})^{2g}\right)=\sum_{T\in \RT_d,\atop t^{-1}(0)\neq \emptyset} \Cont_{T}\, .\]
There is still only one rooted tree 
\begin{center}
\begin{tikzpicture}[x=0.75pt,y=0.75pt,yscale=-1,xscale=1]

\draw   (100,159.5) .. controls (100,153.7) and (104.7,149) .. (110.5,149) .. controls (116.3,149) and (121,153.7) .. (121,159.5) .. controls (121,165.3) and (116.3,170) .. (110.5,170) .. controls (104.7,170) and (100,165.3) .. (100,159.5) -- cycle ;

\draw   (197.5,148.5) -- (218,148.5) -- (218,168.5) -- (197.5,168.5) -- cycle ;

\draw    (121,159.5) -- (197.5,159.5) ;

\draw (105.5,152) node [anchor=north west][inner sep=0.75pt]   [align=left] {1};
\draw (203.5,152) node [anchor=north west][inner sep=0.75pt]   [align=left] {1};
\draw (155,145) node [anchor=north west][inner sep=0.75pt]   [align=left] {1};
\end{tikzpicture}
\end{center}

The corresponding contribution is 
\[-\frac{2\sin(\frac{3\hbar}{2})}{3}F^{E,\tw}_{1,\{1\}}\W_1=-\frac{y^{3/2}-y^{-3/2}}{3i}\cdot\frac{3(y^{3/2}-y^{-3/2})}{i}\cdot\frac{-3}{(y^{1/2}-y^{-1/2})^2}=-3\left(\frac{y^{3/2}-y^{-3/2}}{y^{1/2}-y^{-1/2}}\right)^2.\]
Combining with the fact that 
\[n_{0,4}=-192,\quad n_{1,4}=231,\quad n_{2,4}=-102,\quad n_{3,4}=15,\]
we may deduce that
\[\Omega_{4}^{\mathbb{P}^2}=\frac{1}{y^{3}}(1+y+4y^2+4y^3+4y^4+y^5+y^6){\frac{y^{6}-y^{-6}}{y^{\frac{1}{2}}-y^{-\frac{1}{2}}}}\,.\]

For $d=5$, still by \eqref{eqn:sheaf/GW} and induction, we have
\[\frac{-1}{(y^{\frac{1}{2}}-y^{-\frac{1}{2}})^2} \left(\frac{3d(-1)^{d^2+1}\Omega_{d}^{\mathbb{P}^2}\left(y^{\frac{1}{2}}\right)}{{\frac{\left(y^{\frac{1}{2}}\right)^{3d}-\left(y^{\frac{1}{2}}\right)^{-3d}}{y^{\frac{1}{2}}-y^{-\frac{1}{2}}}}}-\sum_{g\geq 0}n_{g,d}(-1)^g(y^{\frac{1}{2}}-y^{-\frac{1}{2}})^{2g}\right)=\sum_{T\in \RT_d,\atop t^{-1}(0)\neq \emptyset} \Cont_{T}
\,.\]
Now there are two rooted trees:
\begin{center}
\begin{tikzpicture}[x=0.75pt,y=0.75pt,yscale=-1,xscale=1]

\draw   (100,159.5) .. controls (100,153.7) and (104.7,149) .. (110.5,149) .. controls (116.3,149) and (121,153.7) .. (121,159.5) .. controls (121,165.3) and (116.3,170) .. (110.5,170) .. controls (104.7,170) and (100,165.3) .. (100,159.5) -- cycle ;

\draw   (280,159.5) .. controls (280,153.7) and (284.7,149) .. (290.5,149) .. controls (296.3,149) and (301,153.7) .. (301,159.5) .. controls (301,165.3) and (296.3,170) .. (290.5,170) .. controls (284.7,170) and (280,165.3) .. (280,159.5) -- cycle ;

\draw   (197.5,148.5) -- (218,148.5) -- (218,168.5) -- (197.5,168.5) -- cycle ;

\draw   (377.5,128.5) -- (398,128.5) -- (398,148.5) -- (377.5,148.5) -- cycle ;

\draw   (377.5,168.5) -- (398,168.5) -- (398,188.5) -- (377.5,188.5) -- cycle ;

\draw    (121,159.5) -- (197.5,159.5) ;

\draw    (301,161.5) -- (377.5,179.5) ;

\draw    (301,157.5) -- (377.5,139.5) ;

\draw (70,152) node [anchor=north west][inner sep=0.75pt]   [align=left] {$T_1:$};
\draw (250,152) node [anchor=north west][inner sep=0.75pt]   [align=left] {$T_2:$};
\draw (106,152) node [anchor=north west][inner sep=0.75pt]   [align=left] {1};
\draw (286,152) node [anchor=north west][inner sep=0.75pt]   [align=left] {1};
\draw (204,152) node [anchor=north west][inner sep=0.75pt]   [align=left] {2};
\draw (384,172) node [anchor=north west][inner sep=0.75pt]   [align=left] {1};
\draw (384,132) node [anchor=north west][inner sep=0.75pt]   [align=left] {1};

\draw (155,145) node [anchor=north west][inner sep=0.75pt]   [align=left] {2};
\draw (335,133) node [anchor=north west][inner sep=0.75pt]   [align=left] {1};
\draw (335,173) node [anchor=north west][inner sep=0.75pt]   [align=left] {1};
\end{tikzpicture}
\end{center}

The contribution of $T_1$ is 
\[\Cont_{T_1}=-\frac{2\sin(3\hbar)}{6}F^{E,\tw}_{1,\{2\}}\W_2=-\frac{3(y^3-y^{-3})^2}{2(y-y^{-1})^2}+\frac{6(y^3-y^{-3})^2}{(y^{1/2}-y^{-1/2})^2}\,.\]
The contribution of $T_2$ is 
\[\Cont_{T_2}=-\frac{1}{2}\left(\frac{2\sin(3\hbar/2)}{3}\right)^2 F^{E,\tw}_{1,\{1,1\}}(\W_1)^2=\frac{9(y^{3/2}-y^{-3/2})^4}{2(y^{1/2}-y^{-1/2})^4}\,.\]
Combining with the fact that 
\[n_{0,5}=1695,\, n_{1,5}=-4452,\, n_{2,5}=5430,\, n_{3,5}=-3672,\, n_{4,5}=1386,\, n_{5,5}=-270,\, n_{6,5}=21,\]
we may deduce that $\Omega_{5}^{\mathbb{P}^2}$ equals
\[\frac{1}{y^{6}}(1+y+4y^2+7y^3+13y^4+19y^5+23y^6+19y^7+13y^8+7y^9+4y^{10}+y^{11}+y^{12})\frac{y^{15/2}-y^{-15/2}}{y^{1/2}-y^{-1/2}}\,.\]

For $d=6$, by induction, the LHS of \eqref{eqn:sheaf/GW} becomes
\[\frac{1}{2}+\frac{-1}{(y^{\frac{1}{2}}-y^{-\frac{1}{2}})^2} \left(\frac{3d(-1)^{d^2+1}\Omega_{d}^{\mathbb{P}^2}\left(y^{\frac{1}{2}}\right)}{{\frac{\left(y^{\frac{1}{2}}\right)^{3d}-\left(y^{\frac{1}{2}}\right)^{-3d}}{y^{\frac{1}{2}}-y^{-\frac{1}{2}}}}}-\sum_{g\geq 0}n_{g,d}(-1)^g(y^{\frac{1}{2}}-y^{-\frac{1}{2}})^{2g}\right).\]
While the RHS of \eqref{eqn:sheaf/GW} contains 5 rooted trees:
\begin{center}
\begin{tikzpicture}[x=0.75pt,y=0.75pt,yscale=-1,xscale=1]

\draw   (100,159.5) .. controls (100,153.7) and (104.7,149) .. (110.5,149) .. controls (116.3,149) and (121,153.7) .. (121,159.5) .. controls (121,165.3) and (116.3,170) .. (110.5,170) .. controls (104.7,170) and (100,165.3) .. (100,159.5) -- cycle ;

\draw   (180,159.5) .. controls (180,153.7) and (184.7,149) .. (190.5,149) .. controls (196.3,149) and (201,153.7) .. (201,159.5) .. controls (201,165.3) and (196.3,170) .. (190.5,170) .. controls (184.7,170) and (180,165.3) .. (180,159.5) -- cycle ;

\draw   (280,159.5) .. controls (280,153.7) and (284.7,149) .. (290.5,149) .. controls (296.3,149) and (301,153.7) .. (301,159.5) .. controls (301,165.3) and (296.3,170) .. (290.5,170) .. controls (284.7,170) and (280,165.3) .. (280,159.5) -- cycle ;




\draw    (201,159.5) -- (280,159.5) ;



\draw (70,152) node [anchor=north west][inner sep=0.75pt]   [align=left] {$T_1:$};
\draw (150,152) node [anchor=north west][inner sep=0.75pt]   [align=left] {$T_2:$};
\draw (105.5,152) node [anchor=north west][inner sep=0.75pt]   [align=left] {2};
\draw (186,152) node [anchor=north west][inner sep=0.75pt]   [align=left] {1};
\draw (286,152) node [anchor=north west][inner sep=0.75pt]   [align=left] {1};
\draw (236,145) node [anchor=north west][inner sep=0.75pt]   [align=left] {3};

\end{tikzpicture}
\end{center}
\begin{center}
\begin{tikzpicture}[x=0.75pt,y=0.75pt,yscale=-1,xscale=1]

\draw   (100,159.5) .. controls (100,153.7) and (104.7,149) .. (110.5,149) .. controls (116.3,149) and (121,153.7) .. (121,159.5) .. controls (121,165.3) and (116.3,170) .. (110.5,170) .. controls (104.7,170) and (100,165.3) .. (100,159.5) -- cycle ;

\draw   (280,159.5) .. controls (280,153.7) and (284.7,149) .. (290.5,149) .. controls (296.3,149) and (301,153.7) .. (301,159.5) .. controls (301,165.3) and (296.3,170) .. (290.5,170) .. controls (284.7,170) and (280,165.3) .. (280,159.5) -- cycle ;

\draw   (460,159.5) .. controls (460,153.7) and (464.7,149) .. (470.5,149) .. controls (476.3,149) and (481,153.7) .. (481,159.5) .. controls (481,165.3) and (476.3,170) .. (470.5,170) .. controls (464.7,170) and (460,165.3) .. (460,159.5) -- cycle ;

\draw   (197.5,148.5) -- (218,148.5) -- (218,168.5) -- (197.5,168.5) -- cycle ;

\draw   (377.5,128.5) -- (398,128.5) -- (398,148.5) -- (377.5,148.5) -- cycle ;

\draw   (377.5,168.5) -- (398,168.5) -- (398,188.5) -- (377.5,188.5) -- cycle ;

\draw   (557.5,108.5) -- (578,108.5) -- (578,128.5) -- (557.5,128.5) -- cycle ;

\draw   (557.5,148.5) -- (578,148.5) -- (578,168.5) -- (557.5,168.5) -- cycle ;

\draw   (557.5,188.5) -- (578,188.5) -- (578,208.5) -- (557.5,208.5) -- cycle ;

\draw    (121,159.5) -- (197.5,159.5) ;

\draw    (301,161.5) -- (377.5,179.5) ;

\draw    (301,157.5) -- (377.5,139.5) ;

\draw    (481,156) -- (557.5,118.5) ;

\draw    (481,159.5) -- (557.5,159.5) ;

\draw    (481,163) -- (557.5,198.5) ;

\draw (70,152) node [anchor=north west][inner sep=0.75pt]   [align=left] {$T_3:$};
\draw (250,152) node [anchor=north west][inner sep=0.75pt]   [align=left] {$T_4:$};
\draw (430,152) node [anchor=north west][inner sep=0.75pt]   [align=left] {$T_5:$};
\draw (105.5,152) node [anchor=north west][inner sep=0.75pt]   [align=left] {1};
\draw (286,152) node [anchor=north west][inner sep=0.75pt]   [align=left] {1};
\draw (465,152) node [anchor=north west][inner sep=0.75pt]   [align=left] {1};
\draw (203.5,151) node [anchor=north west][inner sep=0.75pt]   [align=left] {3};
\draw (563,152) node [anchor=north west][inner sep=0.75pt]   [align=left] {1};
\draw (563,112) node [anchor=north west][inner sep=0.75pt]   [align=left] {1};
\draw (563,192) node [anchor=north west][inner sep=0.75pt]   [align=left] {1};
\draw (383,171) node [anchor=north west][inner sep=0.75pt]   [align=left] {1};
\draw (383,131) node [anchor=north west][inner sep=0.75pt]   [align=left] {2};

\draw (155,145) node [anchor=north west][inner sep=0.75pt]   [align=left] {3};
\draw (335,133) node [anchor=north west][inner sep=0.75pt]   [align=left] {2};
\draw (514,120) node [anchor=north west][inner sep=0.75pt]   [align=left] {1};
\draw (335,173) node [anchor=north west][inner sep=0.75pt]   [align=left] {1};
\draw (514,145) node [anchor=north west][inner sep=0.75pt]   [align=left] {1};

\draw (514,184) node [anchor=north west][inner sep=0.75pt]   [align=left] {1};
\end{tikzpicture}
\end{center}
We have
\begin{align*}
\Cont_{T_1}&=-\left(y^9+y^{-9}-\frac{1}{2}\right);\\
\Cont_{T_2}&=y^9+y^{-9}-2;\\
\Cont_{T_3}&=\left(y^{9/2}-y^{-9/2}\right)^2\left(-\frac{1}{(y^{3/2}-y^{-3/2})^2}-\frac{10y+10y^{-1}+7}{(y^{1/2}-y^{-1/2})^2}\right);\\
\Cont_{T_4}&=9(y^3-y^{-3})^2(y^{3/2}-y^{-3/2})^2\left(\frac{1}{2(y-y^{-1})^2}-\frac{2}{(y^{1/2}-y^{-1/2})^2}\right)\frac{1}{(y^{1/2}-y^{-1/2})^2};\\
\Cont_{T_5}&=-\frac{9(y^{3/2}-y^{-3/2})^6}{2(y^{1/2}-y^{-1/2})^6}.\\
\end{align*}
Combining with the fact that 
\[n_{0,6}=-17064,\, n_{1,6}=80948,\, n_{2,6}=-194022,\, n_{3,6}=290853,\,n_{4,6}=-290400,\]
\[n_{5,6}=196857,\, n_{6,6}=-90390,\, n_{7,6}=27538,\, n_{8,6}=-5310,\, n_{9,6}=585,\,n_{10,6}=-28,\]
we may deduce that $\Omega_{6}^{\mathbb{P}^2}$ equals
\begin{align*}
\frac{1}{y^{10}}(1&+y+4y^2+7y^3+16y^4+25y^5+47y^6+68y^7+104y^8+128y^9+146y^{10}+128y^{11}\\
&+104y^{12}+68y^{13}+47y^{14}+25y^{15}+16y^{16}+7y^{17}+4y^{18}+y^{19}+y^{20})\frac{y^9-y^{-9}}{y^{1/2}-y^{-1/2}}\,.
\end{align*}

This method allows us to determine $\Omega_{d}^{\mathbb{P}^2}$ once the Gopakumar-Vafa invariants $n_{g,d'}$ for $d'\leq d$ are known. In the following table, we provide a list of $\hat{\Omega}^{\mathbb P^2}_d$
for $d\leq 10$, where $\hat{\Omega}^{\mathbb P^2}_d$ is given by \eqref{eqn:shiftOmega}. 
Actually, we have computed  $\hat{\Omega}^{\mathbb P^2}_d$ for $d\leq 16$ using  Gopakumar-Vafa invariants of local $\PP^2$
in \cite{HKR}. However, as the coefficients of $\hat{\Omega}^{\mathbb P^2}_d$ grow rapidly, we limit the list to $d\leq 10$ in this paper. A complete list of $\hat{\Omega}^{\mathbb P^2}_d$ for $d\leq 16$ is available at:

\vspace{1em}
\centerline{\url{https://sites.google.com/site/guoshuaimath/poincarepolynomials}}
\vspace{1em}

\noindent
From our calculation, we make the following conjecture: 
\begin{conj}
For $3| d$,
$$
\frac{\hat{\Omega}^{\mathbb P^2}_d}{y^2+y+1}
$$
is a palindromic polynomial of $y$.
\end{conj}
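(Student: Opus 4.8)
The plan is to reduce the conjecture to a single scalar identity at a cube root of unity and then to attack that identity with the graph-sum machinery of Section~\ref{sec:pfCGKT}. Write $\zeta=e^{2\pi\sqrt{-1}/3}$, so that $y^2+y+1$ is the minimal polynomial of $\zeta$ over $\Q$. Since $\hat{\Omega}^{\mathbb P^2}_d\in\Z[y]$ is palindromic and $y^2+y+1$ is a monic palindromic polynomial with nonzero constant term, divisibility of $\hat{\Omega}^{\mathbb P^2}_d$ by $y^2+y+1$ in $\Z[y]$ automatically yields a palindromic integral quotient; hence the conjecture is equivalent to the vanishing $\hat{\Omega}^{\mathbb P^2}_d(\zeta)=0$ for $3\mid d$. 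Set $\tilde\Omega_d(y):=y^{-g(d)}\hat{\Omega}^{\mathbb P^2}_d(y)$, a Laurent polynomial invariant under $y\leftrightarrow y^{-1}$, and let $\mathcal F_d(y)=\sum_{g\ge0}n_{g,d}(-1)^g(y^{1/2}-y^{-1/2})^{2g}$ be the genus-resummed local $\mathbb P^2$ series of Section~\ref{subsec:degbd}. By the refined divisibility in Remark~\ref{rmk:div+} there is $\mathcal G_d(y)\in\Z[y,y^{-1}]$ with
\[
3d(-1)^{d-1}\tilde\Omega_d(y)=\mathcal F_d(y)-(y^{1/2}-y^{-1/2})^2\,\mathcal G_d(y),
\]
and since $(y^{1/2}-y^{-1/2})^2|_{y=\zeta}=-3$, the conjecture is equivalent to the numerical identity $\mathcal F_d(\zeta)=-3\,\mathcal G_d(\zeta)$ for $3\mid d$, where $\mathcal F_d(\zeta)=\sum_{g\ge0}n_{g,d}3^g$ is the value of the all-genus Gromov--Witten series of $K_{\mathbb P^2}$ at the root of unity $\hbar=2\pi/3$.

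Next I would compute $\mathcal G_d(\zeta)$ from the graph-sum \eqref{eqn:crueqn} by localizing it at $y=\zeta$. For $S=\mathbb P^2$ every curve class is a multiple of the hyperplane, so $d_2(e)\cdot E\in3\Z$ for each edge; consequently at $\hbar=2\pi/3$ every edge factor $2\sin\!\big((d_2(e)\cdot E)\hbar/2\big)/(d_2(e)\cdot E)$ has a simple zero, and by Theorem~\ref{thm:keythm} (or directly by the containment displayed in the proof of Lemma~\ref{lem:key}, since $\tilde e(\rho,3m\sqrt{-1}\hbar)$ has a simple zero at $\hbar=2\pi/3$ for every $m\in\Z_{>0}$) every stationary factor $F^{E,\tw}_{d^0(v),\{d_2(e_1^v),\dots\}}$ with at least one leg also vanishes there, whereas the only factors that become singular are the series $\W^{K_{\mathbb P^2}}_{d'}$ with $3\mid d'$, which develop a double pole whose leading coefficient is read off from $n_{0,d'/3}$ (more generally from $\sum_{3\mid k\mid d'}n_{0,d'/k}k^{-3}$). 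Comparing these orders of vanishing with the combinatorics of a labelled rooted tree --- using that, when $|V_T|>1$, the square vertices are leaves and hence every edge is a child-edge of a circle --- shows that $\Cont_T|_{y=\zeta}=0$ unless $T$ is \emph{star-shaped}: a single circle $v_0$ carrying $d^0(v_0)=d_E\ge0$ together with $n\ge0$ square leaves whose labels are all divisible by $3$. In particular $\sum_{T\in\RT_m}\Cont_T|_{y=\zeta}=0$ whenever $3\nmid m$, and for a star-shaped $T$ the value $\Cont_T|_{y=\zeta}$ is an explicit rational number assembled from the partition number $p(d_E)$, the square-labels, and the genus-$0$ invariants $n_{0,\bullet}$. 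Feeding this into \eqref{eqn:crueqn}, whose left side at $y=\zeta$ splits as $\mathcal G_d(\zeta)+\sum_{k\mid d,\,k>1,\,3\nmid k}\tfrac1k\mathcal G_{d/k}(\zeta)+\sum_{k\mid d,\,3\mid k}\tfrac1k\mathcal G_{d/k}(1)$, gives a recursion for $\mathcal G_d(\zeta)$: the $3\nmid k$ terms are handled by induction on $d$ (there $3\mid d/k$, so the conjecture applies to $d/k<d$), and the $3\mid k$ terms involve the Euler-characteristic limits $\mathcal G_m(1)$, which are computed by the analogous collapse of \eqref{eqn:crueqn} at $y=1$ (the limit underlying Bousseau's proof \cite{Bou23} that $3d\mid\chi(M_d)$). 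The outcome is that $\mathcal F_d(\zeta)$ is pinned down, for all $d$ with $3\mid d$, by a closed formula in partition numbers and genus-$0$ Gopakumar--Vafa invariants of local $\mathbb P^2$.

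The remaining step --- establishing that closed formula, equivalently the identity $\sum_{g\ge0}n_{g,d}3^g=-3\,\mathcal G_d(\zeta)$ --- is where I expect the real difficulty to lie; it is a statement purely about the Gromov--Witten theory of $K_{\mathbb P^2}$ at the root-of-unity value $\hbar=2\pi/3$ of the string coupling, and it does not seem to follow from anything used above. Two routes look plausible. The analytic route is to exploit the well-established quasi-modularity of the local $\mathbb P^2$ free energies: the series $\sum_{d>0}\big(\sum_{g\ge0}n_{g,d}3^g\big)q^d$ should be a recognizable modular object on the local $\mathbb P^2$ modular curve, and the required identity a functional equation for it. The geometric route is to realize the prediction directly on the moduli spaces --- to exhibit, for $3\mid d$, an extra $\Z/3$-symmetry, or an extra fibration with fibre of Poincar\'e polynomial divisible by $1+y+y^2$, on $M_{d,\chi}$ (refining the conjectural $\PP^{3d-1}$-factor of $P_d$) --- and to check that its numerical shadow agrees with the graph-sum output. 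Either way, the first two steps isolate the conjecture into one clean identity at $\hbar=2\pi/3$; the combinatorial collapse and the order bookkeeping should be routine, if somewhat intricate, and the obstruction is confined entirely to proving that last identity.
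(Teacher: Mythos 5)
First, note that the paper does not prove this statement: it is presented as an open conjecture, extracted from the numerical tables of $\hat{\Omega}^{\mathbb P^2}_d$ for $d\leq 16$, so there is no proof to compare your route against. Your proposal should therefore be judged on its own, and by your own admission it is not a proof. The preparatory steps are sound and genuinely useful: since $y^2+y+1$ is monic, palindromic, and is the minimal polynomial of $\zeta=e^{2\pi\sqrt{-1}/3}$, the conjecture is indeed equivalent to the single vanishing $\hat{\Omega}^{\mathbb P^2}_d(\zeta)=0$, and via Remark \ref{rmk:div+} this becomes $\mathcal F_d(\zeta)=-3\,\mathcal G_d(\zeta)$ with $\mathcal F_d(\zeta)=\sum_{g\geq 0}n_{g,d}3^g$. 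Your order-of-vanishing bookkeeping for the graph sum \eqref{eqn:sheaf/GW} at $y=\zeta$ also looks correct: for $S=\mathbb P^2$ every edge factor and every leg factor of $F^{E,\tw}$ acquires a simple zero there (by Corollary \ref{cor:rat}), every internal circle--circle edge therefore contributes a double zero against no pole, and only star-shaped trees whose square leaves carry labels divisible by $3$ survive, each pairing a double pole of $\W^{K_{\mathbb P^2}}_{d'}$ against two simple zeros. This is a legitimate, computable reformulation and arguably sharper than what the paper records.

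The genuine gap is the one you flag yourself: the resulting identity $\sum_{g\geq 0}n_{g,d}3^g=-3\,\mathcal G_d(\zeta)$, where $\mathcal G_d(\zeta)$ is pinned down recursively by the collapsed graph sum, is exactly as open at the end of your argument as the original conjecture was at the start. Everything before it is an equivalence, not a reduction to something known: $\mathcal G_d$ is defined through $\Omega_d$ and $\mathcal F_d$, so unless the value $\mathcal G_d(\zeta)$ extracted from the graph sum is matched against $\mathcal F_d(\zeta)$ by an independent argument, no progress has been made on the truth of the statement. Neither of your two suggested closing routes (quasi-modularity of the local $\mathbb P^2$ free energies at $\hbar=2\pi/3$, or an extra geometric $\Z/3$-structure on $M_{d,\chi}$) is carried out, and neither is known. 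So the proposal is a plausible and well-organized strategy document, but it does not establish the conjecture; it relocates the difficulty into one clean but unproven identity about the all-genus Gromov--Witten theory of $K_{\mathbb P^2}$ at a root of unity. If you want to extract something publishable from this, the unconditional content would be the explicit evaluation of the right-hand side of \eqref{eqn:crueqn} at $y=\zeta$ via Theorem \ref{thm:keythm}, stated as an equivalent form of the conjecture.
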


\newpage

\begin{table}[htb]  
\begin{center}   
\caption{List of $\hat{\Omega}^{\mathbb P^2}_d$}
\begin{tabular}{|c|c|}  
\hline $d$ & $\hat{\Omega}^{\mathbb P^2}_d$\\
\hline $1$ & \footnotesize $1$\\
\hline $2$ & \footnotesize $1$\\
\hline $3$ & \footnotesize $y^2+y+1$ \\
\hline $4$ & \footnotesize $y^6+y^5+4y^4+4y^3+4y^2+y+1$\\
\hline $5$ & \footnotesize 
$y^{12}+y^{11}+4y^{10}+7y^9+13y^8+19y^7+23y^6+19y^5+13y^4+7y^3+4y^2+y+1$\\
\hline $6$ &  \tabincell{c}{ \footnotesize 
$(y^2+y+1)(y^{18}+3 y^{16}+4 y^{15}+9 y^{14}+12 y^{13}+26 y^{12}+30 y^{11}+48 y^{10}$ \\ \footnotesize  $+ 50 y^9+48 y^8+30 y^7+26 y^6+12 y^5+9 y^4+4 y^3+3 y^2+1$) }\\
\hline $7$ & \tabincell{c}{ \footnotesize 
$y^{30}+y^{29}+4 y^{28}+7 y^{27}+16 y^{26}+28 y^{25}+53 y^{24}+86 y^{23}+146 y^{22}+225 y^{21}+342 y^{20}$ \\ \footnotesize  $+ 489 y^{19}+674 y^{18}+859 y^{17}+1018 y^{16}+1076 y^{15}+1018 y^{14}+859 y^{13}+674 y^{12}+489 y^{11}$ \\
\footnotesize  $+342 y^{10}+ 225 y^9+146 y^8+86 y^7+53 y^6+28 y^5+16 y^4+7 y^3+4 y^2+y+1$ }\\
\hline $8$ & \tabincell{c}{ \footnotesize 
$y^{42}+y^{41}+4 y^{40}+7 y^{39}+16 y^{38}+28 y^{37}+56 y^{36}+92 y^{35}+164 y^{34}+261 y^{33}+429 y^{32}+ 654 y^{31}$ \\ \footnotesize  $+1007 y^{30}+1463 y^{29}+2129 y^{28}+2947 y^{27}+4024 y^{26}+5236 y^{25}+6616 y^{24}+7856 y^{23}+8846 y^{22}$ \\
\footnotesize  $+9166 y^{21}+8846 y^{20}+7856 y^{19}+6616 y^{18}+5236 y^{17}+4024 y^{16}+2947 y^{15}+2129 y^{14}+1463 y^{13}$ \\
\footnotesize $+1007 y^{12}+ 654 y^{11}+429 y^{10}+261 y^9+164 y^8+92 y^7+56 y^6+28 y^5+16 y^4+7 y^3+4 y^2+y+1
$}\\
\hline $9$ & \tabincell{c}{ \footnotesize 
$(y^{2}+y+1)(y^{54}+3 y^{52}+4 y^{51}+9 y^{50}+15 y^{49}+32 y^{48}+48 y^{47}+90 y^{46}+141 y^{45}+234 y^{44} $ \\ \footnotesize  $+360 y^{43}+572 y^{42}+843 y^{41}+1275 y^{40}+1840 y^{39}+2652 y^{38}+3711 y^{37}+5159 y^{36}+6954 y^{35}$ \\
\footnotesize  $+9291 y^{34}+12073 y^{33}+15354 y^{32}+18948 y^{31}+22685 y^{30}+25965 y^{29}+28395 y^{28}+29271 y^{27}$ \\
\footnotesize  $+28395 y^{26}+25965 y^{25}+22685 y^{24}+18948 y^{23}+15354 y^{22}+12073 y^{21}+9291 y^{20}+6954 y^{19}$ \\
\footnotesize  $+5159 y^{18}+3711 y^{17}+2652 y^{16}+1840 y^{15}+1275 y^{14}+843 q^{13}+572 q^{12}+360 q^{11}$ \\
\footnotesize $+234 y^{10}+141 y^9+90 y^8+48 y^7+32 y^6+15 y^5+9 y^4+4 y^3+3 y^2+1)
$}\\
\hline $10$ & \tabincell{c}{ \footnotesize 
$y^{72}+y^{71}+4 y^{70}+7 y^{69}+16 y^{68}+28 y^{67}+56 y^{66}+95 y^{65}+173 y^{64}+285 y^{63}+483 y^{62}+771 y^{61} $ \\ \footnotesize  $+1247 y^{60}+1928 y^{59}+2996 y^{58}+4507 y^{57}+6763 y^{56}+9901 y^{55}+14423 y^{54}+20579 y^{53}+29168 y^{52}$ \\
\footnotesize  $+40605 y^{51}+56058 y^{50}+76158 y^{49}+102495 y^{48}+135818 y^{47}+178022 y^{46}+229643 y^{45}+292339 y^{44}$ \\
\footnotesize  $+365554 y^{43}+449335 y^{42}+540160 y^{41}+634381 y^{40}+723486 y^{39}+799099 y^{38}+849619 y^{37}$ \\
\footnotesize  $+867997 y^{36}+849619 y^{35}+799099 y^{34}+723486 y^{33}+634381 y^{32}+540160 y^{31}+449335 y^{30}$ \\
\footnotesize  $+365554 y^{29}+292339 y^{28}+229643 y^{27}+178022 y^{26}+135818 y^{25}+102495 y^{24}+76158 y^{23}$ \\
\footnotesize  $+56058 y^{22}+40605 y^{21}+29168 y^{20}+20579 y^{19}+14423 y^{18}+9901 y^{17}+6763 y^{16}+4507 y^{15}$ \\
\footnotesize  $+2996 y^{14}+1928 y^{13}+1247 y^{12}+771 y^{11}+483 y^{10}+285 y^9+173 y^8$ \\
\footnotesize $+95 y^7+56 y^6+28 y^5+16 y^4+7 y^3+4 y^2+y+1
$}\\
\hline
\end{tabular}
\end{center}   
\end{table}

\newpage

\appendix
\section{Higher range asymptotic formulas for (refined) Poincaré polynomials (by Miguel Moreira)}\label{sec:higherrange}
Throughout this appendix, we denote by $M_{d,\chi}$ the moduli space of 1-dimensional stable sheaves on $\BP^2$ with support of degree $d$ and Euler characteristic $\chi$, with the assumption that $\gcd(d,\chi)=1$. Our goals are:

\begin{itemize}
\item Explaining the relation between the formulas for the leading Betti numbers found in the present paper and the approach to calculate the cohomology rings of moduli spaces $M_{d, \chi}$ in \cite{PS23, klmp} (Corollary \ref{cor: relations}).
\item Proposing a conjecture strengthening Conjecture \ref{conj:2ndappro} by allowing a larger range (Conjecture \ref{conj: higherrange}).
\item Proposing a conjecture involving the refinements from the perverse/Chern filtration (Conjecture \ref{conj: higherrangerefined}).
\end{itemize}

\subsection{Generators and relations description of $H^\ast(M_{d,\chi})$} \label{subsection:geometry}
The natural generators of the cohomology are the tautological classes, obtained by taking Kunneth components of Chern classes of the universal bundle. More precisely, we define
\[c_k(j)=p_\ast\big(\ch_{k+1}(\BF)q^\ast H^j\big)\in H^{2(k+j-1)}(M_{d, \chi})\,\]
for $k\geq 0$ and $j=0,1,2$. Here, $p, q$ are the projections of $M_{d,\chi}\times \mathbb{P}^2$ onto $M_{d,\chi}$ and $\mathbb{P}^2$, respectively, and $\BF$ is a (rational, normalized) universal sheaf on the product $M_{d,\chi}\times \mathbb{P}^2$. The universal sheaf $\BF$ is normalized in a way that $c_1(1)=0$; we refer to \cite[Section 1.1]{PS23} for more details on the normalization. A classical argument by Beauville shows that tautological classes generate the cohomology:

\begin{prop}[\cite{PS23, B95}]
The classes $c_k(j)$ generate $H^\ast(M_{d,\chi})$ as a $\BQ$-algebra.
\end{prop}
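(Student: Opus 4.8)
The statement to prove is the Beauville-type generation result: the tautological classes $c_k(j)$ (Künneth components of $\mathrm{ch}_{k+1}(\BF)$ against $1, H, H^2$) generate $H^\ast(M_{d,\chi})$ as a $\BQ$-algebra. The plan is to run the classical argument of Beauville \cite{B95}, adapted to the one-dimensional sheaf setting as in \cite{PS23}. First I would recall that $M_{d,\chi}$ is a smooth projective variety carrying a universal sheaf $\BF$ on $M_{d,\chi}\times\PP^2$ (existence of $\BF$ uses $\gcd(d,\chi)=1$, which forces stability to equal semistability and makes the moduli space a fine moduli space up to the standard $\mathrm{Pic}$-twist ambiguity; the normalization $c_1(1)=0$ pins this down). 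The engine of the proof is the Künneth decomposition of the diagonal together with the Grothendieck--Riemann--Roch theorem applied to $\BF$.

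\emph{Key steps, in order.} (1) Consider the derived endomorphism complex $\bD\coloneqq Rp_{13\ast}\,R\mathcal{H}om(\pi_{12}^\ast\BF,\pi_{23}^\ast\BF)$ on $M_{d,\chi}\times M_{d,\chi}$, where $\pi_{ij}$ are the projections from $M_{d,\chi}\times\PP^2\times M_{d,\chi}$. Using stability and $\gcd(d,\chi)=1$, the fiber of $\bD$ over a point $([\F],[\F'])$ off the diagonal is acyclic in degree $0$ and $2$ and has $\Ext^1$ in the middle, while along the diagonal it computes the (shifted) tangent complex; more precisely, the class $[\bD]$ in $K$-theory, pushed to cohomology, is supported (after the Chern-character computation) in a way that its top Künneth piece recovers the class of the diagonal $\Delta\subset M_{d,\chi}\times M_{d,\chi}$ up to lower-order tautological corrections. (2) Apply GRR to compute $\ch(\bD)$ in terms of $\ch(\BF)$ on each factor and the Todd class of $\PP^2$; expanding the Künneth components, every term is a polynomial in the $c_k(j)$'s of the two factors. (3) Deduce that $[\Delta]\in H^\ast(M_{d,\chi}\times M_{d,\chi})$ lies in the subalgebra generated by $1\otimes(\text{taut})$ and $(\text{taut})\otimes 1$; equivalently $[\Delta]=\sum_a \alpha_a\otimes\beta_a$ with $\alpha_a,\beta_a$ tautological. (4) Conclude by the standard diagonal-correspondence argument: for any $\gamma\in H^\ast(M_{d,\chi})$, $\gamma=p_{2\ast}\big(p_1^\ast\gamma\cdot[\Delta]\big)=\sum_a\big(\int_{M_{d,\chi}}\gamma\cup\alpha_a\big)\beta_a$, exhibiting $\gamma$ as a $\BQ$-linear combination of tautological classes, hence in particular in the tautological subalgebra.

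\emph{Where the work concentrates.} The main obstacle is Step (1)--(2): one must show that the $K$-theory class of $\bD$, or more precisely a suitable modification of it built from $\BF$, has a Chern character whose \emph{top-degree Künneth component} is a nonzero multiple of the Poincaré-dual class generating $H^{\text{top}}$ of each fiber slice — i.e. that the ``leading'' term genuinely produces $[\Delta]$ rather than something degenerate. This is the heart of Beauville's trick and requires controlling that $\Ext^0$ and $\Ext^2$ between non-isomorphic stable sheaves vanish (automatic from stability and the fact that these are simple sheaves on a surface with the relevant slope) so that $\bD$ behaves like a vector bundle of the expected rank shifted appropriately off the diagonal; the normalization $c_1(1)=0$ is used to kill the ambiguity of twisting $\BF$ by a line bundle pulled back from $M_{d,\chi}$, which would otherwise obstruct the cohomology computation. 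Once this is in place, the GRR bookkeeping and the diagonal argument are routine. I would cite \cite[Section 1.1]{PS23} for the precise normalization and for the fine-moduli statement, and follow \cite{B95} for the diagonal argument verbatim.
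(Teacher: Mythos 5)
Your proposal is correct and is essentially the argument the paper invokes: the proposition is stated with a citation to Beauville's classical diagonal trick (vanishing of $\Hom$ and $\Ext^2$ between distinct stable sheaves of the same slope, so that the relative Ext complex is a rank-$(\dim M_{d,\chi}-1)$ bundle off the diagonal, whence its top Chern class is supported on and equals $[\Delta]$, which GRR expresses in terms of Künneth components of $\ch(\BF)$), followed by the standard correspondence argument $\gamma=p_{2\ast}(p_1^\ast\gamma\cdot[\Delta])$. The only imprecision is cosmetic: the leading term is exactly $[\Delta]$ (the multiple is fixed by the self-intersection $\Delta\cdot\Delta=\chi(M_{d,\chi})=\int c_{\mathrm{top}}(T_{M_{d,\chi}})$), with no lower-order corrections needed.
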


In other words, the natural algebra homomorphism from the free polynomial algebra
\begin{equation}
\label{eq: realization}
\BD\coloneqq \BQ[c_2(0), c_0(2), c_3(0), c_2(1), c_1(2), \ldots] \to H^\ast(M_{d, \chi})
\end{equation}
is surjective. The algebra $\BD$ is sometimes referred to as the descendent algebra. The descendent algebra has a natural cohomological grading making the homomorphism above respect gradings. Note that $\BD$ is infinite dimensional, but the subspaces $\BD^{2j}$ of degree $2j$ elements are finite dimensional, and we have\footnote{This is almost the same infinite product that appears in Theorem \ref{thm:leadingBetti}, except that the exponent in $1/(1-y)$ is 2 instead of 1. The difference is explained by the fact that we are not dividing by $\frac{1-y^{3d}}{1-y}$.}
\[H(y)\coloneqq \sum_{j\geq 0} y^j \dim \BD^{2j}= \prod_{k>0} \frac{1}{(1-y^k)^2(1-y^{k+1})}\,.\]
In \cite{PS23, klmp} the authors construct families of geometric relations among tautological classes in $H^\ast(M_{d,\chi})$. The most important relations are the so called generalized Mumford relations. They come from the following simple observation \cite[Proposition 2.6]{klmp}: if $F, F'$ are semistable sheaves with topological types $(d, \chi)$ and $(d', \chi')$ satisfying 
\begin{equation}\label{eq: ineqslopes}
\frac{\chi'}{d'}<\frac{\chi}{d}<\frac{\chi'}{d'}+3
\end{equation}
then $\Hom(F, F')=\Ext^2(F, F')=0$. Hence, $\Ext^1(F, F')$ has constant dimension $dd'$, and therefore there is a vector bundle $\CV$ of rank $dd'$ on the product $M_{d,\chi}\times M_{d',\chi'}$ whose fiber over $(F, F')$ is $\Ext^1(F, F')$. The generalized Mumford relations are obtained from the vanishing of  Chern classes beyond the rank of $\CV$:
\[c_k(\CV)=0 \textup{ in }H^\ast(M_{d,\chi}\times M_{d',\chi'})\textup{ for }k>dd'\,.\]
We get relations
\[\GMR_{d, \chi}^{d', \chi', k, A}\in \ker\big(\BD\to H^\ast(M_{d,\chi})\big)\]
for each $d, \chi, d', \chi'$ satisfying \eqref{eq: ineqslopes}, $k>dd'$ and $A\in H^\ast(M_{d',\chi'})$ by integrating $c_k(\CV)$ along the Poincaré dual of $A$. The generalized Mumford relation above has cohomological degree
\[2\big(k+\deg(A)-((d')^2+1)\big)\geq 2d'(d-d')\]
with equality when $A$ is the unit class and $k=dd'+1$. 

There are 2 other families of geometric relations introduced in \cite{klmp}, but they only appear in cohomological degree quadratic in $d$. These families of geometric relations do not necessarily generate the ideal of relations -- for example, in $M_{5,1}$ there are three relations in degrees 36, 38 which are not in the ideal of geometric relations.

\subsubsection{The $H^{< 2(d-1)}$ range} The first generalized Mumford relations that appear are when $d'=1$ or $d'=d-1$. The cohomological degree of $\GMR$ relations in either case starts at $2(d-1)$. Indeed, there are no relations among generators below such degree since, by the result of Yuan \cite{Y23},
\[\dim H^{2j}(M_{d,\chi})=[y^j]H(y)=\dim \BD^{2j}\]
for $j\leq d-2$.

\subsubsection{The $H^{< 2(2d-4)}$ range} In the range between $2(d-1)$ and $2(2d-4)$ we have the effect of generalized Mumford relations, but only with\footnote{A priori, $\GMR$ with $d'=d-1$ could also play a role. However, it turns out that such relations are already contained in the ideal of $\GMR$ with $d'=1$ in the range considered.} $d'=1$. When we take $\chi=1$ and $d'=1$, the inequality \eqref{eq: ineqslopes} is equivalent to $\chi'\in \{0, -1, -2\}$. Moreover, $M_{1, \chi'}$ is isomorphic to $\BP^2$ (or, more canonically, it is isomorphic to the dual of the original $\BP^2$), so $A$ is taken from $\{h^i\colon i\in \{0,1,2\}\}$ where $h\in H^2(M_{1, \chi'})$ is the generator. 

If we assemble into a generating series the degrees of all the possible generalized Mumford relations with fixed $d, \chi, d'=1$ and $k, \chi', A$ varying through the possibilities discussed, we get
\[\sum_{k>d}\sum_{\chi'=-2}^0\sum_{i=0}^2 y^{k-2+i}=3y^{d-1}\frac{(1+y+y^2)}{1-y}\,.\]

This is exactly the rational function appearing in Theorem \ref{thm:leadingBetti}. The following proposition has been obtained in joint work of M.M. with  Lim and Pi:

\begin{prop}[Lim--Pi--Moreira]
The generalized Mumford relations with $d'=1$ are completely independent in $\BD^{\leq 2(2d-4)}$. More precisely, if $j\leq 2d-4$ then the elements
\[D\cdot \GMR_{d, \chi}^{d'=1, \chi', k, h^i}\in \BD^{2j}\]
are linearly independent, where $\chi', k, i$ run through the possibilities discussed above and $D$ runs through monomials in tautological classes with $\deg(D)+2(k-2+i)=2j$.
\end{prop}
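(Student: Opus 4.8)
The plan is to make the generalized Mumford relations with $d'=1$ completely explicit via Grothendieck--Riemann--Roch and then run a leading-term argument inside the free algebra $\BD$. Write the $\Ext^1$-bundle $\CV=\CV_{\chi'}$ on $M_{d,\chi}\times M_{1,\chi'}$ as the $K$-theoretic pushforward $-\mathbf{R}p_\ast\,\mathbf{R}\mathcal{H}om(\BF,\BF')$ along $p\colon M_{d,\chi}\times\BP^2\times M_{1,\chi'}\to M_{d,\chi}\times M_{1,\chi'}$, where $\BF,\BF'$ are (pulled back from) the universal sheaves, and where $M_{1,\chi'}\cong\check{\BP}^2$ and $\BF'$ is a twist of $\mathcal O_{\mathcal L}$ for the incidence divisor $\mathcal L\subset\BP^2\times\check{\BP}^2$ of class $H+h$. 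Grothendieck--Riemann--Roch then gives
\[
\ch(\CV_{\chi'})\;=\;d\;-\;p_\ast\!\Big(\ch(\BF)^\vee\cdot\big(1-e^{-(H+h)}\big)\,e^{(\chi'-1)H}\cdot\mathrm{td}(T_{\BP^2})\Big).
\]
Because $\BF$ enters linearly, $\ch(\CV_{\chi'})$ is an affine-linear form in the tautological classes $c_k(j)$ with coefficients in $\BQ[h]/(h^3)$ — explicitly computable, and polynomial of degree at most $2$ in $\chi'$ through the factor $e^{(\chi'-1)H}$. Consequently $c(\CV_{\chi'})=\exp(L_{\chi'})$ with $L_{\chi'}=\sum_{m\ge1}(-1)^{m-1}(m-1)!\,\ch_m(\CV_{\chi'})$ again affine-linear, and the relation $\GMR_{d,\chi}^{d'=1,\chi',k,h^i}$ is, up to a nonzero scalar and the natural map $\BD\to H^\ast(M_{d,\chi})$, the $h^{2-i}$-Künneth component $C^{\chi'}_{k,i}\in\BD^{2(k+i-2)}$ of $c_k(\CV_{\chi'})$.

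The dependence on $\chi'$ can be handled cleanly. For fixed $(k,i)$ the three relations $C^{\chi'}_{k,i}$, $\chi'\in\{0,-1,-2\}$, span over $\BQ$ the same subspace of $\BD$ as the three finite differences $C^{0}_{k,i}$, $C^{0}_{k,i}-C^{-1}_{k,i}$ and $C^{0}_{k,i}-2C^{-1}_{k,i}+C^{-2}_{k,i}$, which are themselves explicit elements of $\BD$. So it is enough to prove that the family $\{D\cdot C'\}$, where $C'$ runs over these three ``$\chi'$-reduced'' relations for all $i\in\{0,1,2\}$ and all $k>d$, and $D$ runs over monomials in the $c_k(j)$ with $\deg D+\deg C'=2j$, is linearly independent in $\BD^{2j}$ whenever $j\le 2d-4$.

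For the core of the argument I would set up a leading-term (Gröbner-style) computation, by induction on the cohomological degree $2j$ and, within each degree, on the $h$-index $i$. The point is that as $k$ grows $c_k(\CV_{\chi'})$ brings in genuinely new tautological generators; the relevant input is the expansion of $\big(1-e^{-(H+h)}\big)e^{(\chi'-1)H}\mathrm{td}(T_{\BP^2})$, where one finds for instance that, modulo $h$, the coefficient of $H^0$ vanishes, of $H^1$ equals $1$, and of $H^2$ equals $\chi'$. From this one checks that the linear-in-generators part of $C^{\chi'}_{k,0}$, as $\chi'$ ranges over $\{0,-1,-2\}$, spans the whole three-dimensional space $\langle c_{k-1}(0),c_{k-2}(1),c_{k-3}(2)\rangle$ — a $3\times3$ determinant that must be evaluated. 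Using the ``$i=0$'' relations to eliminate the linear parts of the $i=1,2$ relations, the leading monomial of each reduced relation becomes a product of a lowest-degree generator, $c_0(2)$ or $c_2(0)$, with a high generator from the corresponding Chern class; one then verifies that these leading monomials are pairwise distinct and distinct from $D\cdot(\text{leading monomial of a lower-degree relation})$, which drives the induction. The cutoff $j\le 2d-4$ is exactly the range where one may ignore the generalized Mumford relations with $d'\ge2$ (whose cohomological degree starts at $2d'(d-d')=2(2d-4)$ for $d'=2$) as well as products of two $d'=1$ relations; beyond this degree those would contribute and destroy the clean triangular picture.

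The \emph{main obstacle} is the combinatorial bookkeeping in this last step. All $C^{\chi'}_{k,i}$ of a given degree have linear parts in the same three-dimensional space, so one genuinely cannot separate the (up to nine) relations of that degree at linear order: the quadratic and higher parts of the $\chi'$-reduced relations must be shown to be independent enough, and to stay independent after multiplication by arbitrary monomials $D$. Concretely this means producing a weighted monomial order for which $\mathrm{LM}(D\cdot C')=D\cdot\mathrm{LM}(C')$ and the assignment $(D,C')\mapsto D\cdot\mathrm{LM}(C')$ is injective on the relevant finite set, which amounts to a finite list of explicit non-vanishing statements about the Chern-class coefficients of $\CV_{\chi'}$ beyond the first determinant above. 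Identifying exactly which generators appear in the degree-two part of $\exp(\ell_{\chi'})$ — hence in the quadratic parts of the $C^{\chi'}_{k,i}$ — and checking that the induced ``second-order'' separation of the three $h$-indices and the three $\chi'$-reduced pieces has full rank throughout the range $j\le 2d-4$ is where the real work is.
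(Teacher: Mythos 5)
First, a caveat about the comparison: the paper does not actually prove this proposition --- it is imported from joint work of Moreira with Lim and Pi and stated without proof in the appendix --- so your proposal can only be judged on its own terms, not against an argument in the text.

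On its own terms, your setup is sound and several of the explicit claims check out: $\ch(\CV_{\chi'})$ is indeed affine-linear in the $c_k(j)$ by Grothendieck--Riemann--Roch, integrating against $h^i$ does extract the $h^{2-i}$-component of $c_k(\CV_{\chi'})$, and your expansion of $\bigl(1-e^{-(H+h)}\bigr)e^{(\chi'-1)H}\mathrm{td}(T_{\BP^2})$ modulo $h$ (coefficients $0$, $1$, $\chi'$ of $H^0,H^1,H^2$) is correct. Your explanation of why the bound $j\leq 2d-4$ matters is also the right one: since $\deg D\leq 2(d-3)$ in this range, no monomial $D$ can contain a generator of degree comparable to that of a relation, which rules out the collisions $D\cdot\mathrm{LM}(C'_1)=D'\cdot\mathrm{LM}(C'_2)$ that would otherwise arise between relations with linear versus quadratic leading terms.

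The genuine gap is that the decisive step --- the one that \emph{is} the proposition --- is only described, not carried out. In each cohomological degree $2m$ with $m\geq d+1$ there are up to nine relations $C^{\chi'}_{k,i}$, while the space of linear terms available to them, $\langle c_{m+1}(0),c_m(1),c_{m-1}(2)\rangle$, is only three-dimensional; so six of the nine must be separated by their quadratic (or higher) parts after Gaussian elimination of the linear parts. You correctly identify this, but you neither evaluate the $3\times 3$ determinant that would show the three $\chi'$-reduced $i=0$ relations already exhaust the linear span, nor exhibit a term order and leading monomials for the remaining six, nor verify the injectivity of $(D,C')\mapsto D\cdot\mathrm{LM}(C')$ on the full indexing set. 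You explicitly defer all of this to ``a finite list of explicit non-vanishing statements'' and say it ``is where the real work is.'' Since the proposition is precisely the assertion that these finitely many rank conditions hold uniformly in $d$ throughout the range $j\leq 2d-4$, a proof that postpones them has not proved the statement; what you have is a plausible and well-informed strategy, not a proof.
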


A consequence of the asymptotic formula of Theorem \ref{thm:leadingBetti} is then the following:

\begin{cor}\label{cor: relations}
For $j< 2d-10$ (or, assuming the better bound in Remark \ref{rmk:betterbd}, $j<2d-4$) the ideal of relations 
\[\ker\big(\BD^{2j}\to H^{2j}(M_{d, \chi})\big)\]
is (freely) generated by the generalized Mumford relations with $d'=1$. 
\end{cor}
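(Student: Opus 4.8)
The plan is to prove the corollary by a Hilbert series comparison along a chain of graded surjections, using Theorem \ref{thm:leadingBetti} and the Lim--Pi--Moreira proposition as the two substantive inputs. Let $I\subseteq\BD$ be the ideal generated by the relations $\GMR_{d,\chi}^{d'=1,\chi',k,h^i}$ with $\chi'\in\{0,-1,-2\}$, $k>d$ and $i\in\{0,1,2\}$. Since each such relation lies in $\ker\big(\BD\to H^\ast(M_{d,\chi})\big)$ by construction, there are graded surjections of $\BQ$-algebras
\[\BD\;\twoheadrightarrow\;\BD/I\;\twoheadrightarrow\;H^\ast(M_{d,\chi})\,.\]
As all graded pieces are finite dimensional, it suffices to establish $\dim(\BD/I)^{2j}=\dim H^{2j}(M_{d,\chi})$ for $j<2d-10$ (resp.\ for $j<2d-4$, assuming the improved bound of Remark \ref{rmk:betterbd}): the second surjection is then a degreewise isomorphism in that range, so $\ker\big(\BD^{2j}\to H^{2j}(M_{d,\chi})\big)=I^{2j}$, and the freeness of the generating set is precisely the content of the Lim--Pi--Moreira proposition.

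First I would compute the Hilbert series of $\BD/I$ in low degrees. For each admissible triple $(\chi',k,i)$ place a generator $e_{\chi',k,i}$ in cohomological degree $2(k-2+i)$; the free $\BD$-module on these generators surjects onto $I$ via $e_{\chi',k,i}\mapsto \GMR_{d,\chi}^{d'=1,\chi',k,h^i}$, and in each degree $I$ is spanned by the monomial multiples $D\cdot\GMR_{d,\chi}^{d'=1,\chi',k,h^i}$. The Lim--Pi--Moreira proposition asserts exactly that these multiples are linearly independent for $j\leq 2d-4$, i.e.\ that this surjection of $\BD$-modules is an isomorphism in cohomological degrees $\leq 2(2d-4)$. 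Hence in that range
\[\sum_{j}\dim I^{2j}\,y^{j}\;=\;H(y)\cdot\sum_{k>d}\sum_{\chi'=-2}^{0}\sum_{i=0}^{2}y^{k-2+i}\;=\;H(y)\cdot 3\,y^{d-1}\,\frac{1+y+y^{2}}{1-y}\,,\]
using $H(y)=\sum_{j}y^{j}\dim\BD^{2j}$ together with the generating-series computation of the degrees of the $d'=1$ relations recorded in Section \ref{subsection:geometry}. Therefore $\sum_{j}\dim(\BD/I)^{2j}\,y^{j}$ agrees with $H(y)\big(1-3\,y^{d-1}\frac{1+y+y^{2}}{1-y}\big)$ for all $j\leq 2d-4$.

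Next I would match this against $\dim H^{2j}(M_{d,\chi})$. By $\chi$-independence of the Betti numbers we may take $\chi=1$, so $\dim H^{2j}(M_{d,\chi})=[y^{j}]P_{d}(y)$. Since $P_{d}(y)=\hat{\Omega}^{\mathbb P^2}_{d}\cdot\frac{1-y^{3d}}{1-y}$ and $\frac{1-y^{3d}}{1-y}$ agrees with $\frac{1}{1-y}$ in degrees $<3d$, while $\frac{1}{1-y}\prod_{k>0}\frac{1}{(1-y^{k})(1-y^{k+1})^{2}}=H(y)$ (the identity pointed out in the footnote of Section \ref{subsection:geometry}), Theorem \ref{thm:leadingBetti} yields
\[\sum_{j}\dim H^{2j}(M_{d,\chi})\,y^{j}\;=\;H(y)\Big(1-3\,y^{d-1}\,\frac{1+y+y^{2}}{1-y}\Big)\]
for all $j\leq 2d-11$. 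Comparing the two displays over the common range $j\leq 2d-11$ — which is contained in $j\leq 2d-4$, so that the independence of the relations applies there — gives $\dim(\BD/I)^{2j}=\dim H^{2j}(M_{d,\chi})$ for $j<2d-10$, proving the corollary; the conditional statement follows verbatim using the truncation $y^{\leq 2d-5}$ from Remark \ref{rmk:betterbd}.

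The hard work having been done in Theorem \ref{thm:leadingBetti} and in the Lim--Pi--Moreira proposition, the only thing left to watch is bookkeeping: one must keep track of the two elementary conversions — between $\hat{\Omega}^{\mathbb P^2}_{d}$ and the true Poincar\'e series $P_{d}$ (harmless because $2d-10<3d$), and between the infinite product appearing in Theorem \ref{thm:leadingBetti} and the descendent Hilbert series $H(y)$ — and one must verify the purely numerical inclusion of the range $2(2d-11)$ delivered by Theorem \ref{thm:leadingBetti} into the range $2(2d-4)$ in which the $d'=1$ generalized Mumford relations are known to be independent. No genuinely new ideas are required beyond these two inputs.
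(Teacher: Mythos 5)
Your argument is correct and is exactly the (implicit) proof the paper intends: compare the Hilbert series of $\BD/I$, computed via the Lim--Pi--Moreira independence of the $d'=1$ generalized Mumford relations for $j\leq 2d-4$, with the Betti numbers from Theorem \ref{thm:leadingBetti} translated into $P_d(y)=H(y)\big(1-3y^{d-1}\tfrac{1+y+y^2}{1-y}\big)$ modulo $y^{2d-10}$, and conclude by the dimension count along the two surjections. The bookkeeping conversions ($\hat{\Omega}^{\mathbb P^2}_d$ versus $P_d$, and the infinite product versus $H(y)$) are handled correctly.
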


\subsubsection{Higher range}
When extending from $2(2d-4)$ to $2(3d-9)$, as in the proposed formula in Conjecture \ref{conj:2ndappro}, two things happen. First, $\GMR$ relations with $d'=2$ play a role; secondly, there are redundancies in the relations. While this makes it hard to give such a simple explanation for the formula as in the $<2(2d-4)$ range, let us rewrite Conjecture~\ref{conj:2ndappro} in a form that is more natural.

This rewriting is inspired by the recursion for the Betti numbers of the moduli spaces of stable bundles on curves in \cite{HN, AB}, which can be explained via the Harder--Narasimhan stratification of the stack of all (not necessarily stable) vector bundles. Indeed, this stratification is closely related to generalized Mumford relations, and provides a way to study their redundancies, see for example \cite[Proposition 3.3]{lmpcurves}.

Let $\widetilde P_{d,\chi}(y)$ be the Poincaré series of the stack $\FM_{d, \chi}$ of semistable sheaves, i.e.
\[\widetilde P_{d, \chi}(y)\coloneqq \sum_{j\geq 0}y^j \dim H^{2j}(\FM_{d,\chi})\in \BZ[\![y]\!]\,.\]
When $\gcd(d, \chi)=1$, $\FM_{d, \chi}\simeq M_{d, \chi}\times B\BC^\times$ and 
\[\widetilde P_{d, \chi}(y)=\frac{1}{1-y}P_d(y)\,.\]
In general, $\widetilde P_{d, \chi}$ depends only on $d$ and $\gcd(d,\chi)$, and it can be recovered from knowing $P_{d/k}(y)$ for every $k$ dividing $\gcd(d, \chi)$ by a plethystic exponential type formula, see \cite[Corollary 4.9]{klmp}. For example,
\begin{align*}\widetilde P_{2,0}(y)&=\frac{P_2(y)}{1-y}+\frac{1}{2}y\left(\frac{P_1(y)}{1-y}\right)^2-\frac{1}{2}y\frac{P_1(y^2)}{1-y^2}\,\\
&=\frac{(1 + y + y^2)(1 + y^2 + y^3 + y^4 - y^5)}{(1-y)(1-y^2)}\,.
\end{align*}
The rewriting of Conjecture \ref{conj:2ndappro} depends on the following observation:
\[3f(y)=-3\widetilde P_{2,1}-3\widetilde P_{2,0}+6y\widetilde P_{1,0}^2\,.\]
In this formula, it is natural to interpret the term $3\widetilde P_{2,1}$ as coming from $\GMR$ with $d'=2$ and $\chi'=-1, -3, -5$ and $3\widetilde P_{2,0}$ coming from\footnote{Generalized Mumford relations can also be defined when $\gcd(d', \chi')\neq 1$, as long as one works with the stack of semistables.} $\GMR$ with $d'=2$ and $\chi'=0, -2, -4$. The term $6y\widetilde P_{1,0}^2$ might be interpreted as being related to redundancies among the relations. Then, Conjecture \ref{conj:2ndappro} is equivalent to the recursion
\begin{align}\label{eq: recursionrange3}P_d&+3y^{d-1}\widetilde P_{1,0}P_{d-1}+3 y^{2d-4} \widetilde P_{2,1}P_{d-2}\\
&+3 y^{2d-4}\widetilde P_{2,0}P_{d-2}+3 y^{2d-3}\widetilde P_{1,0}^2P_{d-2}=H(y) \mod y^{3d-9}\,. \nonumber
\end{align}
This way of rewriting Conjecture \ref{conj:2ndappro} actually leads us to propose a more general conjecture. To state it, let $\mathsf{HN}_k$ be the set of ``Harder--Narasimhan types'', consisting of pairs $(\boldsymbol d, \boldsymbol \chi)$ where $\boldsymbol d=(d_1, \ldots, d_m)\in \BZ_{>0}^m$ and $\boldsymbol \chi=(\chi_1, \ldots, \chi_m)\in \BZ^m$ satisfy
\[d_1+ \ldots+ d_m\leq k\textup{ and }0\leq \frac{\chi_1}{d_1}<\ldots <\frac{\chi_m}{d_m}<3\,.\]
Set $d_0=d-\sum_{i=1}^m d_i$ and let
\[s(\boldsymbol d)\coloneqq \sum_{0\leq i < j \leq m}d_i d_j\,.\]

\begin{conj}\label{conj: higherrange}
Let $k\geq 0$. For $d> k+1$ we have
\[\sum_{(\bf{d}, \boldsymbol{\chi})\in\mathsf{HN}_k}y^{s(\boldsymbol d)}P_{d_0}\widetilde P_{d_1, \chi_1}\ldots \widetilde P_{d_m, \chi_m}=H(y) \mod y^{(k+1)(d-k-1)}\]
where the sum runs over the set of ``Harder--Narasimhan types''. 
\end{conj}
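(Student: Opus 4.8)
The plan is to prove Conjecture~\ref{conj: higherrange} by a Harder--Narasimhan recursion, in the spirit of the Atiyah--Bott/Harder--Narasimhan computation of the cohomology of moduli of bundles on curves that motivates the reformulation \eqref{eq: recursionrange3}. Concretely, one seeks a smooth ambient space (or stack) $\mathcal A_d^{(k)}$---a ``rigidified'' moduli of $1$-dimensional sheaves, e.g.\ a moduli of pairs, or a moduli of sheaves carrying a distinguished stable quotient of coprime type, truncated so that the unstable part has degree at most $k$---with two properties: (i) its cohomology is freely generated by tautological (descendent) classes in a range of degrees, and (ii) it carries a finite locally closed stratification indexed by the Harder--Narasimhan types $(\boldsymbol d,\boldsymbol\chi)\in\mathsf{HN}_k$, the stratum of type $(\boldsymbol d,\boldsymbol\chi)$ fibering onto $M_{d_0,\chi_0}\times\prod_{i=1}^m\FM_{d_i,\chi_i}$, where $d_0=d-\sum_i d_i$.

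The geometry of the strata is controlled by the Ext-vanishing recalled in the appendix (\cite[Proposition~2.6]{klmp}): two semistable sheaves of degrees $a,a'$ whose Gieseker slopes differ by less than $3$ satisfy $\Hom=\Ext^2=0$ and hence $\dim\Ext^1=aa'$. Applying this to all pairs of Harder--Narasimhan subquotients---which forces every slope to lie in a half-open window of length $3$, hence the bound $\chi_i/d_i<3$ in the definition of $\mathsf{HN}_k$---one finds that each stratum is, étale-locally over its base, an affine bundle of rank $\sum_{0\le i<j\le m}d_id_j=s(\boldsymbol d)$, with no correction from automorphisms (the relevant $\Hom$'s vanish) and with the distinguished factor $M_{d_0,\chi_0}$ contributing $P_{d_0}(y)$ because it is rigidified. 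Since $M_{d_0,\chi_0}$ is smooth projective and each $\FM_{d_i,\chi_i}$ has pure cohomology with Poincaré series $\widetilde P_{d_i,\chi_i}$, purity propagates through the affine bundles, and the stratification yields
\[
\sum_{j\ge 0}y^j\dim H^{2j}\!\big(\mathcal A_d^{(k)}\big)=\sum_{(\boldsymbol d,\boldsymbol\chi)\in\mathsf{HN}_k}y^{s(\boldsymbol d)}\,P_{d_0}(y)\prod_{i=1}^m\widetilde P_{d_i,\chi_i}(y).
\]
Enlarging the unstable part beyond degree $k$ adds only strata of codimension at least $(k+1)(d-k-1)$---the minimum being attained by $\boldsymbol d=(1,\dots,1)$ with $k{+}1$ ones and $d_0=d-k-1$---so the right-hand side is unchanged modulo $y^{(k+1)(d-k-1)}$ if the truncation is removed, and by property (i) it equals $H(y)=\sum_j y^j\dim\BD^{2j}$ modulo $y^{(k+1)(d-k-1)}$.

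The hard part is property (i): showing that the tautological classes generate $H^\ast(\mathcal A_d^{(k)})$ \emph{freely} in degrees $<2(k+1)(d-k-1)$. This is a far-reaching strengthening of the Lim--Pi--Moreira independence statement quoted in the appendix (which treats only the $d'=1$ generalized Mumford relations, in the range $\le 2(2d-4)$): it amounts to showing that the generalized Mumford relations, together with all of their higher syzygies, assemble into a bar/Koszul-type resolution of the full ideal of relations in that range---equivalently, that no ``exotic'' relation, such as the genuinely non-geometric relations found in $M_{5,1}$ in degree $36$, appears below degree $2(k+1)(d-k-1)$. The Harder--Narasimhan stratification is expected to furnish exactly such a resolution, but making this precise is the heart of the conjecture and the main obstacle. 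A secondary, more routine point is pinning down the ambient space $\mathcal A_d^{(k)}$ and the bookkeeping that matches Harder--Narasimhan types with slopes in a unit window to the increasing-slope normalization $0\le\chi_1/d_1<\dots<\chi_m/d_m<3$ of $\mathsf{HN}_k$, using the $\chi$-independence of \cite{MS23,Y23} and window-sliding twists by $\mathcal O(1)$.

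An alternative route bypasses $\mathcal A_d^{(k)}$ altogether. Since the refined sheaves/Gromov--Witten correspondence holds for $\BP^2$, one may start from \eqref{eqn:sheaf/GW} and push the degree analysis of Section~\ref{sec:pfCGKT}: by Lemma~\ref{lem:degbd}, rooted trees whose total $F^{E,\tw}$-weight exceeds $k$ contribute only in $y$-degree $>(k+1)(d-k-1)$, so the range $j<(k+1)(d-k-1)$ is governed by finitely many tree shapes; identifying their combined contribution with the $\mathsf{HN}_k$ sum and feeding in a closed formula for the leading Gopakumar--Vafa invariants of local $\BP^2$ to all orders---only the first two orders are provided by Theorem~\ref{thm:gvbd}---would again give the result. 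The needed input, an all-order leading-GV formula (presumably via the topological vertex, as in \cite{GZ15}), is the obstacle on this side.
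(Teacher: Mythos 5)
The statement you are proving is Conjecture~\ref{conj: higherrange}, which the paper leaves open: no proof is given there (only the $k=1$ case is established as Theorem~\ref{thm:leadingBetti}, in a weaker range, and the $k=2$ case is itself Conjecture~\ref{conj:2ndappro}). So there is no argument in the paper to compare against, and your proposal must stand on its own as a proof. It does not: both routes you describe have their decisive step missing, and you say so yourself. That honesty is to your credit, but it means this is a strategy memo, not a proof.

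Concretely, in the first route the entire content of the conjecture is hidden in your property (i). You never construct $\mathcal A_d^{(k)}$, and the assertion that its cohomology is \emph{freely} generated by tautological classes below degree $2(k+1)(d-k-1)$ is, via the stratification identity you write down, essentially equivalent to the conjecture being proved --- the argument is circular unless free generation is established by independent means. The appendix's own evidence cuts both ways here: the Lim--Pi--Moreira independence result covers only the $d'=1$ relations up to degree $2(2d-4)$, and the existence of non-geometric relations in $M_{5,1}$ shows that completeness of the generalized Mumford relations is delicate; controlling all higher syzygies in the required range is exactly the open problem. There is also a genuine error in the codimension bookkeeping: among types with $\sum_{i\geq 1}d_i=k+1$ the minimum of $s(\boldsymbol d)$ is $(d-k-1)(k+1)$, attained at the single part $\boldsymbol d=(k+1)$, not at $(1,\dots,1)$ (which gives $s=(d-k-1)(k+1)+\binom{k+1}{2}$); worse, types with large unstable degree and small $d_0$ (e.g.\ $m=1$, $d_0=1$, giving $s(\boldsymbol d)=d-1$) have $s(\boldsymbol d)$ far below $(k+1)(d-k-1)$, so the claim that all dropped strata lie beyond the cutoff is false for the naive enumeration of Harder--Narasimhan types and can only be rescued once $\mathcal A_d^{(k)}$ is actually specified. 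In the second route, the input you need --- an all-order closed formula for the leading Gopakumar--Vafa invariants of $K_{\mathbb{P}^2}$ extending Theorem~\ref{thm:gvbd}, together with an identification of the corresponding tree contributions with the $\mathsf{HN}_k$ sum --- is not available and is again the substantive open problem. In short: the two attack routes are the right ones and are consistent with how the paper frames the conjecture, but no step of either has been carried out beyond what the paper already proves.
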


The conjecture implies the existence of rational functions $f_0, f_1, \ldots$ with the property that
\[P_d(y)=H(y)\left(\sum_{j=0}^k y^{j(d-j)}f_j(y)\right) \mod y^{(k+1)(d-k-1)}\,.\]
for every $d>k+1$. The rational functions $f_k$ only have poles at roots of unity. Indeed, it follows from \cite[Theorem 4.11]{klmp} that the denominator of $f_k$ is $(1-y)\ldots (1-y^k)$. The rational function $f_k$ can explicitly determined from $P_1, \ldots, P_k$. The first ones $f_0, f_1, f_2$ are already implicit in Conjecture \ref{conj:2ndappro}. The next one is
\begin{align*}f_3(y)=\frac{-3}{(1-y)(1-y^2)(1-y^3)}&\big(9 + 18 y - 44 y^3 - 82 y^4 - 37 y^5 + 56 y^6 + 143 y^7 
+ 170 y^8 \\&+ 
   164 y^9 + 125 y^{10} + 89 y^{11} + 55 y^{12} + 36 y^{13} + 18 y^{14} + 
   9 y^{15}\big)\,.
\end{align*}

Conjecture \ref{conj: higherrange} suggests that the generalized Mumford relations are complete at least up to cohomological degree $2\lfloor d^2/4\rfloor-1$.
\subsection{Perverse/Chern refinement}

One of the reasons why the study of the cohomology of $M_{d, \chi}$ is interesting is that it gives direct access to Gopakumar--Vafa invariants of $K_{\BP^2}$, according to a proposal by Maulik--Toda \cite{MT18}. They propose the definition of such invariants in terms of the perverse filtration
\[P_0H^\ast(M_{d, \chi})\subseteq  P_1H^\ast(M_{d, \chi})\subseteq P_2H^\ast(M_{d, \chi})\subseteq \ldots\]
on $M_{d,\chi}$ associated to the Hilbert--Chow morphism
\[M_{d,\chi}\to |dH|\]
sending a 1-dimensional sheaf to its fitting support. The perverse filtration determines the refined Poincaré polynomial
\[P^{\mathsf{ref}}_d(q,t)\coloneqq \sum_{i, j\geq 0}q^i t^j \dim \gr_j^P H^{i+j}(M_{d,\chi})\,,\]
and Maulik--Toda propose the definition of $n_{g,d}^{\mathsf{MT}}$ in terms of the $t=1$ specialization:
\[P^{\mathsf{ref}}_d(q,1)=\sum_{g\geq 0}n_{g,d}^{\mathsf{MT}}(q^{1/2}-q^{-1/2})^{2g}\,.\]
The Gromov--Witten/Gopakumar--Vafa correspondence is the conjecture that these agree with $n_{g,d}$ defined via the Gromov-Witten theory of $K_{\BP^2}$, or equivalently $P^{\mathsf{ref}}_d(q,1)=\mathcal F_d(q)$. From this point of view, the all-genus local/relative correspondence in the present paper can be phrased as a surprising relation between the specializations $q=t=y^{1/2}$ (which recovers $P_d(y)$) and $q=y, t=1$ of $P^{\mathsf{ref}}_d$.

It has been conjectured in \cite{KPS23, klmp} that the perverse filtration matches another natural filtration called the Chern filtration $C_\bullet H^\ast(M_{d,\chi})$, which is defined in terms of tautological classes:
\[C_k H^\ast(M_{d,\chi})\coloneqq \textup{span}\{c_{k_1}(j_1)\ldots c_{k_m}(j_m)\colon k_1+\ldots+k_m\leq k\}\,.\]
Note that the perverse filtration can be defined in $\BD$ exactly in the same way, and we let
\[H^{\mathsf{ref}}(q,t)\coloneqq \sum_{i, j\geq 0}q^i t^j \dim \gr_j^P \BD^{i+j}=\prod_{k>0}\frac{1}{(1-q^{k-1}t^{k+1})(1-q^{k+1}t^{k-1})(1-q^{k+1}t^{k+1})}\,.\]
The $P=C$ conjecture has been shown \cite{msy, PSSZ24} to hold in the free range of the cohomology, i.e.
\[P_\bullet H^{\leq 2d-4}(M_{d,\chi})=C_\bullet H^{\leq 2d-4}(M_{d,\chi})\,.\]
This equality implies the asymptotic formula
\[P^{\mathsf{ref}}_d(q,t)=H^{\mathsf{ref}}(q,t) \mod (q,t)^{2d-2}\,.\]
From the point of view of the Chern filtration and the asymptotic formulas in the present paper, it becomes very natural to ask if there are refinements of the higher asymptotic formulas discussed here for the $q=t=y^{1/2}$ specialization. Numerical evidence\footnote{M.M. thanks Y. Kononov and W. Pi for sharing the (conjectural) data for the refined Poincaré polynomials, calculated via Nekrasov partition functions as in \cite[Section 3.4]{KPS23}.} suggests that this is indeed the case. To state our refined conjecture, one defines the refined Poincaré polynomials $\widetilde{P}^{\mathsf{ref}}_{d,\chi}$ of the stacks, which can still be calculated from $P^{\mathsf{ref}}_{d}$ with a plethystic exponential type formula \cite[Remark 6.11]{klmp}.
For example, 
\begin{align*}\widetilde P^{\mathsf{ref}}_{2,0}(q,t)&=\frac{P^{\mathsf{ref}}_2(q,t)}{1-qt}+\frac{1}{2}qt\bigg(\frac{P^{\mathsf{ref}}_1(q,t)}{1-qt}\bigg)^2-\frac{1}{2}qt\frac{P^{\mathsf{ref}}_1(q^2, t^2)}{1-q^2t^2}\,\\
&=\frac{(1+t^2+t^4)(1 + q t^3 + t^6 + q^2 t^6 - q^2 t^8)}{(1-qt)(1-q^2t^2)}\,.
\end{align*}
 We refer the reader to \cite{davison} for details on the definition of the perverse filtration on the stacks $\FM_{d,\chi}$; the plethystic formula is a consequence of \cite[(56)]{davison}.
 
Given $(\boldsymbol d, \boldsymbol \chi)\in \mathsf{HN}_k$, we let
\[s_\pm(\boldsymbol d, \boldsymbol \chi)\coloneq s(\boldsymbol d)\pm \sum_{i=1}^m \chi_i\,.\]

 \begin{conj}\label{conj: higherrangerefined}
Let $k\geq 0$. For $d> k+1$ we have
\[\sum_{(\boldsymbol{d}, \boldsymbol{\chi})\in\mathsf{HN}_k}q^{s_+(\boldsymbol d, \boldsymbol \chi)}t^{s_-(\boldsymbol d, \boldsymbol \chi)}P_{d_0}^{\mathsf{ref}}\widetilde P_{d_1, \chi_1}^{\mathsf{ref}}\ldots \widetilde P_{d_m, \chi_m}^{\mathsf{ref}}=H^{\mathsf{ref}}(q,t) \mod (q,t)^{2(k+1)(d-k-1)}\]
where the sum runs over the set of ``Harder-Narasimhan types''. 
\end{conj}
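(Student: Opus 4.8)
\emph{Sketch of a possible approach.} The natural strategy would be to realize Conjecture~\ref{conj: higherrangerefined} as the perverse refinement of a Harder--Narasimhan recursion on the stack of \emph{all} one-dimensional sheaves, and then to identify the leading term of that recursion with the refined Hilbert series $H^{\mathsf{ref}}(q,t)$ of the descendent algebra $\BD$. Let $\mathsf{Coh}_{d,\chi}$ denote the (non-finite-type) moduli stack of one-dimensional sheaves on $\BP^2$ with support of degree $d$ and Euler characteristic $\chi$, with its support morphism to $|dH|$ and the induced perverse filtration on $H^\ast(\mathsf{Coh}_{d,\chi})$; the cohomology of such a stack and the compatibility of its perverse filtration with stratifications should be handled in the cohomological-integrality/BPS formalism of Davison \cite{davison}, which already underlies the stack-theoretic input of \cite{klmp}. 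The first step is to stratify $\mathsf{Coh}_{d,\chi}$ by Harder--Narasimhan type relative to the slope window $0\le \chi_1/d_1<\cdots<\chi_m/d_m<3$, so that the stratum attached to $(\boldsymbol d,\boldsymbol\chi)\in\mathsf{HN}_k$ --- with $d_0=d-\sum_{i\ge1}d_i$ the ``main block'' --- has codimension $s(\boldsymbol d)=\sum_{0\le i<j\le m}d_id_j$ and is, up to affine bundles, fibred over the honest moduli space of the main block and the stacks $\FM_{d_i,\chi_i}$ of the window blocks; the off-diagonal $\Ext^1$-groups between Jordan--Hölder blocks have constant rank $d_id_j$ precisely because the window has length $3$, the value of $-K_{\BP^2}$ on a line, which forces $\Hom=\Ext^2=0$ exactly as in \eqref{eq: ineqslopes}.

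The second step computes each stratum's contribution to $\gr^P_\bullet H^\ast(\mathsf{Coh}_{d,\chi})$. Since the fibres are affine bundles, the perverse-filtered cohomology of a stratum is, up to one bidegree shift, the tensor product of the perverse-filtered cohomologies of the main block and the window blocks; after the plethystic passage from sheaves to stacks (as in \cite[Remark~6.11]{klmp}) this produces precisely the product $P^{\mathsf{ref}}_{d_0}\,\widetilde P^{\mathsf{ref}}_{d_1,\chi_1}\cdots\widetilde P^{\mathsf{ref}}_{d_m,\chi_m}$. \emph{The crucial and genuinely hard point} is to pin down the shift along the normal (extension) directions \emph{in the perverse filtration}: its cohomological degree is the evident $2s(\boldsymbol d)$, but its perverse degree has to come out to be $s_-(\boldsymbol d,\boldsymbol\chi)=s(\boldsymbol d)-\sum_{i\ge1}\chi_i$, so that the shift monomial is $q^{s_+(\boldsymbol d,\boldsymbol\chi)}t^{s_-(\boldsymbol d,\boldsymbol\chi)}$. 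Morally the extension classes carry a nontrivial support/BPS weight governed by the $\chi_i$; making this precise --- by analysing the commutation of the support map of $\mathsf{Coh}_{d,\chi}$ with those of the blocks over the stratum, or via a cohomological-Hall-algebra description of the perverse filtration in the spirit of \cite{davison} --- is where I expect the principal obstacle to lie. Granting it, summing over all types in $\mathsf{HN}_k$ and discarding the strata of codimension $\ge(k+1)(d-k-1)$ assembles the left-hand side of the conjecture; the specialization $q=t=y^{1/2}$ collapses $q^{s_+}t^{s_-}$ to $y^{s(\boldsymbol d)}$ and recovers Conjecture~\ref{conj: higherrange} from the same argument.

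The final step identifies the leading term: the open (trivial-type) stratum contributes $P^{\mathsf{ref}}_d(q,t)$, and one must show that $\gr^P_\bullet H^\ast(\mathsf{Coh}_{d,\chi})$ itself equals $H^{\mathsf{ref}}(q,t)$ in cohomological degree below $2(k+1)(d-k-1)$. This requires, in that range, (i) a stack-level Beauville-type statement that the tautological classes generate $H^\ast(\mathsf{Coh}_{d,\chi})$ \emph{freely}, equivalently that the stack-version generalized Mumford relations exhaust and freely generate the relation ideal up to cohomological degree $\sim 2\lfloor d^2/4\rfloor-1$; and (ii) that the perverse filtration agrees with the Chern filtration on this cohomology --- the $P=C$ conjecture of \cite{KPS23,klmp} --- in the same range, which is at present known only in the free range $\le 2d-4$ by \cite{msy,PSSZ24}. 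In short, Conjecture~\ref{conj: higherrangerefined} is essentially equivalent to the completeness and freeness, with the predicted perverse/Chern bidegrees, of the \emph{refined} generalized Mumford relations up to degree $\sim 2\lfloor d^2/4\rfloor-1$. Accordingly, the realistic output of this plan is not an unconditional proof but a reduction to that completeness statement: it gives the conjecture unconditionally for $k=0$ (this is precisely the asymptotic formula $P^{\mathsf{ref}}_d(q,t)=H^{\mathsf{ref}}(q,t)\mod(q,t)^{2d-2}$ recalled above, a consequence of \cite{msy,PSSZ24}) and reduces the cases $k\ge1$ to the refined analogue of Corollary~\ref{cor: relations} in the appropriate range.
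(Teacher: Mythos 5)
This statement is Conjecture \ref{conj: higherrangerefined}, which the appendix presents as an open conjecture supported only by numerical evidence (refined Poincar\'e polynomials computed via Nekrasov partition functions) and by the heuristic analogy with Harder--Narasimhan recursions for bundles on curves; the paper contains no proof, so there is nothing to compare your argument against line by line. Your sketch is, in spirit, exactly the motivation the appendix itself offers for Conjectures \ref{conj: higherrange} and \ref{conj: higherrangerefined}: an HN stratification of the stack of all one-dimensional sheaves with slope window of length $3$, codimension $s(\boldsymbol d)$ strata fibred over $M_{d_0,\chi_0}$ and the stacks $\FM_{d_i,\chi_i}$, and a leading term governed by the freeness of the descendent algebra together with $P=C$. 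Your identification of the $k=0$ case with the known asymptotic $P^{\mathsf{ref}}_d=H^{\mathsf{ref}} \bmod (q,t)^{2d-2}$ is correct, and your reduction of $k\geq 1$ to the completeness and freeness of the refined generalized Mumford relations in the range up to roughly $2\lfloor d^2/4\rfloor-1$ is a fair account of what would be needed.

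That said, this cannot be graded as a correct proof: it is a conditional reduction, and you say so yourself. The two genuinely open points you flag are real and are precisely where the difficulty lies. First, the perverse bidegree of the normal-direction shift --- that it is $q^{s_+(\boldsymbol d,\boldsymbol\chi)}t^{s_-(\boldsymbol d,\boldsymbol\chi)}$ rather than merely of total degree $2s(\boldsymbol d)$ --- is not established anywhere; the appendix infers the exponents $s_\pm$ from numerical data, not from a geometric computation of how the support map interacts with the stratification. Second, the leading-term identification requires $P=C$ and freeness of $\BD\to H^\ast$ well beyond the range $\leq 2d-4$ where they are currently known, and the appendix explicitly notes (e.g.\ for $M_{5,1}$) that the geometric relations do not always generate the full relation ideal, so the completeness statement you need is itself delicate. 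In short: the strategy is consistent with the paper's own heuristics and correctly locates the obstacles, but the conjecture remains unproved by this proposal.
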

 

As in the unrefined case, the conjecture is equivalent to asymptotic formulas of the form
\[P_d^{\mathsf{ref}}(q,t)=H^{\mathsf{ref}}(q,t)\left(\sum_{j=0}^k q^{j(d-j)}t^{j(d-j-3)+1}f_j^{\mathsf{ref}}(q,t)\right) \mod (q,t)^{2(k+1)(d-k-1)}\,\]
for every $d>k+1$, where $f_0^{\mathsf{ref}}=1, f_1^{\mathsf{ref}},f_2^{\mathsf{ref}}, \ldots$ are rational functions in $q,t$. The rational function $f_k^{\mathsf{ref}}$ can be explicitly calculated from $P^{\mathsf{ref}}_1, \ldots, P^{\mathsf{ref}}_k$. For example,
\begin{align*}
f_1^{\mathsf{ref}}(q,t)&=-\frac{(q^2 + q t + t^2) (1 + t^2 + t^4)}{1-qt}\\
f_2^{\mathsf{ref}}(q,t)&=-\frac{(q^2 + q t + t^2) (1 + t^2 + t^4)}{1-qt}\big(q^3 + t^3- q^3 t^2   - q t^4- q^5 t^2 - q^3 t^4 - 2 q^2 t^5\\
&\quad  - 
 q^4 t^5 
 - q^2 t^7 - q t^8 - q^3 t^8 - q^5 t^8 + t^9 - q^2 t^{11}\big)\,.
\end{align*}

\begin{rmk}
Theorem \ref{thm:main0} does not have a refined version. For example, the polynomials $P_d^{\mathsf{ref}}(q,t)$ are irreducible for $d=3,6,9,12$. On the other hand, they are divisible by $t^4+t^2+1$ for $d\leq 12$ not a multiple 3 (and the quotient is irreducible for $d\neq 1,2$). It is natural to speculate that this divisibility holds in general.
\end{rmk}

\bibliographystyle{amsxport}
\bibliography{universal-BIB}

\end{document}